\documentclass[reqno,11pt,oneside]{amsart}
\usepackage{amsmath, amssymb, amsthm, amsfonts} 
\usepackage[utf8]{inputenc}
\usepackage[english]{babel}
\usepackage[T1]{fontenc}
\usepackage{amsmath,amsfonts,amssymb,latexsym,amsthm,dsfont,amsbsy,stmaryrd}
\usepackage{tikz}
\usepackage{mathrsfs}

\usepackage{xcolor,graphicx}

\usepackage{enumitem}

\usepackage{mathtools}
\mathtoolsset{showonlyrefs}

\definecolor{myblue}{RGB}{0,51,157}
\usepackage{hyperref}
\hypersetup{colorlinks=true,linkcolor=myblue,citecolor=blue!50,urlcolor=green} 
\usepackage{url}

\usepackage{fancyhdr}

\usepackage{geometry}

\renewcommand{\theequation}{\arabic{section}.\arabic{equation}}
\let \ssection=\section
\renewcommand{\section}{\setcounter{equation}{0}\ssection}

\allowdisplaybreaks
\renewcommand{\leq}{\leqslant}
\renewcommand{\geq}{\geqslant}

\newtheorem{thm}{Theorem}[section]
\newtheorem{corollary}[thm]{Corollary}
\newtheorem{proposition}[thm]{Proposition}

\newtheorem{lemma}[thm]{Lemma}

\newtheorem{exple}[thm]{Example}
\newtheorem{remark}[thm]{Remark}
\newtheorem{assumption}{Assumption}

\newcommand{\ind}{\mathds{1}}
\newcommand{\var}{\mathrm{Var}}
\newcommand{\cov}{\mathrm{Cov}}

\newcommand{\dE}{\mathbb{E}}

\newcommand{\dP}{\mathbb{P}}
\newcommand{\dR}{\mathbb{R}}

\newcommand{\dZZ}{\mathscr{Z}}

\newcommand{\cF}{\mathcal{F}}
\newcommand{\cG}{\mathcal{G}}

\newcommand{\bM}{\mathscr M}

\newcommand{\rA}{{\mathrm A}}
\newcommand{\rB}{{\mathrm B}}
\newcommand{\rc}{{\mathrm c}}
\newcommand{\rd}{{\mathrm d}}
\newcommand{\re}{{\mathrm e}}
\newcommand{\ri}{{\mathrm i}}

\newcommand{\ABS}[1]{{{\left| #1 \right|}}} 
\newcommand{\BRA}[1]{{{\left\{#1\right\}}}} 
\newcommand{\PAR}[1]{{{\left(#1\right)}}} 
\newcommand{\SBRA}[1]{{{\left[#1\right]}}} 

\title[Insurance Risk Models in a heterogeneous time-dependent population]{Insurance risk models in a heterogeneous time-dependent population: scaling limits and ruin probabilities}
 
\author{Hélène Guérin}
\address{D\'epartement de math\'ematiques, Universit\'e du Qu\'ebec \`a Montr\'eal (UQAM), 201 av.\ Pr\'esident-Kennedy, Montr\'eal (Qu\'ebec) H2X 3Y7, Canada}
\curraddr{}
\email{guerin.helene@uqam.ca, renaud.jf@uqam.ca, bricezotsa@gmail.com}
\thanks{}

\author{Michel Mandjes}
\address{Mathematical Institute, Leiden University, Einsteinweg 55, 2333 CC Leiden, the Netherlands}
\curraddr{}
\email{m.r.h.mandjes@math.leidenuniv.nl}
\thanks{}

\author{Jean-François Renaud}

\author{Arsene Brice Zotsa Ngoufack}

\thanks{Funding in support of this work was provided by  Discovery Grants (RGPIN-2025-05758, RGPIN-2020-07239) from the Natural Sciences and Engineering Research Council of Canada (NSERC). ABZN acknowledges funding from the SCOR Foundation for Science}
\thanks{This work was supported by the Research Institute for Mathematical Sciences,
an International Joint Usage/Research Center located in Kyoto University.}
\date{\today}

\begin{document}

\maketitle

\begin{abstract}
Epidemic dynamics introduce time-varying heterogeneity into insured populations, as individuals’ risk profiles depend on their evolving health status, thereby challenging classical insurance models based on homogeneity. Motivated by this phenomenon, we develop in this paper actuarial risk models in which the claim process is directly linked to an underlying general stochastic population dynamics, resulting in interacting subpopulations with different claim frequencies and severities. We propose both collective and individual modeling frameworks that track the changing composition of the insured population and its impact on aggregate claims. For these models, we derive scaling limits and provide bounds and approximations for ruin probabilities, offering tractable tools for risk assessment. Applications in an SIS context are detailed for different classes of risk processes, thereby contributing to a better understanding of how contagion-driven changes in population structure affect insurer solvency and supporting more realistic modeling of insurance portfolios in the presence of epidemic risk.
\end{abstract}

\medskip 

\noindent \textit{MSC2020 subject classifications:} 91B05, 91G05, 60G51, 60K37, 92D30 

\medskip

\noindent \textit{Key words:} 
Insurance risk models, Lévy processes, stochastic population dynamics, scaling limits, ruin probabilities.

\section{Introduction}

Epidemics introduce a structured and dynamic form of heterogeneity into insured populations, as individuals’ risk profiles depend on their health status which evolves over time. In particular, the coexistence of infected and non-infected subpopulations gives rise to substantially different claim arrival intensities, claim size distributions, and temporal dependence structures. This is a challenge for classical insurance risk models, which typically rely on homogeneity assumptions and independence across insured individuals.

Motivated by recent pandemics and the increasing interaction between epidemiological dynamics and insurance outcomes, this paper investigates insurance risk models in which the claim process is coupled with an underlying population dynamics. While epidemic models provide a primary motivation and illustrate our main examples, the mathematical framework developed here applies more generally to population-based dynamics with state-dependent claim mechanisms. By embedding such population dynamics into actuarial risk frameworks, we aim to capture how transitions between population states affect the aggregate claims process and, ultimately, insurer solvency. Our analysis focuses on two key aspects: scaling limits for the resulting risk processes and the derivation of bounds and asymptotic approximations for the corresponding ruin probabilities.

\medskip
 
{\it Multiclass population.} In our setup, an insurance portfolio consisting of a population is partitioned into sub-populations. 
For analytical tractability, we restrict our attention to two sub-populations, whose associated reserve processes are consistently indexed by $\rA$ and $\rB$; this restriction is imposed solely for simplicity, as the framework extends naturally to any finite number of sub-populations. We assume each subpopulation is homogeneous in terms of its risk characteristics. As mentioned above, the two sub-populations may consist of \textit{infectious} and \textit{susceptible} individuals in an epidemiological setting, but they may also represent risk-seeking and risk-averse individuals, employed and unemployed individuals, insureds with or without a particular disease, or individuals with or without a healthy lifestyle, among many other possibilities.

A key element in our framework is the modeling of the (stochastically) evolving number of individuals (or risks) in each sub-population. For that purpose, let the stochastic process $F=(F(t))_{t\geq 0}$ describe the fraction of risks of type $\rA$. Clearly, the random variable $1-F(t)$ is then the fraction of risks of type $\rB$ at time $t$. If the size of the whole population is known and equal to an integer $N$, then we can consider a stochastic process $I=(I(t))_{t \geq 0}$ where $I(t)$ is the number of individuals of type $\rA$ at time $t$. In this case, we have $F(t)={I(t)}/{N}$. The dynamics of these processes will be specified in greater detail below.

In this paper, we define two types of models: a {\it collective model} based on the fraction of risks $F$ that are of class $\rA$, and an {\it individual model} based on the fraction of individuals $I/N$ that are in class $\rA$.

\medskip

{\it Stochastic objects.}
We proceed by introducing the model primitives through a collection of stochastic objects; this framework will be maintained throughout the article.
\begin{itemize}
\item[$\circ$]
We fix a probability space $\PAR{\Omega, \cF, \dP}$ on which are defined two Lévy processes $X=\PAR{X(t)}_{t\geq 0}$ and $Y=\PAR{Y(t)}_{t\geq 0}$, associated to class $\rA$ and class $\rB$ respectively, and a stochastic process $F=(F(t))_{t\geq 0}$ taking values in $[0,1]$ and having \textit{càdlàg} sample paths, describing the fraction of risks of type $\rA$. 
Let $\cF=\PAR{\cF_t}_{t \geq 0}$ be the filtration generated by those three processes.
\item[$\circ$] We assume $X,Y,F$ are independent. 
\item[$\circ$] For a finite population of size $N\in\mathbb{N}$, define $I=\PAR{I(t)}_{t \geq 0}$ by $I(t):=NF(t)$. Clearly, this process is taking values in $\{0,1,\ldots,N\}$.
\end{itemize}

Note that, apart from being independent of the risk processes $X$ and $Y$ and having càdlàg trajectories, no additional assumptions are imposed on the process $F$ (and thus on $I$). In particular, we do not assume that $F$ is Markovian.

Henceforth, let $\varphi_X$ and $\varphi_Y$ be the characteristic exponents of $X$ and $Y$, i.e.,
    \[
    \dE\SBRA{\re^{i\alpha X(t)}}=\re^{\varphi_X(\alpha)t}\quad \text{and} \quad \dE\SBRA{\re^{i\alpha Y(t)}}=\re^{\varphi_Y(\alpha)t}.
    \]
 It is well known, by the {\it L\'evy-Khinchine formula} \cite[Theorem 1.3]{Kyprianou2014}, that these characteristic exponents are of the form: for $\alpha \in \mathbb{R}$,
\[
\varphi_X(\alpha) = \rc_X i\alpha - \frac{\sigma_X^2}{2} \alpha^2 - \int_{\dR} \left(1 - e^{i \alpha x} + i \alpha x \ind_{\ABS{x}<1} \right) \nu_X(\rd x) ,
\]
with $(\rc_X, \sigma_X, \nu_X)$ the {\it L\'evy triple} associated with $\varphi_X$. 
Here, $\rc_X \in \dR$ and $\sigma_X \geq 0$, and $\nu_X$ is the Lévy measure of $X$, i.e., a sigma-finite measure on $\dR \setminus \{0\}$ such that $\int_{\dR} \PAR{1 \wedge x^2} \nu_X(\rd x) < \infty$. Note that if $X$ has a finite second moment, then $\varphi_X'(0)=i\,\dE[X(1)]$ and $\varphi_X''(0)=-\var(X(1))$. Similarly, the Lévy triple associated with $\varphi_Y$ is given by $(\rc_Y, \sigma_Y, \nu_Y)$.

We finish our account of the model primitives by a notational comment. Throughout this paper, for a process $Z=\PAR{Z(t)}_{t\geq 0}$, we denote by $\cF^Z$ the $\sigma$-algebra generated by $\PAR{Z(t)}_{t\geq 0}$. Likewise, for a given $t \in [0,\infty)$, we denote by $\cF^Z_t$ the $\sigma$-algebra generated by $\PAR{Z(s)}_{s\in [0,t]}$.

\medskip

{\it Related literature.}
Assessing the likelihood of ruin and its potential impact has long been a fundamental topic in the classical mathematical theory of insurance. For a thorough overview of the subject, the reader may consult \cite{Asmussen-Albrecher10}, while additional comprehensive treatments can be found in \cite{Dickson2017,Schmidli2017}. 

In the literature, the surplus process is often modeled using a classical Cramér–Lundberg process, in which claims arrive according to a Poisson process and claim sizes are subtracted from accumulated premiums. In a more general setting, the surplus process can be a spectrally negative Lévy process (SNLP), while a \textit{dual process} can be a spectrally positive Lévy process. In the former case, the negative jumps of the SNLP correspond to individual claims, while the continuous part captures more regular and frequent fluctuations in the reserve. In many applications, particularly when modeling the aggregate risk of a large number of relatively small policyholders, the surplus process is further approximated by a Brownian motion, which can be interpreted as a diffusion limit of many small and frequent claims.

Over the past decades, insurance models with underlying dynamics corresponding to epidemic models have received increasing interest. Without attempting to give a full account of the area, two branches of the literature are particularly worth mentioning in the context of the present paper:
\begin{itemize}
    \item[$\circ$] First, there are papers that rely on the framework of \emph{Markov additive processes} \cite{Lefevre-Simon-SIR21,Lefevre-Simon-SIS22}. In their main example, the number of infected individuals (in a population of fixed size) is modeled by a stochastic SIS model, which can be represented as a continuous-time Markov chain of birth–death type. The claim arrival processes of infected and non-infected individuals differ and are typically chosen to be of L\'evy type. This naturally places the model within the class of MAP-driven ruin models \cite[Chapter VII]{Asmussen-Albrecher10}; see also the general treatment in \cite{vanKreveldMandjesDorsman2022} and the corresponding tail asymptotics in the light-tailed regime discussed in \cite{vanKreveldMandjesDorsman2024}. It is also noted that there is a direct connection with a dual queueing model; see, e.g., the analysis in \cite{DiekerMandjes2011}.
    \item[$\circ$] 
    Second, there are papers that explicitly model insurance risk processes driven by two intrinsically different sub-populations. In \cite{Li-Garrido05}, the focus is on a two-class model with $Z_t:=u+\rc t+X_t+Y_t $,
where $X$ and $Y$ are, respectively, a compound Poisson process and a compound Sparre-Andersen process with Erlang(2) claim inter-arrival times; $u$ denotes the initial reserve level and $\rc$ the constant premium rate. The ruin probability is studied in this setting. In \cite{Ji-Zhang10}, the focus is on the substantially more general setup where $X$ and $Y$ are taken to be independent compound renewal processes.
\end{itemize}

A book on epidemic models in insurance \cite{Pandemic-Insurance-book21} has been recently published, in which we recover the model introduced in \cite{Feng-Garrido11} and the approach of \cite{Lefevre-Simon-SIR21,Lefevre-Simon-SIS22}. Importantly, the two models we propose in the present paper have the MAP-driven ruin model, discussed above, as a special case. 

\medskip

{\it Contributions.} The main contributions of this work can be summarized as follows.
\begin{itemize}
\item[$\circ$] We introduce two types of models, referred to as the {\it collective model} and the {\it individual model}, designed to describe risk processes in a setting where the number of individuals in each sub-population evolves stochastically over time. These general classes of models allow to explicitly incorporate the effects of population dynamics when assessing the risk of insurance portfolios.

\item[$\circ$] We study scaling limits for both models by establishing a law of large numbers and a central limit theorem for the long-time behavior of these processes. This analysis provides insight into the macroscopic behavior of the aggregate risk process induced by the population dynamics.

\item[$\circ$] We derive bounds and approximations for the ruin probability, providing practical tools for actuarial risk assessment in populations with dynamically changing sub-populations.

\item[$\circ$] Finally, we illustrate the proposed framework through several examples involving SIS epidemic dynamics and different classes of risk processes.
\end{itemize}

\medskip

{\it Organization.} The paper is organized as follows. In Section~\ref{sec:Two risk models}, we define our collective and individual models. Section~\ref{sec: long time behavior} describes the long-time behavior of each model through laws of large numbers and central limit theorems, whereas Section~\ref{sec:ruin probability} focuses on ruin probabilities and tail asymptotics. Finally, in Section~\ref{sec:SIS}, we specialize our models using  SIS epidemic dynamics.

\section{Two risk models}\label{sec:Two risk models}

In this paper, in the spirit of classical actuarial risk theory, we introduce two new risk models: a \textit{collective model} based on $(X,Y,F)$ and an \textit{individual model} based on $(X,Y,I)$. We assume throughout that all models considered are defined as specified in the itemized list describing the \emph{stochastic objects} in the introduction.

\subsection{A collective model}\label{sec:introduction collective model}

First, we define a \textit{collective model}. To this end, let $X$ (resp.\ $Y$) denote the surplus process of an insurance portfolio with risks of type $\rA$ (resp.\ type $\rB$). Since the fraction of risks of type $\rA$ at time $t$ is given by $F(t)$, the aggregated surplus risk process $V=\PAR{V(t)}_{t \geq 0}$ is given, for each $t \geq 0$, by
\begin{equation}\label{eq:def-V}
V(t) :=  \int_{0+}^t F(s-) \, \rd X(s) + \int_{0+}^t \PAR{1-F(s-)} \, \rd Y(s) .
\end{equation}
The processes $F$ and $1-F$ act as dynamic weights between the two classes of risks. In particular, if $F$ is constant, i.e., if $F(t)=F_0$ for all $t \geq 0$ for some constant $F_0 \in [0,1]$, then $V(t) =   F_0 X(t)  + \PAR{1-F_0}  Y(t)$ so that $V$ is a (static) convex combination of the processes $X$ and $Y$.

\begin{lemma}\label{lem:characteristic function V}
The characteristic function of $V$ is given, for $t\geq 0$ and $\alpha \in \dR$, by
\begin{equation}\label{eq:characteristic function V}
\dE\SBRA{\re^{i \alpha \,{V(t)}}} = \dE\SBRA{\exp \left({\int_0^t \varphi_X \PAR{\alpha F(s)} \, \rd s + \int_0^t \varphi_Y \PAR{\alpha (1-F(s))} \,\rd s} \right)} .
\end{equation}
\end{lemma}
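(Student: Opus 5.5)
The plan is to condition on the whole path of $F$ and use the independence of $X$, $Y$ and $F$. Conditionally on $\cF^F$, the integrands $s\mapsto F(s-)$ and $s\mapsto 1-F(s-)$ become deterministic, bounded, left-continuous functions with values in $[0,1]$. For a deterministic bounded Borel function $f$ and a Lévy process $X$ with exponent $\varphi_X$, the key identity to establish is
\[
\dE\SBRA{\exp\PAR{i\alpha\int_{0+}^t f(s-)\,\rd X(s)}}=\exp\PAR{\int_0^t \varphi_X\PAR{\alpha f(s)}\,\rd s}.
\]
Once this holds, conditional independence of $X$ and $Y$ given $\cF^F$ factorizes the conditional characteristic function of $V(t)$ into a product of two such exponentials. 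Taking expectations over $F$ then yields the stated formula.

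One point to note: $f(s-)$ may be replaced by $f(s)$ inside $\int_0^t\varphi_X(\alpha f(s))\,\rd s$. A càdlàg path has at most countably many jumps, so the two functions agree Lebesgue-a.e.

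To prove the key identity, I would first take a step function $f=\sum_k a_k\ind_{(t_{k-1},t_k]}$. Then the stochastic integral equals $\sum_k a_k\PAR{X(t_k)-X(t_{k-1})}$. Independence and stationarity of increments give the product $\prod_k \re^{\varphi_X(\alpha a_k)(t_k-t_{k-1})}$, which is exactly the claimed exponential.

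For a general càdlàg $f$ with values in $[0,1]$, I would approximate $f(s-)$ by left-point step functions $f_n$ on refining partitions. These converge pointwise to $f(s-)$ and are uniformly bounded by $1$.

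The main obstacle is passing to the limit on both sides.
\begin{itemize}
\item On the right side, $\varphi_X$ is continuous and bounded on compacts, so dominated convergence gives $\int_0^t\varphi_X(\alpha f_n)\,\rd s\to\int_0^t\varphi_X(\alpha f(s))\,\rd s$.
\item On the left side, I need $\int f_n\,\rd X\to\int f(s-)\,\rd X$ in probability. Bounded convergence in probability still needs a semimartingale decomposition of $X$, so I would decompose $X$ via the Lévy–Itô decomposition into four pieces: a drift, a Brownian part, compensated small jumps (an $L^2$ martingale) and large jumps (finitely many on $[0,t]$).
  \begin{itemize}
  \item The drift and large-jump pieces converge pathwise by dominated convergence.
  \item The martingale pieces converge in $L^2$ by the Itô isometry, since $\int_0^t|f_n-f(s-)|^2\,\rd s\to0$ and $\int_0^t\int_{|x|<1}|f_n-f(s-)|^2x^2\,\nu_X(\rd x)\,\rd s\to0$.
  \end{itemize}
  Convergence in probability, together with boundedness of $\re^{i\alpha\cdot}$, then gives convergence of the characteristic functions.
\end{itemize}
Alternatively, this step is the standard dominated convergence theorem for stochastic integrals of semimartingales.

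It remains to make the conditioning rigorous. I would need that the stochastic integral computed with the path of $F$ frozen coincides a.s. with $V(t)$. The cleanest route is to realize $(X,Y)$ and $F$ on a product space, which is legitimate by independence. The pathwise-defined step-function approximations are jointly measurable, so Fubini applies at the level of the approximants. Passing to the limit in probability then transfers the identity to $V(t)$.
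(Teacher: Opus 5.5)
Your proposal is correct and follows essentially the paper's route. You approximate both integrals by left-point Riemann sums, use independence to compute the characteristic function of the approximants conditionally on $\cF^F$ as a product of exponentials, and pass to the limit via continuity of $\varphi_X,\varphi_Y$ and dominated convergence. The one difference is that you justify the convergence in probability of the Riemann sums to the stochastic integrals yourself (Lévy--Itô decomposition with the Itô isometry, or dominated convergence for stochastic integrals), whereas the paper simply cites Protter's Theorem II.21 for this step.
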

\begin{proof}
Fix $t\geq 0$ and $\alpha \in \dR$. Let $n \in {\mathbb N}$ and define $t_k=\frac{k}{n}t$ for each $k\in\BRA{0,\ldots,n}$. Since $F$ is a bounded càdlàg process, by \cite[Theorem II.21]{protter04}, we have
\begin{align*}
\int_{0+}^t F(s-) \, \rd X(s) = \lim_{n \to \infty} \sum_{k=0}^{n-1} F(t_k) \PAR{X(t_{k+1})-X(t_k)} ,
\end{align*}
where the limit is \textit{uniformly on compacts in probability}. As a consequence, we have
\[
\dE \SBRA{\re^{i\alpha\int_{0+}^t F(s-)\rd X(s)}} = \lim_{n \to \infty} \dE \SBRA{\exp \left(i \alpha \sum_{k=0}^{n-1} F(t_k) \PAR{X(t_{k+1})-X(t_k)} \right)}.
\]
Since $X$ is a Lévy process, the increments $\BRA{X(t_{k+1})-X(t_k) , 1 \leq k \leq n}$ are independent and all distributed as the random variable $X(t/n)$. Of course, similar statements can be made about $Y$.

Using the independence between $X$, $Y$ and $F$, we can write
\begin{align*}
&\hspace{-0.7cm}\dE\SBRA{\exp\PAR{i\alpha\sum_{k=0}^{n-1}F(t_k)\PAR{X(t_{k+1})-X(t_k)}+i\alpha\sum_{k=0}^{n-1}\PAR{1-F(t_k)}\PAR{Y(t_{k+1})-Y(t_k)}}\,\Big\vert \,\cF^F_t}\\
=\:&\prod_{k=0}^{n-1}\dE\SBRA{\exp\left({i\alpha F(t_k)\PAR{X(t_{k+1})-X(t_k)}}\right)\,\Big\vert \,\cF^F_t} \\
& \qquad \times \prod_{k=0}^{n-1}\dE\SBRA{\exp\left({i\alpha\PAR{1-F(t_k)}\PAR{Y(t_{k+1})-Y(t_k)}}\right)\,\Big\vert\, \cF^F_t}\\
=\:&\prod_{k=0}^{n-1}\exp\left({\varphi_X\PAR{\alpha F(t_k)}\PAR{t_{k+1}-t_k}}\right)
\,\prod_{k=0}^{n-1}\exp\left({\varphi_Y\PAR{\alpha\PAR{1-F(t_k)}}\PAR{t_{k+1}-t_k}}\right)\\
=\:&\exp\PAR{\sum_{k=0}^{n-1}\varphi_X\PAR{\alpha F(t_k)}\PAR{t_{k+1}-t_k}
+\sum_{k=0}^{n-1}\varphi_Y\PAR{\alpha\PAR{1-F(t_k)}}\PAR{t_{k+1}-t_k}}.
\end{align*}

Since $\varphi_X$ and $\varphi_Y$ are continuous functions, and since $F$ is càdlàg, then we have, almost surely,
\[
\lim_{n \to \infty} \sum_{k=0}^{n-1} \varphi_X \PAR{\alpha F(t_k)} \PAR{t_{k+1}-t_k} = \int_0^t \varphi_X \PAR{\alpha F(s)} \, \rd s .
\]
Again, the same argument applies for the integral with $\varphi_Y$. Finally, taking expectations and using the dominated convergence theorem completes the proof.
\end{proof}

\subsection{An individual model}\label{sec:introduction individual model}

We now define an \textit{individual model}. To this end, we consider a population of $N$ individuals and introduce $\chi=(\chi(t))_{t\geqslant 0}$, a multivariate càdlàg jump process describing the state of each individual. More precisely, we have $\chi(t)=\PAR{\chi^1(t), \dots, \chi^N(t)}$ in which $\chi^j(t) \in\{\rA,\rB\}$ represents the state of the $j$-th individual at time $t$. Then define the number of individuals of type $\rA$ at time $t$ by $I(t):=\sum_{j=1}^N \ind_\BRA{\chi^j(t)=\rA}$.

In this model, let $X$ denote a generic risk process for an individual of type $\rA$, and $Y$ a generic risk process for an individual of type $\rB$. Let $X^1,\ldots,X^N$ be i.i.d.\ processes that are distributed as $X$, and let $Y^1,\ldots,Y^N$ be i.i.d.\ processes that are distributed as $Y$ (and that are independent of $X^1,\ldots,X^N$). Finally, let us define the risk process corresponding to the $j$-th individual by
\begin{align}\label{eq:def W^j}
    W^j(t)&:=\int_{0+}^t\ind_\BRA{\chi^j(s-)=\rA}\rd X^j(s)+\int_{0+}^t\ind_\BRA{\chi^j(s-)=\rB}\rd Y^j(s).
\end{align}
The central object of interest is the aggregated risk process
\begin{equation}\label{eq:def-W}
W(t):= \sum_{j=1}^{N}W^j(t) .
\end{equation}

We refer to this model as the {\it individual} model, as it distinguishes the contributions from each individual within the population to the overall risk. The individual risk processes $(W^j)_{1 \leq j \leq N}$ are not assumed to be independent, but independent conditional on $\chi$.

\begin{remark}
For each $j$, as $\ind_\BRA{\chi^j(t)=\rA}=1-\ind_\BRA{\chi^j(t)=\rB}$, we note that $W^j$, defined in~\eqref{eq:def W^j}, falls within the framework of the collective model with $F^j(s)=\ind_\BRA{\chi^j(s)=\rA}$.
\end{remark}

\begin{lemma}\label{lem:characteristic function W}
The characteristic function of the risk process $W$ is given by
\begin{align}\label{eq:characteristic function W}
\dE\SBRA{\re^{i\alpha\, W(t)}}&=\dE\SBRA{\exp\left({\varphi_X(\alpha )\int_0^tI(s)\,\rd s+\varphi_Y(\alpha )\int_0^t(N-I(s))\,\rd s}\right)} \notag\\
&=\re^{\varphi_Y(\alpha)Nt}\,\dE\SBRA{\exp\left({ \PAR{\varphi_X(\alpha )-\varphi_Y(\alpha)}\int_0^tI(s)\,\rd s}\right)}. 
\end{align}
\end{lemma}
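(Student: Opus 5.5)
We plan to reduce the statement to Lemma~\ref{lem:characteristic function V} by conditioning on the population process $\chi$. We take as implicit in the model (as in the itemized list of stochastic objects) that $\chi$ is independent of the family $(X^1,\ldots,X^N,Y^1,\ldots,Y^N)$, and that these $2N$ Lévy processes are mutually independent.

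\textbf{Step 1: each $W^j$ in isolation.} By the Remark above, $W^j$ is a collective model with weight $F^j(s)=\ind_{\{\chi^j(s)=\rA\}}$, which is càdlàg and $[0,1]$-valued. Applying Lemma~\ref{lem:characteristic function V} to the triple $(X^j,Y^j,F^j)$, and noting that its proof gives the identity before the final expectation is taken, we get
\[
\dE\SBRA{\re^{i\alpha W^j(t)}\,\Big\vert\,\cF^{\chi}_t}=\exp\PAR{\int_0^t \varphi_X\PAR{\alpha F^j(s)}\rd s+\int_0^t\varphi_Y\PAR{\alpha(1-F^j(s))}\rd s}.
\]
Since $F^j$ takes only the values $0$ and $1$, and $\varphi_X(0)=\varphi_Y(0)=0$, the integrands simplify: $\varphi_X(\alpha F^j(s))=\varphi_X(\alpha)F^j(s)$ and $\varphi_Y(\alpha(1-F^j(s)))=\varphi_Y(\alpha)(1-F^j(s))$.

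\textbf{Step 2: conditional independence and product.} Conditionally on $\cF^\chi_t$, the $W^j$ are independent, because they are built from disjoint independent pairs $(X^j,Y^j)$. To make this rigorous, we would use the same Riemann-sum approximation as in Lemma~\ref{lem:characteristic function V}, applied to all $j$ simultaneously. The joint sum $\sum_j\sum_k$ factorizes over the independent increments given $\chi$, and its limit in probability passes through the bounded exponential by dominated convergence. Taking the product over $j$ gives
\[
\dE\SBRA{\re^{i\alpha W(t)}\,\Big\vert\,\cF^\chi_t}=\exp\PAR{\varphi_X(\alpha)\int_0^t\sum_j F^j(s)\,\rd s+\varphi_Y(\alpha)\int_0^t\sum_j\PAR{1-F^j(s)}\rd s}.
\]
Here $\sum_j F^j(s)=I(s)$ and $\sum_j(1-F^j(s))=N-I(s)$.

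\textbf{Step 3: conclusion.} Taking expectations yields the first equality. For the second, we factor out the deterministic term $\re^{\varphi_Y(\alpha)Nt}$.

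The only delicate point is Step 2: justifying conditional independence through the stochastic-integral approximation. We handle it as in the earlier proof, via the ucp convergence from \cite[Theorem II.21]{protter04} together with bounded convergence.
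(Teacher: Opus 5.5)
Your proposal is correct and follows the paper's own proof. Like the paper, you condition on $\cF^\chi_t$, apply the conditional form of Lemma~\ref{lem:characteristic function V} to each $W^j$ with weight $\ind_{\{\chi^j(s)=\rA\}}$, linearize using $\varphi_X(0)=\varphi_Y(0)=0$, and multiply over $j$ by conditional independence. Your joint Riemann-sum argument for the conditional-independence step adds rigor where the paper only asserts it.
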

\begin{proof}
We deduce  the characteristic function of $W$ from Lemma~\ref{lem:characteristic function V}. Conditional on $\cF^\chi_t$, which is the $\sigma$-algebra generated by $(\chi(s))_{s\in[0,t]}$, from the proof of Lemma~\ref{lem:characteristic function V}, by the independence between $X^j,Y^j,\chi$, we have
\[
\dE\SBRA{\re^{i\alpha W^j}\,\Big\vert \,\cF_t^\chi}=\exp\left({\int_0^t\varphi_X\PAR{\alpha \ind_\BRA{\chi^j(s)=\rA}}\, \rd s+\int_0^t\varphi_Y\PAR{\alpha \PAR{1-\ind_\BRA{\chi^j(s)=\rA}} }\,\rd s}\right).
\]
As $\varphi_X(0)=0=\varphi_Y(0)$, we can write
\[
\dE\SBRA{\re^{i\alpha \,W^j}\,\Big\vert \,\cF_t^\chi}=\exp\left({\varphi_X\PAR{\alpha }\int_0^t\ind_\BRA{\chi^j(s)=\rA}\,\rd s+\varphi_Y\PAR{\alpha}\int_0^t \PAR{1-\ind_\BRA{\chi^j(s)=\rA}} \,\rd s}\right).
\]
As $(W^j)_{1 \leq j \leq N}$ on $[0,t]$ are conditionally independent given $\mathcal{F}_t^\chi$, we thus have
\begin{align*}
\dE\SBRA{\re^{i\alpha \,W}}&=\dE\SBRA{\prod_{j=1}^N\exp\left({\varphi_X\PAR{\alpha }\int_0^t\ind_\BRA{\chi^j(s)=\rA}\,\rd s+\varphi_Y\PAR{\alpha}\int_0^t \PAR{1-\ind_\BRA{\chi^j(s)=\rA}}\, \rd s}\right)}\\
&=\dE\SBRA{\left({\varphi_X\PAR{\alpha }\int_0^tI(s)\,\rd s+\varphi_Y\PAR{\alpha}\int_0^t \PAR{N-I(s)}\,\rd s}\right)} .
\end{align*}
\end{proof}

It can now be seen that $W$ has the same distribution as the sum of time-changed versions of the risk processes $X$ and $Y$. To this end, define the process $U=(U(t))_{t\ge 0}$, for any $t\geq 0$, by 
\[
U(t):=X\left(J(t)\right)+Y\left(Nt-J(t)\right) \:\:\text{ with } J(t):=\int_0^tI(s)\,\rd s.
\]

\begin{proposition}\label{prop:equality in distribution of $W$} 
     The processes $W$ and $U$ have the same distribution. 
    \end{proposition}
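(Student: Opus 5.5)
The plan is to show that $W$ and $U$ have the same finite-dimensional distributions. Both processes have càdlàg paths ($W$ as a sum of stochastic integrals, $U$ as a Lévy process composed with the continuous nondecreasing time changes $J$ and $Nt-J$). So equality of finite-dimensional laws gives equality in law on the Skorokhod space. Throughout, $X$, $Y$ in the definition of $U$ are taken independent of $\chi$, hence of $I$ and $J$.

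Fix $0=t_0<t_1<\dots<t_m$ and $\alpha_1,\dots,\alpha_m\in\dR$. I would compute the joint characteristic function of the increments, $\dE\bigl[\exp\bigl(i\sum_{k}\alpha_k(W(t_k)-W(t_{k-1}))\bigr)\bigr]$, and the analogous quantity for $U$. This suffices, since the increments determine the vector $(W(t_1),\dots,W(t_m))$ linearly.

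For $W$, I would condition on $\cF^\chi_{t_m}$ and repeat the discretisation argument of Lemma~\ref{lem:characteristic function V} for each $W^j$, now with the piecewise constant frequency $\alpha(s)=\alpha_k$ for $s\in(t_{k-1},t_k]$. The Riemann-sum approximations of the stochastic integrals on each interval, together with independence of $X^j$, $Y^j$ and $\chi$, give
\[
\dE\Bigl[\re^{i\sum_k\alpha_k(W^j(t_k)-W^j(t_{k-1}))}\,\Big\vert\,\cF^\chi_{t_m}\Bigr]=\exp\Bigl(\sum_{k}\int_{t_{k-1}}^{t_k}\bigl(\varphi_X(\alpha_k)\ind_{\{\chi^j(s)=\rA\}}+\varphi_Y(\alpha_k)\ind_{\{\chi^j(s)=\rB\}}\bigr)\rd s\Bigr).
\]
This uses $\varphi_X(0)=\varphi_Y(0)=0$ exactly as in the proof of Lemma~\ref{lem:characteristic function W}. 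Conditional independence of the $W^j$ given $\chi$ then lets me multiply over $j$, giving
\[
\exp\Bigl(\sum_k\bigl[\varphi_X(\alpha_k)\bigl(J(t_k)-J(t_{k-1})\bigr)+\varphi_Y(\alpha_k)\bigl(N(t_k-t_{k-1})-J(t_k)+J(t_{k-1})\bigr)\bigr]\Bigr).
\]

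For $U$, I would condition on $\cF^I$. Given $I$, the times $J(t_k)$ are deterministic and nondecreasing in $k$, and so are $Nt_k-J(t_k)$, because $0\le I\le N$ makes $Nt-J(t)$ nondecreasing. The increments $X(J(t_k))-X(J(t_{k-1}))$ are therefore conditionally independent with characteristic functions $\exp(\varphi_X(\alpha_k)(J(t_k)-J(t_{k-1})))$, and similarly for $Y$. Since $X$ and $Y$ are independent of each other and of $I$, the conditional joint characteristic function of the increments of $U$ is exactly the same expression as for $W$. Taking expectations, by bounded convergence, gives equality of the unconditional joint characteristic functions, which finishes the argument.

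The main obstacle is the multi-time version of the conditional computation for $W$. Lemma~\ref{lem:characteristic function V} is stated only for a single time and a single $\alpha$. I expect it to extend directly because the Riemann-sum argument is local in time. To make it rigorous I would refine the partition so that it contains all $t_k$, and check that the ucp limit of \cite[Theorem II.21]{protter04} yields convergence of the joint characteristic functions, using dominated convergence with the bound $|\re^{iz}|=1$. A secondary point to state clearly is the implicit independence of $(X,Y)$ from $\chi$ in the definition of $U$, together with the measurability of $J$, which is needed to justify the conditioning.
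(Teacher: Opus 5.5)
Your proposal is correct and follows essentially the same route as the paper's proof in Appendix A. In both, you reduce to finite-dimensional distributions, compute the conditional joint characteristic function of the increments of $W$ given $\chi$ (via the Riemann-sum argument of Lemma~\ref{lem:characteristic function V} and conditional independence of the $W^j$) and of $U$ given $I$ (via independent increments of $X$ and $Y$ at the $\cF^I$-measurable nondecreasing times $J(t_k)$ and $Nt_k-J(t_k)$), observe that both equal the same expression in $J(t_k)-J(t_{k-1})$, and take expectations; the paper works with $\beta_k=\sum_{\ell\geq k}\alpha_\ell$ instead of increments directly, which is equivalent, and your remarks on refining the partition and on the implicit independence of $(X,Y)$ from $\chi$ simply make explicit what the paper leaves implicit.
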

This proposition is proved in Appendix~\ref{A:proof equality in distribution of $W$}.

\begin{remark}\em
When $I$ is a birth-and-death process, $W$ can also be written as a double sum. To this end, introduce $(T_k)_{k\geq 0}$ the successive jump times of the birth-and-death process and let $K_t$ be the number of jumps until time $t$. Using the notations $X^j(s;t):=X^j(t)-X^j(s)$, $Y^j(s;t):=Y^j(t)-Y^j(s)$, and $1_j(t):=\ind_{\BRA{\chi_N^j(t)=\rA}}$, we can alternatively write
\begin{align}
    W(t)=\:\sum_{k=0}^{K_t-1} &\Bigg(\sum_{j=1}^{N}X^j(T_k;{T_{k+1}})1_j(T_k)+\sum_{j=1}^{N}Y^j(T_k;{T_{k+1}})(1-1_j(T_k))\Bigg)\,+\notag
    \\&\Bigg(\sum_{j=1}^{N}X^j({T_{K_t}};t)1_j(T_{K_t})+\sum_{j=1}^{N}Y^j({T_{K_t}};t)(1-1_j(T_{K_t}))\Bigg).\label{eq:WNT}
\end{align}
It is worth noting that the above expression for $W(t)$ can be used to provide a direct proof of the characteristic function of $W$, as stated in Lemma~\ref{lem:characteristic function W}.\hfill$\Diamond$
\end{remark}

\subsection{Comparison of the two models}
Having introduced the collective and individual models, we can shed more light on their relationship. For this section only, we denote by $X^\rc$ and $Y^\rc$ the L\'evy processes in the collective model, and by $X^\ri$ and $Y^\ri$ their counterparts in the individual model. From the respective characteristic functions of the risk processes $V$ and $W$, we note that they do not have the same distribution, but we can easily compare their means and variances.

\begin{lemma}\label{lem:esp-variance}
Assume that $X^\rc, X^\ri$ and $Y^\rc, Y^\ri$ have  finite second moments. Then, 
\begin{align*}
&\dE[V(t)]=\dE[X^\rc(1)]\int_0^t\dE[F(s)]\,\rd s+\dE[Y^\rc(1)]\int_0^t\dE[1-F(s)]\,\rd s,\\
&\var(V(t))=\var(X^\rc(1))\, \int_0^t \dE\SBRA{F^2(s)}\,\rd s+\var(Y^\rc(1))\,\int_0^t\dE\SBRA{\PAR{1-F(s)}^2}\rd s\\
&\hskip 2.5cm+\PAR{\dE[X^\rc(1)]-\dE[Y^\rc(1)]}^2\var\PAR{\int_0^t F(s)\,\rd s},
\\
&\dE[W(t)]=\dE[X^\ri(1)]\int_0^t\dE[I(s)]\rd s+\dE[Y^\ri(1)]\int_0^t\dE[N-I(s)]\,\rd s,\\
&\var(W(t))=
\var(X^\ri(1))\,\int_0^t \dE\SBRA{I(s)}\,\rd s+\var(Y^\ri(1))\,\int_0^t\dE\SBRA{N-I(s)}\,\rd s,\\
&\hskip 2.5cm+\PAR{\dE[X^\ri(1)]-\dE[Y^\ri(1)]}^2\var\PAR{\int_0^tI(s)\,\rd s}.
\end{align*}
\end{lemma}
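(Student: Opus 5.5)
Differentiating the characteristic functions of Lemma~\ref{lem:characteristic function V} and Lemma~\ref{lem:characteristic function W} at $\alpha=0$ would work, but justifying differentiation under $\dE$ is a nuisance. I will instead condition on the population process and use the laws of total expectation and total variance. Write $m_X=\dE[X^\rc(1)]$, $s_X^2=\var(X^\rc(1))$, and similarly $m_Y$, $s_Y^2$. The key observation is that, conditionally on $\cF^F$ and by independence of $X,Y,F$, the processes $X$ and $Y$ remain independent Lévy processes with finite second moments. The compensated processes $\tilde X(t)=X(t)-m_Xt$ and $\tilde Y(t)=Y(t)-m_Yt$ are then square-integrable martingales with respect to the filtration $\cG_t=\cF^F\vee\cF^X_t\vee\cF^Y_t$, where all of $F$ has been enlarged into time $0$.

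\textbf{Collective model.} Since $F$ is $\cG_0$-measurable, $F(s-)$ is $\cG$-predictable and bounded. Decompose
\[
V(t)=m_X\int_0^tF(s)\,\rd s+m_Y\int_0^t(1-F(s))\,\rd s+M(t),
\]
with $M(t)=\int_{0+}^tF(s-)\,\rd\tilde X(s)+\int_{0+}^t(1-F(s-))\,\rd\tilde Y(s)$. Here $F(s-)$ may be replaced by $F(s)$ in the Lebesgue integrals because $F$ has countably many jumps. The process $M$ is a square-integrable $\cG$-martingale with $M(0)=0$, so $\dE[M(t)\mid\cF^F]=0$. Moreover $\langle\tilde X\rangle_t=s_X^2t$, $\langle\tilde Y\rangle_t=s_Y^2t$, and $\langle\tilde X,\tilde Y\rangle=0$ by independence. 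The Itô isometry therefore gives
\[
\dE[M(t)^2\mid\cF^F]=s_X^2\int_0^tF(s)^2\,\rd s+s_Y^2\int_0^t(1-F(s))^2\,\rd s.
\]
Taking expectations (Fubini) yields the mean formula. For the variance, $\var(V(t))=\dE[\var(V(t)\mid\cF^F)]+\var(\dE[V(t)\mid\cF^F])$. The conditional mean is $m_Yt+(m_X-m_Y)\int_0^tF(s)\,\rd s$, whose variance is $(m_X-m_Y)^2\var(\int_0^tF)$. The expected conditional variance gives the first two terms.

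\textbf{Individual model.} The quickest route is Proposition~\ref{prop:equality in distribution of $W$}: $W(t)$ has the law of $X(J(t))+Y(Nt-J(t))$, where $J(t)=\int_0^tI(s)\,\rd s$ is independent of $X,Y$. Conditionally on $J$, this is a sum of independent variables with mean $m_XJ+m_Y(Nt-J)$ and variance $s_X^2J+s_Y^2(Nt-J)$, because Lévy processes satisfy $\var(X(u))=u\,\var(X(1))$. The law of total variance then gives the formulas at once, using $\dE[J(t)]=\int_0^t\dE[I(s)]\,\rd s$. Alternatively, one can apply the collective computation to each $W^j$ with $F^j=\ind_{\{\chi^j=\rA\}}$. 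In that case $(F^j)^2=F^j$, and the conditional cross-covariances vanish by conditional independence given $\chi$.

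\textbf{Main obstacle.} The delicate step is the rigorous justification of the conditional martingale/isometry claim: that the stochastic integral defined in the original filtration coincides with the one in the enlarged filtration $\cG$. I would handle it through the Riemann-sum approximation of \cite[Theorem II.21]{protter04}, already used in Lemma~\ref{lem:characteristic function V}. There the conditional first and second moments of the sums $\sum_kF(t_k)(X(t_{k+1})-X(t_k))$ are computed explicitly from independent increments. One then passes to the limit, upgrading convergence in probability to $L^2$ convergence via the uniform $L^2$ bound provided by the boundedness of $F$.
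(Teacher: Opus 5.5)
Your proof is correct. For the collective model it takes a genuinely different route from the paper; for the individual model it is essentially the paper's argument.

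\textbf{Collective model.} The paper differentiates the characteristic function of Lemma~\ref{lem:characteristic function V} twice at $\alpha=0$. It then regroups terms using $\varphi_X'(0)=i\dE[X(1)]$, $\varphi_X''(0)=-\var(X(1))$ and $\cov(\int_0^tF,\int_0^t(1-F))=-\var(\int_0^tF)$. It does not justify interchanging $\partial_\alpha$ and $\dE$. That step is routine here, since $F$ is $[0,1]$-valued, $|\re^{\int\varphi_X(\alpha F)}|\le 1$, and $\varphi_X,\varphi_Y$ have bounded second derivatives.

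You instead enlarge the filtration initially by the independent process $F$. You write $V$ as an $\cF^F$-measurable drift plus a $\cG$-martingale, then combine the conditional It\^o isometry with the law of total variance. This costs some stochastic-calculus bookkeeping. In return, it produces $\dE[V(t)\mid\cF^F]$ and $\var(V(t)\mid\cF^F)$ explicitly, which makes the structure of the formula transparent. The first two terms are the expected conditional (intrinsic) variance, and the last is the variance of the conditional mean (environmental noise). The paper gets the same formula in a few lines by reusing Lemma~\ref{lem:characteristic function V}, but without this interpretation.

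\textbf{Individual model.} Your argument via $W\overset{\rm d}{=}U$ and conditioning on $J(t)$ is the paper's Wald-type computation. Your alternative through the $W^j$ is sound precisely because you sum conditional variances given $\cF^\chi$. Summing the unconditional variances of the $W^j$ from the $V$-formula would miss the covariances $(\dE[X(1)]-\dE[Y(1)])^2\cov(\int_0^tF^j,\int_0^tF^k)$ created by the common environment $\chi$.

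\textbf{Your ``main obstacle''.} It is simpler than you suggest. By \cite[Theorem II.21]{protter04}, the integrals of $F(s-)$ against $X$ computed in $\cF$ and in $\cG$ are both ucp limits of the same filtration-free Riemann sums. Hence they coincide a.s., and the $\cG$-isometry applies directly.

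Your proposed upgrade is also misstated. A uniform $L^2$ bound gives uniform integrability of the Riemann sums, not of their squares. It therefore yields convergence of means but not of second moments. What does work is to apply your explicit conditional computation to differences of sums. For the compensated sums $S_n$ with step integrands $F^{(n)}$, $\dE[(S_n-S_m)^2\mid\cF^F]=\var(X(1))\int_0^t(F^{(n)}(s)-F^{(m)}(s))^2\,\rd s$. This tends to $0$ by dominated convergence, so the sums are Cauchy in $L^2$.
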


A proof of this last lemma is provided in Appendix~\ref{B:proof means and variances}.

\begin{remark}\em 
Taking into account the size of the population $N$, recalling that $F(t)={I_N(t)}/{N}$, we can argue that both models have the same mean, but the collective model has a smaller variance. Indeed, for the mean we have 
\begin{align*}
\dE[V(t)]&=\frac{1}{N}\dE[X^\rc(1)]\int_0^t\dE[I(s)]\,\rd s+\frac{1}{N}\dE[Y^\rc(1)]\int_0^t\dE[N-I(s)]\,\rd s.
\end{align*}
Since $F^2\leq F$ and $(1-F)^2\leq 1-F$, we easily observe that 
\begin{align*}
\var(V(t))&\leq \frac{1}{N}\var(X^\rc(1))\,\int_0^t \dE\SBRA{I(s)}\rd s+\frac{1}{N}\var(Y^\rc(1))\,\int_0^t\dE\SBRA{N-I(s)}\rd s\\
&\hskip 1.5cm+\PAR{\frac{\dE[X^\rc(1)]-\dE[Y^\rc(1)]}{N}}^2\var\PAR{\int_0^tI(s)\,\rd s}.
\end{align*}
The latter inequality reflects that the individual model has a higher degree of \textit{intrinsic variability}, compared to the collective model. 

Note that when there is only one class, i.e., when $F\equiv 1$ and $I\equiv N$, the risk processes $V$ and $W$ are the same if we assume that the collective risk process is the sum of independent individual risk processes.  \hfill$\Diamond$
\end{remark}
A modeller with detailed knowledge of the portfolio is best placed to decide whether an individual or a collective model is more appropriate. In particular, if a portfolio is small, comprising only a few insureds/risks or with data available at the individual level, it seems reasonable to expect that an individual model will be more appropriate, as each individual has a more significant impact on the aggregated claims. Larger, more homogeneous portfolios, on the other hand, fit more naturally within a collective model.

\medskip
The variety of processes $F$ and $I$ allowed within this framework is vast. In the following example, we discuss an important Markovian subclass. We emphasize, however, that the framework is by no means restricted to such Markovian processes.

\begin{exple}\em 
 An important example for the process $F$ is the following. Consider the collective model in which $F$ attain values in $\{0,1/N,2/N,\ldots,1\}$ and let it jump between those values according to an $(N+1)$-dimensional irreducible continuous-time Markov chain, with transition rate matrix $\smash{Q=(Q_{k\ell})_{k,\ell=0}^N}$ (i.e., $Q_{k\ell}\geq 0$ for $k\not=\ell$ and row sums are $0$). The characteristic function of $V(t)$ can be identified as follows. Define, for given $ 0\leq k,\ell\leq N$ and $\alpha\in{\mathbb R}$,
\[\Psi^\rc_\ell(t):= {\mathbb E}\left[{\rm e}^{i\alpha\,V(t)}\ind_{\{F(t)=\ell/N\}}\,\Big|\,F(0)=k/N\right].\] 
Then by using Lemma \ref{lem:characteristic function V} it is readily verified, setting up Kolmogorov equations, that, as $\Delta t\downarrow 0$,
\[\Psi^\rc_{\ell}(t+\Delta t)= \sum_{m\not=\ell} \Psi^\rc_m(t)\,Q_{m\ell}\,\Delta t+\Psi^\rc_\ell(t)\,(1+Q_{\ell\ell}\,\Delta t)\exp\big(\Delta t\,\psi^\rc_\ell(\alpha)\big),\]
with $\psi_\ell^\rc(\alpha):=\varphi_X(\alpha\,\ell/N)+\varphi_Y(\alpha\,(1-\ell/N))$.
Upon expanding the exponential, subtracting $\Psi^\rc_{\ell}(t)$ from both sides, dividing by $\Delta t$ and sending $\Delta t$ to $0$ yields a system of linear differential equations:
\begin{equation}\frac{\rm d}{{\rm d}t}\Psi^\rc_\ell(t)=\sum_{m=0}^N \Psi^\rc_m(t)\,Q_{m\ell}+\Psi^\rc_\ell(t)\,\psi^\rc_\ell(\alpha), \label{eq:syst de}\end{equation}
with evident initial conditions. 
It is now instructive to consider a similar setup in the individual model, where $I$ is an irreducible continuous-time Markov chain on $\{0,\ldots,N\}$, with transition rate matrix $\smash{Q=(q_{k\ell})_{k,\ell=0}^N}$. Define, for a given $k,\ell\in \{0,\ldots,N\}$ and $\alpha\in{\mathbb R}$,
\[\Psi^\ri_\ell(t):= {\mathbb E}\left[{\rm e}^{i\alpha\,W(t)}\ind_{\{I(t)=\ell\}}\,\Big|\,I(0)=k\right]\]
and $\psi_\ell^\ri(\alpha):=\varphi_X(\alpha)\,\ell+\varphi_Y(\alpha)\,(N-\ell)$. 
It is straightforward to check, applying Lemma \ref{lem:characteristic function W}, that we obtain the same system of linear differential equations as in \eqref{eq:syst de}, but with the superscript c replaced by i.  
Observe that the main difference between the collective model and the individual model lies in the distinction between the functions $\psi_\ell^\mathrm{c}(\alpha)$ and $\psi_\ell^\mathrm{i}(\alpha)$.
 \hfill$\Diamond$  
\end{exple}

\section{Long time behavior}\label{sec: long time behavior}

In this section, we investigate the long-time behavior of the processes $V$ and $W$, as defined in \eqref{eq:def-V} and \eqref{eq:def-W}, respectively. Specifically, for each of these processes, we establish both a \textit{law of large numbers} and a \textit{central limit theorem}. In what follow, `$\smash{\overset{\rm {\small a.s.}}{\longrightarrow}}$' denotes almost sure convergence, and `$\smash{\overset{\rm {\small d}}{\longrightarrow}}$' denotes convergence in distribution.

Throughout, we again assume that the models under consideration are defined as specified in the 
\emph{Stochastic objects} paragraph of the introduction.

\subsection{Asymptotic behavior in the collective model}

The main objective of this subsection is to  prove a strong law of large numbers ({\sc slln}) and a central limit theorem ({\sc clt}) for the process $V$.
The following assumption is in place.
\begin{assumption}\label{A:hyp LLN for V}
There exists a constant $\rho\in[0,1]$ such that 
\begin{equation}\label{eq:def-rho}
 \frac{1}{t}\int_{0}^{t}F(s)\, \rd s  \overset{\rm {\small a.s.}}{\underset{t\to \infty}{\longrightarrow}}\rho.
\end{equation}
\end{assumption}
Note that, if $F$ converges almost surely to a (nonnegative) constant or if $F$ is an ergodic Markov process, then Assumption~\ref{A:hyp LLN for V} is satisfied. 

\begin{proposition}\label{prop:LLN for stochastic integral}
Under Assumption~\ref{A:hyp LLN for V}, if $X(1)$ has a finite second moment, then
\begin{equation*}
\frac{1}{t} \int_{0+}^{t} F(s-) \,\rd X(s) \,\overset{\rm {\small a.s.}}{\underset{t\to \infty}{\longrightarrow}} \rho\,\dE[X(1)].
\end{equation*}
\end{proposition}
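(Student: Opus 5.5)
Write $\mu:=\dE[X(1)]$ and $\tilde X(t):=X(t)-\mu t$. Since $X(1)$ has a finite second moment, $\tilde X$ is a square-integrable martingale with predictable quadratic variation $\langle \tilde X\rangle_t=\sigma^2 t$, where $\sigma^2:=\var(X(1))$. We decompose
\[
\frac1t\int_{0+}^t F(s-)\,\rd X(s)=\frac{\mu}{t}\int_0^t F(s)\,\rd s+\frac1t\int_{0+}^t F(s-)\,\rd \tilde X(s).
\]
Here $F(s-)$ may be replaced by $F(s)$ in the Lebesgue integral, because a càdlàg path has at most countably many jumps. By Assumption~\ref{A:hyp LLN for V}, the first term converges almost surely to $\rho\mu$. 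It therefore remains to show that the martingale term $M(t):=\int_{0+}^t F(s-)\,\rd\tilde X(s)$ satisfies $M(t)/t\to0$ almost surely.

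To make $M$ a martingale, we work in the filtration $\cG_t:=\cF^F_\infty\vee\cF^{X}_t$, in which $F$ is known from time $0$. By the independence of $X$ and $F$, the process $\tilde X$ is still a Lévy martingale with respect to $(\cG_t)$. The integrand $F(s-)$ is left-continuous, adapted and bounded by $1$, hence predictable. Consequently $M$ is a square-integrable $(\cG_t)$-martingale with
\[
\langle M\rangle_t=\sigma^2\int_0^t F(s)^2\,\rd s\le \sigma^2 t.
\]

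For the almost sure convergence we apply a strong law for martingales. One route is the criterion that $M(t)/A(t)\to0$ a.s. whenever $A$ is increasing to infinity and $\int_0^\infty \rd\langle M\rangle_s/(1+s)^2<\infty$; take $A(t)=1+t$, and note this integral is bounded by $\sigma^2\int_0^\infty (1+s)^{-2}\,\rd s<\infty$. A more elementary route avoids citing that result:
\begin{enumerate}[label=(\roman*)]
\item Doob's $L^2$ inequality gives
\[
\dE\Big[\sup_{t\le 2^{n+1}}|M(t)|^2\Big]\le 4\sigma^2 2^{n+1}.
\]
\item Chebyshev's inequality then gives
\[
\dP\Big(\sup_{2^n\le t\le 2^{n+1}}|M(t)|/t>\varepsilon\Big)\le \frac{8\sigma^2 2^{n}}{\varepsilon^2 4^{n}},
\]
which is summable in $n$.
\item Borel–Cantelli yields $\limsup_{t\to\infty}|M(t)|/t\le\varepsilon$ almost surely, and letting $\varepsilon\downarrow0$ along a countable sequence gives $M(t)/t\to0$ almost surely.
\end{enumerate}

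The main obstacle is purely technical: justifying that the stochastic integral in the original filtration $\cF$ coincides with the one in the enlarged filtration $(\cG_t)$. This holds because $\tilde X$ remains a semimartingale, indeed a martingale, under $\cG$ thanks to independence. It is also confirmed by the Riemann-sum representation used in the proof of Lemma~\ref{lem:characteristic function V}, since that limit in probability does not depend on the filtration. Once this is settled, the argument above is routine and uses only the boundedness $0\le F\le1$.
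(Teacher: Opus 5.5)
Your proof is correct and follows the paper's argument essentially step for step. Both use the same martingale $M(t)=\int_{0+}^t F(s-)\,\rd X(s)-\dE[X(1)]\int_0^t F(s)\,\rd s$, the bound $\dE[M(s)^2]\le s\,\var(X(1))$, Doob's $L^2$ maximal inequality on the dyadic blocks $[2^n,2^{n+1}]$, Chebyshev, and Borel--Cantelli. The only difference is cosmetic: you state the martingale property in the enlarged filtration $\cF^F_\infty\vee\cF^X_t$, whereas the paper states it in $(\cF_t)$ directly (while conditioning on $\cF^F\vee\cF_t$ inside its computation).
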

Note that the result is still true when $\rho$ is an $\cF^F$-measurable random variable.

\begin{proof}
We note that $\PAR{M(t)}_{t \geq 0}$, given by
\begin{equation}
M(t) := \int_{0+}^t F(s-)\,\rd X(s) - \dE[X(1)] \int_0^t F(s)\,\rd s ,
\end{equation}
is a square-integrable, right-continuous martingale. Indeed, $M$ is $\PAR{\cF_t}_{t\geq 0}$-adapted, with finite second moments, and
\begin{align*}
\dE[M_{t+s}\,\vert\,\cF_t]&=M_t+\dE\SBRA{\int_{t+}^{t+s}F(u-)\,\rd X(u)\,\Big\vert \,\cF_t}-\dE[X(1)]\,\dE\SBRA{\int_t^{t+s}F(u)\,\rd u\,\Big\vert\, \cF_t}=M_t.
\end{align*}
To obtain the last equality, we use the independence of $F$ and $X$, together with the independence of the increments of $X$, which yields
\begin{align*}\displaystyle{\dE\SBRA{\int_{t+}^{t+s}F(u-)\,\rd X(u)\,\Big\vert \,\cF^F\vee \cF_t}=\dE[X(1)]\int_t^{t+s}F(u)\,\rd u}. \end{align*}

By Doob's maximal inequality, for $0 \leq r < t$, we can write:
\begin{equation}
\dE \SBRA{\sup_{r \leq s \leq t} \PAR{\frac{M(s)}{s}}^2} \leq \frac{1}{r^2} \dE \SBRA{\sup_{r \leq s \leq t} M(s)^2} \leq \frac{4}{r^2}\sup_{r\leq s\leq t} \dE \SBRA{M(s)^2} ,
\end{equation}
where $\dE \SBRA{M(s)^2} = \var(X(1)) \,\dE \SBRA{\int_0^s F(u)^2 \,\rd u} \leq t\, \var(X(1))$ for $s\in[r,t]$. 
Fix $\delta>0$ and $n \geq 1$. Upon combining the above inequalities, and in addition using Markov's inequality, we conclude
\begin{equation}
\dP \PAR{\sup_{2^n \leq t \leq 2^{n+1}} \ABS{\frac{M(t)}{t}} > \delta} \leq \frac{1}{\delta^2}{\dE \SBRA{\sup_{2^n \leq t \leq 2^{n+1}} \PAR{\frac{M(t)}{t}}^2}}  \leq \frac{4 \var(X(1))}{\delta^2} \frac{1}{2^{n-1}} .
\end{equation}
Now observe that
\begin{align*}
    \sum_{n \geq 1} \dP \PAR{\sup_{2^n \leq t \leq 2^{n+1}} \ABS{\frac{M(t)}{t}} > \delta} < \infty,
    \end{align*}
    so that by the first Borel-Cantelli Lemma we deduce that 
    \begin{align*}
    \dP \PAR{\limsup_{n\to\infty} \BRA{\sup_{2^n \leq t \leq 2^{n+1}} \ABS{\frac{M(t)}{t}} >\delta}}=1.\end{align*} As $\delta$ is arbitrary, the result follows.
\end{proof}

Here is a direct consequence in the collective model.
\begin{corollary}[{\sc slln} for $V$]\label{coro:LLN for V}
Under Assumption~\ref{A:hyp LLN for V}, if $X(1)$ and $Y(1)$ have finite second moments, then 
\[
\frac{1}{t} V(t) \overset{\rm { a.s.}}{\underset{t\to \infty}{\longrightarrow}} \rho\,\dE[X(1)]+(1-\rho)\,\dE[Y(1)].
\]
\end{corollary}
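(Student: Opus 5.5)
The corollary follows by splitting $V$ according to its definition \eqref{eq:def-V} and applying Proposition~\ref{prop:LLN for stochastic integral} to each of the two stochastic integrals.

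\textbf{First term.} Under Assumption~\ref{A:hyp LLN for V}, and since $X(1)$ has a finite second moment, Proposition~\ref{prop:LLN for stochastic integral} gives directly
\[
\frac1t\int_{0+}^t F(s-)\,\rd X(s) \overset{\rm a.s.}{\underset{t\to\infty}{\longrightarrow}} \rho\,\dE[X(1)].
\]

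\textbf{Second term.} Here I apply the same proposition with $F$ replaced by $G:=1-F$ and $X$ replaced by $Y$. This requires checking that the pair $(G,Y)$ satisfies the proposition's hypotheses.
\begin{itemize}
\item $G$ is càdlàg and takes values in $[0,1]$.
\item $G$ is independent of $Y$, since $F$ and $Y$ are independent.
\item $Y(1)$ has a finite second moment by assumption.
\item $G$ satisfies Assumption~\ref{A:hyp LLN for V} with constant $1-\rho$, because
\[
\frac1t\int_0^t G(s)\,\rd s = 1-\frac1t\int_0^t F(s)\,\rd s \overset{\rm a.s.}{\longrightarrow} 1-\rho.
\]
\end{itemize}

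The proof of Proposition~\ref{prop:LLN for stochastic integral} uses only these properties: the martingale $M$ built from $(G,Y)$ is square-integrable, and $\dE[M(s)^2]\le s\var(Y(1))$ because $G^2\le 1$. It therefore applies verbatim and yields
\[
\frac1t\int_{0+}^t (1-F(s-))\,\rd Y(s)\overset{\rm a.s.}{\longrightarrow}(1-\rho)\,\dE[Y(1)].
\]

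\textbf{Conclusion.} The intersection of two almost sure events is almost sure, so summing the two limits gives
\[
\frac1t V(t)\overset{\rm a.s.}{\underset{t\to\infty}{\longrightarrow}}\rho\,\dE[X(1)]+(1-\rho)\,\dE[Y(1)].
\]

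The only point needing care, and it is minor, is confirming that Proposition~\ref{prop:LLN for stochastic integral} is genuinely symmetric under the substitution $(F,X)\mapsto(1-F,Y)$. This amounts to the verification of its hypotheses carried out above.
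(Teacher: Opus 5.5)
Your proposal is correct and takes the same approach as the paper: it presents the corollary as a direct consequence of Proposition~\ref{prop:LLN for stochastic integral}, applied to each of the two integrals in \eqref{eq:def-V}, with $(1-F,Y)$ in place of $(F,X)$ for the second one. Your check that $1-F$ is c\`adl\`ag, $[0,1]$-valued, independent of $Y$, and satisfies Assumption~\ref{A:hyp LLN for V} with constant $1-\rho$ is exactly the verification the paper leaves implicit.
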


In order to prove the {\sc clt}, we impose a second assumption.
\begin{assumption}\label{A:hyp CLT for V}
There exists a constant
$\eta\in(0,1]$ such that
\begin{equation}\label{eq:def-eta}
  \frac{1}{t} \int_{0}^{t}F(s)^2\rd s \overset{\rm {\small a.s.}}{\underset{t\to \infty}{\longrightarrow}}\eta^2.
\end{equation}
Also, with  $Z^{\rm c}$  a Gaussian random variable with mean $0$ and variance $\sigma_{\rm c}^2$, 
\[
\PAR{\frac{1}{\sqrt t} \int_{0}^{t} \left(F(s) -\dE[F(s)]\right) \rd s} \overset{\rm d}{\underset{t \to \infty}{\longrightarrow}} Z^{\rm c} .
\]
\end{assumption}

The following result gives the classical speed of convergence in Proposition~\ref{prop:LLN for stochastic integral}. Let ${\dZZ}^{\rm c} $ be a Gaussian random variable with mean zero and variance given by
\[
\var(\dZZ^{\rm c}  )=\PAR{\var(X(1))+\var(Y(1))}\,\eta^2+\var(Y(1))\,(1-2\rho)+\PAR{\dE[X(1)]-\dE[Y(1)]}^2\,\sigma_{\rm c}^2,
\]
where $\rho$, $\eta$ and $\sigma_{\rm c}$ are as given in Assumptions~\ref{A:hyp LLN for V} and~\ref{A:hyp CLT for V}.

\begin{proposition}[{\sc clt} for $V$]\label{prop:CLT for V} 
Under Assumptions~\ref{A:hyp LLN for V} and~\ref{A:hyp CLT for V}, if $X(1)$ and $Y(1)$ have finite second moments, then
\begin{equation}
\frac{1}{\sqrt{t}}\PAR{V(t)-\dE[V(t)]}\overset{\rm d}{\underset{t\to \infty}{\longrightarrow}}\dZZ^{\rm c}   .
\end{equation}
\end{proposition}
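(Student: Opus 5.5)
The plan is to work with characteristic functions, conditioning on the environment $F$. Write $\mu_X=\dE[X(1)]$, $\mu_Y=\dE[Y(1)]$, $v_X=\var(X(1))$ and $v_Y=\var(Y(1))$. By Lemma~\ref{lem:esp-variance} we have $\dE[V(t)]=\mu_X\int_0^t\dE[F(s)]\,\rd s+\mu_Y\int_0^t\dE[1-F(s)]\,\rd s$. This gives the decomposition
\[
V(t)-\dE[V(t)] = M(t) + (\mu_X-\mu_Y)\,G(t),
\]
with
\[
M(t):=\int_{0+}^t F(s-)\,\rd X(s)-\mu_X\int_0^tF(s)\,\rd s+\int_{0+}^t(1-F(s-))\,\rd Y(s)-\mu_Y\int_0^t(1-F(s))\,\rd s
\]
and $G(t):=\int_0^t(F(s)-\dE[F(s)])\,\rd s$. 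The term $G(t)$ is $\cF^F$-measurable. By Assumption~\ref{A:hyp CLT for V}, $G(t)/\sqrt t$ converges in distribution to $Z^{\rm c}$.

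\textbf{Step 1 (conditional characteristic function of $M$).} Exactly as in the proof of Lemma~\ref{lem:characteristic function V}, using the independence of $X$, $Y$ and $F$, we get almost surely
\[
\dE\SBRA{\re^{i\alpha M(t)/\sqrt t}\,\Big\vert\,\cF^F}=\exp\PAR{\int_0^t \big[\varphi_X(\tfrac{\alpha F(s)}{\sqrt t})-i\mu_X\tfrac{\alpha F(s)}{\sqrt t}\big]\rd s+\int_0^t \big[\varphi_Y(\tfrac{\alpha (1-F(s))}{\sqrt t})-i\mu_Y\tfrac{\alpha (1-F(s))}{\sqrt t}\big]\rd s}.
\]
Since $X(1)$ has a finite second moment, $\varphi_X(u)=i\mu_Xu-\tfrac{v_X}{2}u^2+u^2\varepsilon_X(u)$ with $\varepsilon_X(u)\to0$ as $u\to0$, and similarly for $Y$. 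Because $F\in[0,1]$, every argument above has modulus at most $|\alpha|/\sqrt t$. The remainder terms are therefore bounded by $\alpha^2\sup_{|u|\le|\alpha|/\sqrt t}(|\varepsilon_X(u)|+|\varepsilon_Y(u)|)\to0$, uniformly in $\omega$. The main terms equal $-\tfrac{\alpha^2}{2}\big(v_X\tfrac1t\int_0^tF^2\,\rd s+v_Y\tfrac1t\int_0^t(1-F)^2\,\rd s\big)$. Expanding $(1-F)^2=1-2F+F^2$ and applying Assumptions~\ref{A:hyp LLN for V} and~\ref{A:hyp CLT for V}, this converges almost surely to $-\tfrac{\alpha^2}{2}\big((v_X+v_Y)\eta^2+v_Y(1-2\rho)\big)=:-\tfrac{\alpha^2}{2}s^2$. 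Hence the conditional characteristic function converges almost surely to the deterministic limit $\re^{-\alpha^2 s^2/2}$.

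\textbf{Step 2 (combining).} Write
\[
\dE\SBRA{\re^{i\alpha(V(t)-\dE V(t))/\sqrt t}}=\dE\SBRA{\dE\SBRA{\re^{i\alpha M(t)/\sqrt t}\,\Big\vert\,\cF^F}\re^{i\alpha(\mu_X-\mu_Y)G(t)/\sqrt t}}.
\]
Replace the conditional factor by $\re^{-\alpha^2s^2/2}$. The error has modulus at most $\dE\big|\dE[\re^{i\alpha M(t)/\sqrt t}\mid\cF^F]-\re^{-\alpha^2s^2/2}\big|$, which tends to $0$ by Step 1 and dominated convergence, since everything is bounded by $2$. The remaining factor $\dE[\re^{i\alpha(\mu_X-\mu_Y)G(t)/\sqrt t}]$ converges to $\re^{-\alpha^2(\mu_X-\mu_Y)^2\sigma_{\rm c}^2/2}$ by Assumption~\ref{A:hyp CLT for V}. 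The product is the characteristic function of $\dZZ^{\rm c}$, and Lévy's continuity theorem concludes the proof.

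\textbf{Main obstacle.} The delicate point is Step 1: the Taylor remainder of $\varphi_X$ and $\varphi_Y$ must be controlled uniformly over the time integral of length $t$. This works only because the integrands are bounded by $1$, so that an $o(1/t)$ bound per unit time integrates to $o(1)$. The Step 1 limit must also be almost sure (or at least in probability) and deterministic, so that it decouples from the $\cF^F$-measurable fluctuation $G(t)$. Without this, the two Gaussian contributions would not simply add in variance.
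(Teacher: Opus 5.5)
Your proof is correct and takes essentially the same route as the paper's. You condition on $\cF^F$ (which is what Lemma~\ref{lem:characteristic function V} encodes), Taylor-expand $\varphi_X,\varphi_Y$ to second order with a remainder that is uniformly $o(1/t)$ because $F\in[0,1]$, identify the almost-sure deterministic limit of the conditional Gaussian part, and then split the error by the triangle inequality into a dominated-convergence term and a term controlled by Assumption~\ref{A:hyp CLT for V}. Introducing the centred part $M(t)$ and its conditional characteristic function explicitly is only a repackaging of the paper's factorisation, though it does make the bound by $1$ needed for dominated convergence immediate.
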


\begin{proof} We prove this result by showing that the characteristic function of $\frac{1}{\sqrt{t}}\PAR{V(t)-\dE[V(t)]}$ converges to that of $\dZZ^{\rm c}  $. Throughout this proof we keep $\alpha\in\dR$ fixed.
By Lemma~\ref{lem:characteristic function V}, we have
\begin{align*}
m_\alpha(t):=\dE\SBRA{\re^{\frac{i\alpha}{\sqrt{t}}\PAR{V(t)-\dE[V(t)]}}}&=\dE\SBRA{\exp\left({\int_0^t\varphi_X\PAR{\frac{\alpha}{\sqrt{t}} F(s)}\rd s+\int_0^t\varphi_Y\PAR{\frac{\alpha}{\sqrt{t}}(1- F(s))}\rd s}\right)}\\
&\hskip -0.044cm \times\exp\left({-\frac{\alpha}{\sqrt{t}}\PAR{\varphi_X'(0)\int_0^t\dE[F(s)]\rd s+\varphi_Y'(0)\int_0^t\dE[1-F(s)]\rd s}}\right).
\end{align*}
Using the Taylor expansion, we have that as $t\to\infty$, for any fixed $s\geq 0$,
\begin{align*}
\varphi_X\PAR{\frac{\alpha}{\sqrt{t}} F(s)}&=\frac{\alpha}{\sqrt{t}} F(s)\varphi'_X(0)+\frac{\alpha^2}{2t} F(s)^2\varphi''_X(0)+\varepsilon_X(s,t)\\
\varphi_Y\PAR{\frac{\alpha}{\sqrt{t}} (1-F(s))}&=\frac{\alpha}{\sqrt{t}} (1-F(s))\varphi'_Y(0)+\frac{\alpha^2}{2t} (1-F(s))^2\varphi''_Y(0)+\varepsilon_Y(s,t)
\end{align*}
with $\varepsilon_X, \varepsilon_Y$ denoting $\cF^F$-measurable random functions, such that $\lim_{t\to \infty}t\varepsilon_X(s,t)=0$ and $\lim_{t\to \infty}t\varepsilon_Y(s,t)=0$ uniformly in $F$ and $s\in\dR_+$ (where it is recalled that $F$ has values in $[0,1]$).
Consequently, with  $\varepsilon(t):=\int_0^t\PAR{\varepsilon_X(s,t)+\varepsilon_Y(s,t)}\,\rd s$,
\begin{align*}
m_\alpha(t)&=\dE\Bigg[
\exp\left({\frac{\alpha}{\sqrt{t}}\PAR{\varphi'_X(0)-\varphi_Y'(0)}\int_0^t\PAR{F(s)-\dE[F(s)]}\rd s
}\right)\\&\quad\quad\exp\left({\frac{\alpha^2}
{2t}\int_0^t\PAR{\varphi_X''(0)F(s)^2+\varphi_Y''(0)(1-F(s))^2}\rd s+\varepsilon(t)}\right)\Bigg]\\
&=\dE\Bigg[
\exp\left({i\frac{\alpha\PAR{\dE[X(1)]-\dE[Y(1)]}}{\sqrt{t}}\int_0^t\PAR{F(s)-\dE[F(s)]}\rd s
}\right)\\&\quad\quad\exp\left({-\frac{\alpha^2}
{2t}\int_0^t\PAR{\var(X(1))F(s)^2+\var(Y(1))(1-F(s))^2}\rd s+\varepsilon(t)}\right)\Bigg].
\end{align*}
By the assumptions imposed, we have, with $\bar\sigma^2:=\PAR{\var(X(1))+\var(Y(1))}\eta^2+\var(Y(1))(1-2\rho)$,
\begin{align*}
&H(t):=\frac{1}{\sqrt t}\int_{0}^{t}\left(F(s) -\dE[F(s)]\right)\rd s\overset{\rm d}{\underset{t\to \infty}{\longrightarrow}}Z^{\rm c}\\
&G(t):=\frac{1}
{t}\int_0^t\PAR{\var(X(1))F(s)^2+\var(Y(1))(1-F(s))^2}\,\rd s\overset{\rm {\small a.s.}}{\underset{t\to \infty}{\longrightarrow}}
\bar\sigma^2.
\end{align*}
Consequently, noting that $\var(\dZZ^{\rm c}  )=\bar\rho^2\sigma_{\rm c}^2+\bar\sigma^2$ with $\bar\rho:=\dE[X(1)]-\dE[Y(1)]$,
\begin{align*}
\ABS{m_\alpha(t)-\re^{-\frac{\alpha^2}{2}\var({\mathscr Z}^{\rm c})}}&=\ABS{\dE\SBRA{\re^{i\alpha\,\bar\rho\,H(t)}\re^{-\frac{\alpha^2}{2}G(t)+\varepsilon(t)}}-\re^{-\frac{\alpha^2}{2}\var({\mathscr Z}^{\rm c})}}\\
&\leq \dE\SBRA{\ABS{\re^{-\frac{\alpha^2}{2}G(t)+\varepsilon(t)}-\re^{-\frac{\alpha^2\bar\sigma^2}{2}}}}+\re^{-\frac{\alpha^2\bar\sigma^2}{2}}\,\ABS{\dE\SBRA{\re^{i\alpha\,\bar\rho\,H(t)}}-\re^{-\frac{\alpha^2}{2}\bar\rho^2\sigma_{\rm c}^2}}.
\end{align*}
The first term in the last expression of the previous display goes to $0$ as $t\to \infty$ by the dominated convergence theorem, while  the second term vanishes by virtue of Assumption~\ref{A:hyp CLT for V}. The conclusion follows. 
\end{proof}

Observe that the expression for $\var(\dZZ^{\rm c}  )$ is in line with the limit of $\var(V(t))/t$ as $t\to\infty$, with $\var(V(t))$ as given in Lemma \ref{lem:esp-variance}.

\subsection{Asymptotic behavior of the individual model}\label{sec: individual model}

In this subsection, we derive the counterparts of the results obtained for the collective model in the previous subsection, now for the individual model. More specifically, we establish a {\sc slln} and a {\sc clt} for the process $W$ defined in \eqref{eq:def-W}. In this model, 
\begin{align*}
   W(t)= \sum_{j=1}^{N}W^j(t), \quad W^j(t)=\int_0^t\ind_\BRA{\chi^j(s^-)=\rA}\rd X^j(s)+\int_0^t\ind_\BRA{\chi^j(s^-)=\rB}\rd Y^j(s).
\end{align*}

From the results on the collective model, we straightforwardly deduce a {\sc slln} for the individual model.
 
 \begin{assumption}\label{A:hyp LLN for W}
 There exists a (deterministic)
 vector $\PAR{\rho^1,\ldots,\rho^N}$ in $[0,1]^N$ such that
     \[
    \lim_{t\to\infty}\frac{1}{t}\int_{0}^{t}\ind_\BRA{\chi^j(s)=\rA} \rd s \overset{\rm {\small a.s.}}{\underset{t\to \infty}{\longrightarrow}}\rho^j.
     \]
 \end{assumption}
 Note that when $\chi$ is an ergodic Markov process, the above limits are well defined. We can now state our {\sc slln}; recall that $I(t)$ denotes the number of individuals of type $\rA$ at time $t$.
 
\begin{proposition}[{\sc slln} for $W$]\label{prop:LLN for W}
   Under Assumption~\ref{A:hyp LLN for W}, if
    $X(1)$ and $Y(1)$ have finite second moments, then,
    \[
    \frac{1}{t} W(t) \overset{\rm {\small a.s.}}{\underset{t\to \infty}{\longrightarrow}}\rho \,\dE[X(1)]+(N-\rho)\, \dE[Y(1)],
    \]
    where $\rho:=\sum_{j=1}^N\rho^j=\lim_{t\to\infty}\frac{1}{t}\int_{0}^{t}I(s) \,\rd s$ almost surely.
\end{proposition}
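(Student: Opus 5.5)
The plan is to reduce the statement to the collective-model results, one individual at a time, and then sum over $j$.

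By the Remark following the definition of $W$, each $W^j$ is an instance of the collective model. It is driven by the Lévy processes $X^j$ and $Y^j$, which have the laws of $X$ and $Y$, and its weight process is $F^j(s):=\ind_{\{\chi^j(s)=\rA\}}$. This weight takes values in $\{0,1\}\subset[0,1]$ and is càdlàg because $\chi$ is. It is independent of $(X^j,Y^j)$ by construction, since the $X^j,Y^j$ are independent of $\chi$.

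To apply Corollary~\ref{coro:LLN for V} to $W^j$, we need Assumption~\ref{A:hyp LLN for V} for $F^j$ with constant $\rho^j$. This is exactly Assumption~\ref{A:hyp LLN for W}. Although $\chi$ is multivariate, only the independence of the triple $(X^j,Y^j,F^j)$ is used in the proof of Proposition~\ref{prop:LLN for stochastic integral}, namely through the martingale property and Doob's inequality. We should also note that the proof uses the filtration generated by $X^j,Y^j$ and $\chi$; the martingale argument goes through unchanged with this filtration, because $X^j$ has independent increments and is independent of $\chi$. The corollary then gives, almost surely,
\[
\frac{1}{t}W^j(t)\longrightarrow \rho^j\,\dE[X(1)]+(1-\rho^j)\,\dE[Y(1)], \qquad t\to\infty .
\]

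Summing over the finitely many $j=1,\dots,N$ preserves almost sure convergence, because a finite intersection of probability-one events has probability one. The limit of $W(t)/t$ is therefore
\[
\Big(\sum_j\rho^j\Big)\dE[X(1)]+\Big(N-\sum_j\rho^j\Big)\dE[Y(1)].
\]
The identification $\rho=\lim_{t\to\infty} t^{-1}\int_0^t I(s)\,\rd s$ follows from $I(s)=\sum_j F^j(s)$, by summing the $N$ almost sure limits in Assumption~\ref{A:hyp LLN for W}.

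There is no real obstacle here. The only point needing care is the independence and filtration remark above, that is, that each $W^j$ genuinely satisfies the hypotheses of Proposition~\ref{prop:LLN for stochastic integral}. The other potential subtlety is the convention in the Remark that $\ind_{\{\chi^j(s-)=\rB\}}=1-F^j(s-)$, which matches the collective form $\int (1-F(s-))\,\rd Y(s)$ exactly.
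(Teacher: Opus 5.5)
Your proposal is correct and follows essentially the same route as the paper. Each $W^j$ is treated as a collective-model process with weight $F^j=\ind_{\{\chi^j=\rA\}}$, Corollary~\ref{coro:LLN for V} is applied under Assumption~\ref{A:hyp LLN for W}, and the $N$ almost sure limits are summed using $I=\sum_j \ind_{\{\chi^j=\rA\}}$; your remarks on independence and the filtration just spell out what the paper asserts in one line.
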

Note that the result is still true when $\PAR{\rho^1,\ldots,\rho^N}$ is an $\cF^F$-measurable random vector.

\begin{proof}
As the processes $(X^j)_{1\leq j\leq N}$, $(Y^j)_{1\leq j\leq N}$ and $\chi_N$ are independent, under Assumption~\ref{A:hyp LLN for W} each individual risk process $W^j$
satisfies the conditions of Corollary~\ref{coro:LLN for V}. Hence, for any $j\in\BRA{1,\ldots,N}$, 
\[
\frac{W^j(t)}{t} \overset{\rm {\small a.s.}}{\underset{t\to \infty}{\longrightarrow}}\rho^j\,\dE[X(1)]+(1-\rho^j)\,\dE[Y(1)].
\]
Taking the sum over $j$, we obtain the result since $I(t)=\sum_{j=1}^N\ind_\BRA{\chi^j(t)=\rA}$.
\end{proof}

So as to prove a {\sc clt} for $W$, we introduce  the following  assumption.
\begin{assumption}\label{A:hyp TCL for W}
With $Z^\ri$ a Gaussian random variable with mean $0$ and variance $\sigma_\ri^2$,
\[
\PAR{\frac{1}{\sqrt t}\int_{0}^{t}\left(I(s) -\dE[I(s)]\right)\rd s}\overset{\rm d}{\underset{t\to \infty}{\longrightarrow}}Z^\ri.
\]
\end{assumption}
Let $\dZZ^{\ri}$ be a Gaussian random variable with mean zero and variance given by
\[
\var(\dZZ^{\ri})=\rho\,\var(X(1))+(N-\rho)\,\var(Y(1))+(\dE[X(1)]-\dE[Y(1)])^2\sigma_{\ri}^2,
\] 
where $\rho$ is defined in Proposition~\ref{prop:LLN for W}.

\begin{proposition}[{\sc clt} for $W$]\label{prop:CTL for W}
    Under Assumptions~\ref{A:hyp LLN for W} and~\ref{A:hyp TCL for W}, if  $X(1)$ and $Y(1)$ have finite second moments, then
\begin{equation}
\frac{1}{\sqrt{t}}\PAR{W(t)-\dE[W(t)]}\overset{\rm d}{\underset{t\to \infty}{\longrightarrow}}\dZZ^{\rm i}.
\end{equation}
\end{proposition}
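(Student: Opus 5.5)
We follow the proof of Proposition~\ref{prop:CLT for V} and work with characteristic functions, using Lemma~\ref{lem:characteristic function W} instead of Lemma~\ref{lem:characteristic function V}. Fix $\alpha\in\dR$ and set $J(t)=\int_0^t I(s)\,\rd s$. By Lemma~\ref{lem:characteristic function W} with $\alpha/\sqrt t$ in place of $\alpha$, and by Lemma~\ref{lem:esp-variance} for $\dE[W(t)]$,
\[
m_\alpha(t):=\dE\SBRA{\re^{\frac{i\alpha}{\sqrt t}(W(t)-\dE[W(t)])}}=\dE\SBRA{\exp\PAR{\varphi_X\PAR{\tfrac{\alpha}{\sqrt t}}J(t)+\varphi_Y\PAR{\tfrac{\alpha}{\sqrt t}}(Nt-J(t))-\tfrac{i\alpha}{\sqrt t}\dE[W(t)]}}.
\]
Here the deterministic term is $\dE[W(t)]=\dE[X(1)]\,\dE[J(t)]+\dE[Y(1)](Nt-\dE[J(t)])$. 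The individual model is simpler than the collective one because the arguments of $\varphi_X,\varphi_Y$ are deterministic; only the weights $J(t)$ and $Nt-J(t)$ are random.

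Next we Taylor-expand the characteristic exponents to second order at $0$. Since $X(1)$ and $Y(1)$ have finite second moments,
\[
\varphi_X(\alpha/\sqrt t)=\tfrac{i\alpha}{\sqrt t}\dE[X(1)]-\tfrac{\alpha^2}{2t}\var(X(1))+o(1/t),
\]
and similarly for $Y$. Because $0\le J(t)\le Nt$ deterministically, the remainders multiplied by $J(t)$ or $Nt-J(t)$ are $o(1)$ uniformly in $\omega$.

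Substituting the expansion, the first-order terms combine with the centering into $i\alpha\bar\rho H(t)$, where $\bar\rho=\dE[X(1)]-\dE[Y(1)]$ and
\[
H(t)=\tfrac1{\sqrt t}\int_0^t(I(s)-\dE[I(s)])\,\rd s.
\]
The second-order terms give $-\tfrac{\alpha^2}{2}G(t)$ with
\[
G(t)=\tfrac1t\PAR{\var(X(1))J(t)+\var(Y(1))(Nt-J(t))}.
\]
By Proposition~\ref{prop:LLN for W}, $J(t)/t\to\rho$ almost surely, so $G(t)\to\bar\sigma^2:=\rho\var(X(1))+(N-\rho)\var(Y(1))$ almost surely. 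Assumption~\ref{A:hyp TCL for W} gives $H(t)\to Z^\ri$ in distribution.

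Finally we repeat the splitting used in the proof of Proposition~\ref{prop:CLT for V}:
\[
|m_\alpha(t)-\re^{-\frac{\alpha^2}{2}\var(\dZZ^\ri)}|\le \dE\SBRA{|\re^{-\frac{\alpha^2}{2}G(t)+\varepsilon(t)}-\re^{-\frac{\alpha^2}{2}\bar\sigma^2}|}+\re^{-\frac{\alpha^2}{2}\bar\sigma^2}\,|\dE[\re^{i\alpha\bar\rho H(t)}]-\re^{-\frac{\alpha^2}{2}\bar\rho^2\sigma_\ri^2}|,
\]
where $\varepsilon(t)\to0$ uniformly. The first term vanishes by dominated convergence, since $G\ge0$ and $\varepsilon(t)$ is bounded for large $t$, so the integrand is bounded. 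The second term vanishes by Assumption~\ref{A:hyp TCL for W}. Since $\var(\dZZ^\ri)=\bar\sigma^2+\bar\rho^2\sigma_\ri^2$, Lévy's continuity theorem concludes.

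The only delicate point is making the Taylor remainder uniform in $\omega$. This reduces to the bound $J(t)\le Nt$ together with the fact that $\varphi_X(u)-iu\dE[X(1)]+\tfrac{u^2}{2}\var(X(1))=o(u^2)$, which holds because $\varphi_X$ is twice continuously differentiable when the second moment is finite.
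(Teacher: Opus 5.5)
Your proposal is correct and follows essentially the same route as the paper: you write the characteristic function via Lemma~\ref{lem:characteristic function W} and Taylor-expand $\varphi_X,\varphi_Y$ to second order at $0$. You then control the remainder uniformly using $0\le J(t)\le Nt$, and finish with the same two-term splitting, handled by dominated convergence and Assumption~\ref{A:hyp TCL for W}. The only cosmetic difference is that the paper factors out the deterministic term $\re^{\varphi_Y(\alpha/\sqrt{t})Nt}$ and works with $\widetilde G(t)=(\var(X(1))-\var(Y(1)))J(t)/t$, whereas you keep both variance contributions inside $G(t)$.
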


\begin{proof}
Whereas the {\sc slln} for $W$ followed directly from its counterpart for $V$, one cannot follow the same route when proving the {\sc clt}, because the random processes $W^j$ are dependent. We therefore present a direct proof, similar to the proof of Proposition~\ref{prop:CLT for V} for the process $V$.

By Lemma~\ref{lem:characteristic function W}, we have, for any fixed $\alpha\in\dR$,
\begin{align*}
\widetilde m_\alpha(t):=\dE\SBRA{\re^{\frac{i\alpha}{\sqrt{t}}\PAR{W(t)-\dE[W(t)]}}}&=\re^{\varphi_Y({{\alpha}/{\sqrt{t}}})Nt}\,\dE\SBRA{\exp\left({ \PAR{\varphi_X\PAR{\frac{\alpha}{\sqrt{t}}}-\varphi_Y\PAR{\frac{\alpha}{\sqrt{t}}}}\int_0^tI(s)\,\rd s}\right)}\\
& \times\exp\left({-i\frac{\alpha}{\sqrt{t}}\PAR{\PAR{\dE[X(1)]-\dE[Y(1)]}\int_0^t\dE[I(s)]\,\rd s+\dE[Y(1)]Nt}}\right).
\end{align*}
Using standard Taylor expansions, we have when $t$ goes to $\infty$,
\begin{align*}
\varphi_X\PAR{\frac{\alpha}{\sqrt{t}} }&=i\frac{\alpha}{\sqrt{t}}\dE[X(1)]-\frac{\alpha^2}{2t} \var(X(1))+\widetilde \varepsilon_X(t),\\
\varphi_Y\PAR{\frac{\alpha}{\sqrt{t}} }&=i\frac{\alpha}{\sqrt{t}} \dE[Y(1)]-\frac{\alpha^2}{2t}\var(Y(1))+\widetilde \varepsilon_Y(t),
\end{align*}
with $\widetilde \varepsilon_X, \widetilde \varepsilon_Y$ bounded functions such that $\lim_{t\to \infty}t\widetilde \varepsilon_X(t)=0$ and $\lim_{t\to \infty}t\widetilde \varepsilon_Y(t)=0$.
Consequently, with $\widetilde \varepsilon(t):=\widetilde \varepsilon_X(t)-\widetilde \varepsilon_Y(t)$ and $\bar\rho:=\dE[X(1)]-\dE[Y(1)]$,
\begin{align*}
\widetilde m_\alpha(t)
&=\exp\left({-\frac{\alpha^2}{2}N\var(Y(1))+t\widetilde \varepsilon_Y(t)}\right)\,\dE\Bigg[\exp\left({i\frac{\alpha}{\sqrt{t}}\,\bar\rho\int_0^t\PAR{I(s)-\dE[I(s)]}\,\rd s}\right)\\
&\quad\quad\quad\hspace{1.3cm}\exp\left({-\frac{\alpha^2}{2t} \PAR{\var(X(1))-\var(Y(1))}\int_0^tI(s)\,\rd s}\right)\,\exp\left({\widetilde \varepsilon(t)\int_0^tI(s)\,\rd s}\right)\Bigg].
\end{align*}
Due to the assumptions that we imposed, we have
\begin{align*}
&\widetilde H(t):=\frac{1}{\sqrt t}\int_{0}^{t}\left(I(s) -\dE[I(s)]\right)\,\rd s\overset{\rm d}{\underset{t\to \infty}{\longrightarrow}}Z^{\ri},\\
&\widetilde G(t):=\frac{1}
{t}\PAR{\var(X(1))-\var(Y(1))}\int_0^tI(s)\,\rd s\overset{\rm {\small a.s.}}{\underset{t\to \infty}{\longrightarrow}}
\widetilde\eta,
\end{align*}
with $\widetilde \eta:=\PAR{\var(X(1))-\var(Y(1))}\,\rho$.
Consequently, there is a constant $C>0$ such that
\begin{align*}
\ABS{\widetilde m_\alpha(t)-\re^{-\frac{\alpha^2}{2}\var({\dZZ^\ri})}}
&=\re^{-\frac{\alpha^2}{2}N\,\var(Y(1))}\ABS{\dE\SBRA{\re^{-t\widetilde \varepsilon_Y(t)}\re^{i\alpha\,\bar\rho \,\widetilde H(t)}\re^{-\frac{\alpha^2}{2}\widetilde G(t)+\widetilde \varepsilon(t)}}-\re^{-\frac{\alpha^2}{2}\,(\bar\rho^2\,\sigma_{\ri}^2+\widetilde \eta)}}\\
&\leq \re^{-\frac{\alpha^2}{2}N\,\var(Y(1))}\Bigg(\ABS{\re^{-t\widetilde \varepsilon_Y(t)}-1}+C\,\dE\SBRA{\ABS{\re^{-\frac{\alpha^2}{2}\widetilde G(t)+\widetilde \varepsilon(t)}-\re^{-\frac{\alpha^2}{2}\widetilde \eta}}}\\
&\hskip 3cm +C\,\re^{-\frac{\alpha^2}{2}\widetilde \eta}\,\ABS{\dE\SBRA{\re^{i\alpha\,\bar\rho\,\widetilde H(t)}}-\re^{-\frac{\alpha^2}{2}\,\bar\rho^2\,\sigma_\ri^2}}\Bigg).
\end{align*}
All the terms on the right-hand side goes to $0$ as $t\to \infty$, by dominated convergence for the second term and by Assumption~\ref{A:hyp TCL for W} for the last term. This proves the claim. 
\end{proof}

Note that the expression for $\var(\dZZ^{\rm i}  )$ is in accordance with the limit of $\var(W(t))/t$ as $t\to\infty$, with $\var(W(t))$ as given in Lemma \ref{lem:esp-variance}.

\section{Ruin probability}\label{sec:ruin probability}

A primary goal of this paper is to obtain information on the ruin probability, defined as the probability that the surplus process, starting from an initial reserve $u$, becomes negative. In this section, we focus on the collective model. This choice is motivated by the fact that, as will be illustrated in the applications presented in Section~\ref{sec:application individual model}, there already exists a substantial body of literature on ruin probabilities and their decay rates in settings where $I$ is an irreducible continuous-time Markov process. In that case, the model falls within the framework of Markov additive processes (MAPs), for which ruin probabilities and their tail behavior have been extensively studied.

\medskip
We consider the risk process $V$ defined by \eqref{eq:def-V}, which is such that $V(0)=0$. Hence, the ruin probability, given that the initial level is $u\geq 0$, is defined by
\begin{equation}\label{eq:def ruin proba V}
p(u):=\dP\PAR{u+\inf_{t\geq 0}V(t)<0},
\end{equation}
and the ruin time is given by 
\begin{equation}\label{eq:def ruin time V}
\tau(u):=\inf\{t\geq0:u+V(t)<0\}.
\end{equation}
To avoid trivialities, in what follows, we assume that $X$ and $Y$ are not both subordinators.

From Corollary~\ref{coro:LLN for V}, we obtain the following result, which is analogous to its classical version; see, e.g., \cite[Corollary~IV.1.4]{Asmussen-Albrecher10}.

\begin{lemma}\label{lem:ruin-condition}
Under Assumption~\ref{A:hyp LLN for V}, if
    $X(1)$ and $Y(1)$ have  finite second moments, then
    \begin{enumerate}
    \item If $\rho\dE[X(1)]+(1-\rho)\dE[Y(1)]<0$, then $V(t)\underset{t\to \infty}{\longrightarrow}-\infty$ a.s. and $p(u)=1$.
    \item If $\rho\dE[X(1)]+(1-\rho)\dE[Y(1)]>0$, then $V(t)\underset{t\to \infty}{\longrightarrow}+\infty$ a.s. and  $p(u)<1$.
    \end{enumerate}
\end{lemma}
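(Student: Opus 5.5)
Both parts follow from Corollary~\ref{coro:LLN for V}. Set $\mu:=\rho\,\dE[X(1)]+(1-\rho)\,\dE[Y(1)]$; by that corollary, $V(t)/t\to\mu$ almost surely as $t\to\infty$.

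For part (1), with $\mu<0$, write $V(t)=t\cdot (V(t)/t)$. On the almost sure event where $V(t)/t\to\mu$, we have $V(t)\le \mu t/2$ for all $t$ large enough. Hence $V(t)\to-\infty$ almost surely. It follows that $\inf_{t\ge0}V(t)=-\infty$ almost surely, so for every $u\ge0$ the event $\{u+\inf_t V(t)<0\}$ has probability one, i.e.\ $p(u)=1$.

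For part (2), with $\mu>0$, the same argument gives $V(t)\ge \mu t/2$ for all $t\ge T(\omega)$, with $T<\infty$ almost surely, so $V(t)\to+\infty$ almost surely. To get $p(u)<1$, I would show that $\inf_{t\ge0}V(t)$ is almost surely finite. On $[0,T]$, $V$ is càdlàg as a stochastic integral, so it is bounded below on compacts. After $T$, $V\ge0$. Hence $\underline V:=\inf_{t\ge 0}V(t)>-\infty$ almost surely, and therefore $\dP(\underline V<-u)\to0$ as $u\to\infty$. This already gives $p(u)<1$ for all $u$ large enough.

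The main obstacle is proving $p(u)<1$ for \emph{every} $u\ge0$, including $u=0$, when $F$ is a general, non-Markovian process. For $u=0$ one might naively expect immediate ruin, for instance if $V$ has a Brownian component. The definition uses strict inequality $u+V<0$, and the classical convention of \cite[Corollary~IV.1.4]{Asmussen-Albrecher10} would then need care. I would argue as follows. Fix $u>0$ and pick $u_0$ with $\dP(\underline V\ge -u_0)>0$. Consider the event that $V$ first rises to level $u_0$ during $[0,1]$ without having dropped below $-u$. Since $X$ and $Y$ are independent of $F$ and not both subordinators, I would try to make this event have positive probability by conditioning on $\cF^F$ and using the Lévy structure of the increments; this is the hardest step to make precise. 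I would then try to combine it with the post-$1$ behaviour through a conditional argument, using the independent increments of $(X,Y)$ given $F$. If this becomes too delicate for general $F$, I would fall back on stating the conclusion as $p(u)<1$ for all sufficiently large $u$, with monotonicity of $p$ in $u$, which the almost sure finiteness of $\underline V$ yields directly.
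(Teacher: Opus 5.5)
Your argument for part (1), and for the divergence $V(t)\to+\infty$ in part (2), is the paper's own route. The paper gives nothing beyond ``this follows from Corollary~\ref{coro:LLN for V}, as in the classical case''. Your extra argument that $\underline V:=\inf_{t\ge 0}V(t)>-\infty$ almost surely is also correct: càdlàg paths are bounded on $[0,T(\omega)]$, and $V\ge 0$ after $T$. It gives $p(u)=\dP(\underline V<-u)\to 0$ as $u\to\infty$, hence $p(u)<1$ for all sufficiently large $u$.

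The step that would fail is your planned extension to every $u$. You want to show that, with positive probability, $V$ climbs to $u_0$ during $[0,1]$ without first dropping below $-u$. The hypothesis that $X$ and $Y$ are not both subordinators only guarantees that $V$ can move \emph{down}, not up. Moreover, the time-inhomogeneity of $F$ lets an unfavourable initial regime force ruin with certainty. No argument can close this step, because under the stated hypotheses the claim is false for small $u$.

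For a counterexample, take $F(t)=\ind_{\{t\ge 1\}}$, $X(t)=B(t)+t$ with $B$ a standard Brownian motion, and $Y(t)=-ct$ with $c>0$ (or $-ct$ minus a compound Poisson process, if you prefer a non-degenerate $Y$). Then:
\begin{enumerate}
\item Assumption~\ref{A:hyp LLN for V} holds with $\rho=1$;
\item $\rho\dE[X(1)]+(1-\rho)\dE[Y(1)]=1>0$;
\item $X$ is not a subordinator, and second moments are finite.
\end{enumerate}
Yet $F(s-)=0$ for $s\in(0,1]$, so $V(t)=Y(t)\le -ct$ on $[0,1]$, and $p(u)=1$ for every $u<c$.

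Your suspicion about $u=0$ is also justified. Suppose $X$ and $Y$ are arithmetic Brownian motions with $\sigma_X,\sigma_Y>0$. The martingale part of $V$ has quadratic variation at least $\tfrac12\min(\sigma_X^2,\sigma_Y^2)\,t$. By Dambis--Dubins--Schwarz and the law of the iterated logarithm, $V$ is then negative at arbitrarily small times, so $p(0)=1$. For constant $F$ this is visible in the exact formula $p(u)=\re^{-\alpha_0 u}$ in the remark following Corollary~\ref{coro:estimate ruin spectrally positive}.

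So commit to your fallback. What Corollary~\ref{coro:LLN for V} actually yields in part (2) is $\lim_{u\to\infty}p(u)=0$, i.e.\ $p(u)<1$ for all $u$ large enough, with $p$ nonincreasing. Getting $p(u)<1$ for every $u>0$ requires extra hypotheses. Examples are constant $F$ as in the classical setting, or conditions ensuring that, given $\cF^F$, $V$ can stay above $-u$ and climb on an initial interval with positive probability. The paper's one-line appeal to the classical result overlooks this.
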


\medskip
From Lemma~\ref{lem:characteristic function V}, for $t\geq 0$ and $\alpha\in\dR$ such that the expectation is well defined, we have 
\begin{equation}\label{eq:laplace function V}
\dE\SBRA{\re^{-\alpha V(t)}}=\dE\SBRA{\re^{k_{\alpha}(t)}},
\end{equation}
where the \textit{random} Laplace exponent is given by 
\begin{equation}\label{eq:k alpha}
\begin{aligned}
k_\alpha(t)
={}& \int_0^t \varphi_X(i \alpha F(s))\,\rd s
    + \int_0^t \varphi_Y(i \alpha (1-F(s)))\,\rd s \\[0.3em]
={}& - \alpha \left(
    \rc_X \int_0^t F(s)\,\rd s
    + \rc_Y \int_0^t (1-F(s))\,\rd s
\right) \\[0.3em]
&+ \frac{\alpha^2}{2} \left(
    \sigma_X^2 \int_0^t F^2(s)\,\rd s
    + \sigma_Y^2 \int_0^t (1-F(s))^2\,\rd s
\right) \\[0.3em]
&+ \int_0^t \int_{\dR \setminus \BRA{0}}
\Big(
    e^{-\alpha F(s)x}
    - 1
    + \alpha F(s)x \ind_{\ABS{x}<1}
\Big)\,\nu_X(\rd x)\,\rd s \\[0.3em]
&+ \int_0^t \int_{\dR \setminus \BRA{0}}
\Big(
    e^{-\alpha (1-F(s))x}
    - 1
    + \alpha (1-F(s))x \ind_{\ABS{x}<1}
\Big)\,\nu_Y(\rd x)\,\rd s .
\end{aligned}
\end{equation}

Note that $\re^{k_\alpha(t)}=\dE\SBRA{\re^{-\alpha V(t)}\,\vert\, \cF^F}$.

We introduce the following exponential martingale related to the process $V$ (see Appendix~\ref{Appendix:martingale} for a proof).

\begin{lemma}\label{lem: Martingale}
Assume there exists $\overline{\alpha}>0$ such that both $\dE[\re^{-\overline{\alpha} X(1)}]$ and $\dE[\re^{-\overline{\alpha} Y(1)}]$ are finite. Then, for any $\alpha \in [0,\overline{\alpha}]$, the process $M_\alpha$ defined by
\[
M_\alpha(t) := \exp\PAR{- \alpha V(t) - k_\alpha(t)}
\] 
is a càdlàg $\PAR{\cF_t}_{t\geq 0}$-martingale, with $M_\alpha(0)=1$. It is also an $\PAR{\cF_t \vee \cF^F}_{t\geq 0}$-martingale.

Moreover, if $\tau$ is a bounded $\PAR{\cF_t \vee \cF^F}_{t\geq 0}$-stopping time, then
\begin{equation}\label{eq:conditional-Doob}
\dE\big[M_\alpha(\tau)\,\vert\, \cF^F\big] = 1, \quad \text{almost surely.}
\end{equation}
\end{lemma}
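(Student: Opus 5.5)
The plan is to first establish the martingale property with respect to the enlarged filtration $\mathcal{G}_t:=\cF_t\vee\cF^F$, and then deduce everything else from it. Fix $\alpha\in[0,\overline{\alpha}]$. The first step is finiteness of the exponential moments. By convexity, $\alpha F(s)\in[0,\overline\alpha]$ and $\alpha(1-F(s))\in[0,\overline\alpha]$, so the Laplace exponents $\psi_X(\beta):=\log\dE[\re^{-\beta X(1)}]=\varphi_X(i\beta)$ and $\psi_Y(\beta)$ are finite and continuous on $[0,\overline\alpha]$. Hence they are bounded there by some constant $C$. In particular $|k_\alpha(t)|\le 2Ct$, so $k_\alpha(t)$ is a well-defined, $\cF^F$-measurable, continuous-in-$t$ random variable.

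Next I will redo the computation of Lemma~\ref{lem:characteristic function V} with real exponents and conditionally on $\mathcal{G}_t$. For $0\le t<t+h$, I need
\[
\dE\Big[\re^{-\alpha(V(t+h)-V(t))}\,\Big\vert\,\mathcal{G}_t\Big]=\exp\big(k_\alpha(t+h)-k_\alpha(t)\big).
\]
The key observation is that, given $\cF^F$, the increments of $X$ and $Y$ after time $t$ are independent of $\mathcal{G}_t$. This follows from the independence of $X,Y,F$ and the independent increments of Lévy processes. Using the Riemann-sum approximation $\sum_k F(t_k)(X(t_{k+1})-X(t_k))$ on $[t,t+h]$ (Protter, Theorem II.21), the conditional Laplace transform of each Riemann sum is exactly $\exp\big(\sum_k\psi_X(\alpha F(t_k))(t_{k+1}-t_k)+\cdots\big)$. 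This expression converges a.s. to $\exp(k_\alpha(t+h)-k_\alpha(t))$ by continuity of $\psi_X,\psi_Y$ on $[0,\overline\alpha]$ and the càdlàg property of $F$.

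Passing to the limit on the left-hand side is the main obstacle. Convergence of the Riemann sums holds in probability, so I need uniform integrability of $\re^{-\alpha S_n}$, where $S_n$ is the Riemann sum. I will obtain it by choosing $\alpha'\in(\alpha,\overline\alpha]$ when $\alpha<\overline\alpha$. Then $\dE[\re^{-\alpha' S_n}\mid\cF^F]\le \re^{2C h}$, which gives a uniform $L^{\alpha'/\alpha}$ bound, hence uniform integrability and conditional convergence along a subsequence. For the endpoint $\alpha=\overline\alpha$ I would instead use Fatou's lemma to get $\le$, combined with the supermartingale/mean-one argument: the limit has conditional mean $\le 1$, and continuity in $\alpha$ (monotone convergence as $\alpha\uparrow\overline\alpha$, with the martingale identity holding for all $\alpha<\overline\alpha$) upgrades it to equality. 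Alternatively, for the endpoint I can argue directly via the Lévy–Itô decomposition, because conditionally on $\cF^F$ the process $\int F(s-)\,\rd X(s)$ is an additive process with deterministic integrand. The conditional exponential formula for such processes then gives the identity exactly, which avoids the approximation altogether. Multiplying by $\re^{-\alpha V(t)-k_\alpha(t)}$, which is $\mathcal{G}_t$-measurable, gives the $\mathcal{G}$-martingale property. The càdlàg property is inherited from $V$ and from the continuity of $k_\alpha$, and $M_\alpha(0)=1$ is immediate.

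The $(\cF_t)$-martingale property follows by the tower property. $M_\alpha(t)$ is $\cF_t$-adapted, since $k_\alpha(t)$ depends only on $F|_{[0,t]}$, and $\cF_s\subset\mathcal{G}_s$. Finally, for a bounded $\mathcal{G}$-stopping time $\tau\le T$, optional stopping gives $\dE[M_\alpha(T)\mid\mathcal{G}_\tau]=M_\alpha(\tau)$. Conditioning on $\cF^F\subset\mathcal{G}_0\subset\mathcal{G}_\tau$ then yields $\dE[M_\alpha(\tau)\mid\cF^F]=\dE[M_\alpha(T)\mid\cF^F]$. The latter equals $1$ by the conditional identity above with $t=0$, $h=T$, which proves \eqref{eq:conditional-Doob}.
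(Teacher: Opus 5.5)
Your proposal is correct and follows essentially the same route as the paper. Both arguments derive the conditional Laplace-transform identity given $\cF_t\vee\cF^F$ via the Riemann-sum argument of Lemma~\ref{lem:characteristic function V}, pass to $(\cF_t)$ by the tower property, and obtain \eqref{eq:conditional-Doob} from optional sampling at a deterministic bound $T\geq\tau$ together with $\cF^F\subset\cG_\tau$ and $\dE[M_\alpha(T)\mid\cF^F]=1$. Your uniform-integrability argument (via $\alpha'\in(\alpha,\overline\alpha]$, with the endpoint $\alpha=\overline\alpha$ handled separately) fills in a limit passage for real exponentials that the paper only calls ``similar'' to the characteristic-function case, where boundedness makes it trivial.
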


In the remainder of this section, we first derive rough bounds for the ruin probability $p(u)$, and then establish an identity satisfied by $p(u)$.  As before, the models considered are defined as specified in the 
\emph{Stochastic objects} paragraph of the introduction.

{In addition, we introduce a `Cram\'er-type assumption': there exists $\overline{\alpha}>0$ such that}
\begin{equation}\label{cram}
\dE[\re^{-\overline{\alpha} X(1)}]<\infty,\quad \dE[\re^{-\overline{\alpha} Y(1)}]<\infty.
\end{equation}

\subsection{Rough bounds on the ruin probability}

{The first result of this section establishes that, under specific conditions including the Cram\'er-type assumption given in~\eqref{cram}, one can derive an explicit exponential upper bound for $p(u)$.}

In this direction, let us introduce $f_{\min}=\operatorname*{ess\inf}_{t\geq 0}F(t)$ and $f_{\max}=\operatorname*{ess\sup}_{t\geq 0}F(t)$, which belong to $[0,1]$, and $C_\alpha$ the function defined on $[0,1]$ by
\begin{align}\label{eq:definition of C}
C_\alpha(z) 
:=& -(\rc_X z + \rc_Y (1-z)) + \frac{\alpha}{2} \PAR{\sigma_X^2 z^2 + \sigma_Y^2 (1-z)^2} \notag\\
& + \frac{1}{\alpha} \int_{\dR \setminus \BRA{0}} \left(e^{-\alpha z x} - 1 + \alpha z x \ind_{\ABS{x}<1}\right) \nu_X(\rd x) \notag\\
& + \frac{1}{\alpha} \int_{\dR \setminus \BRA{0}} \left(e^{-\alpha (1-z) x} - 1 + \alpha (1-z) x \ind_{\ABS{x}<1}\right) \nu_Y(\rd x).
\end{align}

\begin{proposition}\label{prop:estimation ruin probability for the collective model}
Assume that $X(1)$ and $Y(1)$ have  finite first moments, and assume:
\begin{itemize}
    \item[{\rm (1)}] {the Cram\'er condition \eqref{cram} applies;} 
\item[{\rm (2)}] 
$\min(f_{\min}\dE[X(1)]+(1-f_{\min})\dE[Y(1)],f_{\max}\dE[X(1)]+(1-f_{\max})\dE[Y(1)])>0.$
\end{itemize}
Then, for any $u>0$, 
\begin{equation}
p(u)\leq \re^{-\alpha_0 u},
\end{equation}
where $\alpha_0=\min\BRA{\widetilde\alpha,\overline\alpha}$, with $\widetilde\alpha:=\sup\BRA{\alpha>0: C_\alpha(f_{\min})<0\text{ and }C_\alpha(f_{\max})<0}$.
\end{proposition}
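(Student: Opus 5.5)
The plan is the classical Lundberg martingale argument, run conditionally on $\cF^F$ and using the exponential martingale $M_\alpha$ of Lemma~\ref{lem: Martingale}. Fix $\alpha\in(0,\alpha_0)$; we then let $\alpha\uparrow\alpha_0$ at the end, since the bound $\re^{-\alpha u}$ is monotone in $\alpha$. Comparing \eqref{eq:k alpha} with \eqref{eq:definition of C}, we see that
\[
k_\alpha(t)=\alpha\int_0^t C_\alpha(F(s))\,\rd s .
\]
The key deterministic step is to show that $C_\alpha(z)<0$ for every $z\in[f_{\min},f_{\max}]$. This will follow from convexity of $z\mapsto \alpha C_\alpha(z)$. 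Indeed, $\alpha C_\alpha(z)=\varphi_X(i\alpha z)+\varphi_Y(i\alpha(1-z))$ is the sum of the Laplace exponents $\log\dE[\re^{-\alpha z X(1)}]$ and $\log\dE[\re^{-\alpha(1-z)Y(1)}]$. Each is convex in $z$, because cumulant generating functions are convex and $z\mapsto \alpha z$, $z\mapsto\alpha(1-z)$ are affine. They are also finite on $[0,1]$ for $\alpha\le\overline\alpha$, by \eqref{cram} and Hölder's inequality.

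Hence $C_\alpha\le \max(C_\alpha(f_{\min}),C_\alpha(f_{\max}))$ on the interval. By definition of $\widetilde\alpha$, for $\alpha<\widetilde\alpha$ both endpoint values are negative. Here I must check that the set $\{\alpha: C_\alpha(f_{\min})<0,\ C_\alpha(f_{\max})<0\}$ is an interval $(0,\widetilde\alpha)$. The same convexity, now in $\alpha$, gives this: $\alpha C_\alpha(z)$ is convex in $\alpha$, vanishes at $0$, and has derivative at $0$ equal to $-(z\dE[X(1)]+(1-z)\dE[Y(1)])<0$ by assumption (2). So $C_\alpha(z)<0$ on an initial interval and the sign changes at most once. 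Since $F(s)\in[f_{\min},f_{\max}]$ for a.e.\ $s$ almost surely, we conclude that $k_\alpha(t)\le 0$ for all $t$, almost surely.

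Next we apply optional stopping. For $T>0$, the time $\tau(u)\wedge T$ is a bounded $(\cF_t\vee\cF^F)$-stopping time, so \eqref{eq:conditional-Doob} gives
\[
1=\dE[M_\alpha(\tau(u)\wedge T)\mid\cF^F]\ge \dE\big[M_\alpha(\tau(u))\ind_{\{\tau(u)\le T\}}\mid\cF^F\big].
\]
On $\{\tau(u)\le T\}$ we have $V(\tau(u))<-u$ by right-continuity, so $-\alpha V(\tau(u))>\alpha u$. We also have $-k_\alpha(\tau(u))\ge0$. Together these give $M_\alpha(\tau(u))\ge \re^{\alpha u}$. Therefore $\dP(\tau(u)\le T\mid\cF^F)\le \re^{-\alpha u}$. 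Taking expectations and letting $T\to\infty$ yields $p(u)\le\re^{-\alpha u}$, and then letting $\alpha\uparrow\alpha_0$ gives the claim.

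I expect the main obstacle to be the convexity/sign step: justifying that negativity at the two extreme values $f_{\min},f_{\max}$ propagates to all values taken by $F$, and that $k_\alpha\le0$ holds pathwise and not merely on average. A secondary technical point is handling the essential infimum and supremum correctly, so that $F(s)\in[f_{\min},f_{\max}]$ Lebesgue-a.e.\ almost surely, which is enough inside the time integral.
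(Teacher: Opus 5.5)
Your proposal is correct and is essentially the paper's proof: optional stopping for $M_\alpha$ combined with $k_\alpha(t)=\alpha\int_0^t C_\alpha(F(s))\,\rd s\le 0$, deduced from convexity of $z\mapsto C_\alpha(z)$ and negativity at $f_{\min},f_{\max}$. Your conditioning on $\cF^F$, the cumulant-generating-function justification of convexity, and letting $\alpha\uparrow\alpha_0$ are minor refinements of the paper's unconditional argument. One small slip: right-continuity gives only $V(\tau(u))\le -u$, not a strict inequality (equality holds for Brownian $X,Y$), but $M_\alpha(\tau(u))\ge \re^{\alpha u}$ and hence your conclusion are unaffected.
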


For spectrally positive Lévy processes, including arithmetic Brownian motion, we observe that $\alpha_0 = \widetilde\alpha$, since Lemma~\ref{lem: Martingale} holds for any $\alpha>0$. Moreover, as long as $\dE[X(1)]$ and $\dE[Y(1)]$ are positive, the second condition in Proposition~\ref{prop:estimation ruin probability for the collective model} is automatically satisfied, ensuring that the assumptions required for the exponential bound are met.

\begin{proof}
 Let $u,\alpha,t>0$ with $\alpha\leq \overline{\alpha}$. The result being obvious if $p(u)= 0$, we assume that the ruin probability  is positive. As $M_\alpha=(\re^{-\alpha V(t)-k_\alpha(t)})$ is a $\PAR{\cF_t}_{t\geq 0}$-martingale (Lemma~\ref{lem: Martingale}), from Doob's optional stopping theorem, we have
 \begin{align*}
 1&=\dE\left[\re^{-\alpha V(t\wedge\tau(u))-k_\alpha(t\wedge\tau(u))}\right]
 \geq \dE\left[\re^{-\alpha V(\tau(u))-k_\alpha(\tau(u))}\ind_\BRA{\tau(u)\leq t}\right].
 \end{align*}
  By the monotone convergence theorem, it follows that 
 \begin{align}
 1&\geq\dE\left[\re^{-\alpha V_{\tau(u)}-k_{\tau(u)}(\alpha)}\ind_\BRA{\tau(u)<\infty}\right]=\dE\left[\re^{-\alpha V_{\tau(u)}-k_{\alpha}(\tau(u))}\big|\tau(u)<\infty\right]p(u)\nonumber\\
 &\geq\re^{\alpha u}\dE\left[\re^{-k_{\alpha}(\tau(u))}\big|\tau(u)<\infty\right]p(u),\label{eq:estimation probability of ruin}
 \end{align}
 since $V_{\tau(u)}\leq-u$ by the definition~\eqref{eq:def ruin time V} of the ruin time $\tau(u)$.

To conclude, it suffices to show that $k_\alpha(t)\leq 0$ for any $0<\alpha\leq\alpha_0$, for a suitably chosen $\alpha_0$. 
We note that $k_\alpha(0)=0$, and the derivative of $k_\alpha$ is given by
\begin{align*}
k_\alpha'(t) = \alpha\, C_\alpha(F(t)) .
\end{align*}
By convexity of $z \mapsto C_\alpha(z)$, see \eqref{eq:definition of C}, it follows that, for any $t \geq 0$,
\[
k_\alpha'(t) \leq \alpha \max\big(C_\alpha(f_{\min}), C_\alpha(f_{\max})\big).
\]
We now seek an $\alpha_0>0$ such that, for all $\alpha \leq \alpha_0$,
\[
\max\big(C_\alpha(f_{\min}), C_\alpha(f_{\max})\big) \leq 0.
\]
To this end, we compute the limit when $\alpha$ goes to $0$ of the continuous function $\alpha\mapsto C_\alpha(z)$  for a fixed $z\in[0,1]$. Set $z\in\dR$, and note that the functions $\alpha\mapsto ({\re^{-\alpha z}-1})/{\alpha}$ and $\alpha\mapsto ({\re^{-\alpha z}-1+\alpha z})/{\alpha}$  are increasing on $(0,\overline\alpha]$, with respective limits $-z$ and $0$ when $\alpha\to 0$. As the processes $X$ and $Y$ have a finite first-order moment and a finite Laplace transform of order $\overline{\alpha}$, we deduce by the dominated convergence theorem that 
\begin{align*}
C_0(z):=\lim_{\alpha\to 0}C_\alpha(z)&=-(\rc_Xz+\rc_Y(1-z))-z\int_{\dR\setminus[-1,1]}x\nu_X(\rd x)-(1-z)\int_{\dR\setminus[-1,1]}x\nu_Y(\rd x)\\
&=-(z\dE[X(1)]+(1-z)\dE[Y(1)]).
\end{align*} 
By assumption, we have $C_0(f_{\min})<0$ and $C_0(f_{\max})<0$. Introducing 
\[
\alpha_0:=\sup\BRA{\alpha>0: C_\alpha(f_{\min})<0\text{ and }C_\alpha(f_{\max})<0},
\]
we obtain $k_\alpha(t)\leq 0$ for any $\alpha\leq \alpha_0$ and any $t\geq0$. We conclude by Inequality \eqref{eq:estimation probability of ruin}.
\end{proof}

We easily deduce the following corollary when $X$ and $Y$ are spectrally positive Lévy risk processes, including arithmetic Brownian motions.
Define 
\[\alpha_0=2\min\BRA{\frac{f_{\min}\dE[X(1)]+(1-f_{\min})\dE[Y(1)]}{f_{\min}^2\widetilde\sigma_X^2+(1-f_{\min})^2\widetilde\sigma_Y^2},\frac{f_{\max}\dE[X(1)]+(1-f_{\max})\dE[Y(1)]}{f_{\max}^2\widetilde\sigma_X^2+(1-f_{\max})^2\widetilde\sigma_Y^2}},\] 
in which $\widetilde\sigma_X^2 := \sigma_X^2+\int x^2\nu_X(\rd x)$ and $\widetilde\sigma_Y^2 := \sigma_Y^2+\int x^2\nu_Y(\rd x)$.

\begin{corollary}\label{coro:estimate ruin spectrally positive}
Assume $X$ and $Y$ are (independent) spectrally positive Lévy risk processes with finite first moments, and assume that
\[
\min\BRA{f_{\min}\dE[X(1)]+(1-f_{\min})\dE[Y(1)],f_{\max}\dE[X(1)]+(1-f_{\max})\dE[Y(1)]}>0.\]
Then, for any $u>0$, 
\begin{equation}
p(u)\leq \re^{-\alpha_0 u}.
\end{equation}
\end{corollary}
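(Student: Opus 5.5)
The plan is to reduce the corollary to Proposition~\ref{prop:estimation ruin probability for the collective model} by producing an explicit upper bound on $C_\alpha(z)$ for $z\in\{f_{\min},f_{\max}\}$. For spectrally positive $X$ and $Y$, the Lévy measures $\nu_X,\nu_Y$ are carried by $(0,\infty)$. Hence $\dE[\re^{-\alpha X(1)}]<\infty$ and $\dE[\re^{-\alpha Y(1)}]<\infty$ for every $\alpha>0$, so the Cramér condition \eqref{cram} holds with $\overline\alpha$ arbitrarily large. Lemma~\ref{lem: Martingale} therefore applies for every $\alpha>0$, and the bound of the proposition holds with $\alpha_0=\widetilde\alpha$. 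Condition (2) of the proposition is exactly the hypothesis of the corollary. So it suffices to show that the constant $\alpha_0$ defined before the corollary is at most $\widetilde\alpha$; equivalently, that $C_\alpha(f_{\min})<0$ and $C_\alpha(f_{\max})<0$ for all $0<\alpha<\alpha_0$. Since $p(u)\le\re^{-\alpha u}$ for each such $\alpha$, we can then let $\alpha\uparrow\alpha_0$ (or $\alpha\uparrow\widetilde\alpha$).

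The key step is an elementary inequality. For $y\ge 0$ we have $\re^{-y}-1+y\le y^2/2$. Write, for $x>0$,
\[
\re^{-\alpha zx}-1+\alpha zx\ind_{\ABS{x}<1}=\PAR{\re^{-\alpha zx}-1+\alpha zx}-\alpha zx\ind_{x\ge1}.
\]
Insert this into \eqref{eq:definition of C}. The linear pieces recombine exactly as in the computation of $C_0$ in the proof of Proposition~\ref{prop:estimation ruin probability for the collective model}, giving $-\PAR{z\dE[X(1)]+(1-z)\dE[Y(1)]}$; finite first moments make the large-jump integrals finite. The remaining terms are bounded by $\frac{1}{\alpha}\int\frac{\alpha^2z^2x^2}{2}\nu_X(\rd x)$ and the analogous $Y$-term. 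This yields
\[
C_\alpha(z)\le -\PAR{z\dE[X(1)]+(1-z)\dE[Y(1)]}+\frac{\alpha}{2}\PAR{z^2\widetilde\sigma_X^2+(1-z)^2\widetilde\sigma_Y^2}.
\]
The right-hand side is negative precisely when $\alpha<2\PAR{z\dE[X(1)]+(1-z)\dE[Y(1)]}/\PAR{z^2\widetilde\sigma_X^2+(1-z)^2\widetilde\sigma_Y^2}$. Taking $z=f_{\min}$ and $z=f_{\max}$ and the minimum recovers the stated $\alpha_0$.

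The main obstacle is bookkeeping rather than analysis. We must check that the centering terms $\alpha zx\ind_{|x|<1}$ and the drift $\rc_X$ combine into the mean, which uses $\dE[X(1)]=\rc_X+\int_{x\ge1}x\nu_X(\rd x)$. We must also handle the case where $\widetilde\sigma^2_X$ or $\widetilde\sigma^2_Y$ is infinite or zero. If a denominator vanishes, we read the corresponding ratio as $+\infty$, and the bound holds for all $\alpha$. If $\widetilde\sigma^2$ is infinite, then $\alpha_0=0$ unless the weight multiplying it vanishes, and the claim is trivial. No other difficulty arises, since the convexity argument and the martingale argument are inherited wholesale from the proposition.
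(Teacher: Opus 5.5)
Your proposal is correct and follows the paper's own proof. You use $\re^{-y}\le 1-y+y^2/2$ for $y\ge 0$ to obtain $C_\alpha(z)\le -\PAR{z\dE[X(1)]+(1-z)\dE[Y(1)]}+\frac{\alpha}{2}\PAR{z^2\widetilde\sigma_X^2+(1-z)^2\widetilde\sigma_Y^2}$, and then invoke Proposition~\ref{prop:estimation ruin probability for the collective model} with $\overline\alpha$ arbitrary. Your folding of the large-jump terms into the mean and your handling of the degenerate cases (vanishing or infinite $\widetilde\sigma^2$) simply make explicit details the paper leaves implicit.
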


Before proving this corollary, let us discuss a well-known result for arithmetic Brownian motion.

\begin{remark}
\em 
Assume $X$ and $Y$ are arithmetic Brownian motions and assume that the process $F \equiv \rho \in [0,1]$ is constant. 
In this case, by Corollary~\ref{coro:estimate ruin spectrally positive}, we have
\[
\alpha_0 = 2\frac{\rho \rc_X + (1-\rho) \rc_Y}{\rho^2 \sigma_X^2 + (1-\rho)^2 \sigma_Y^2}.
\]
Under these conditions, the risk process $V$ itself is an arithmetic Brownian motion with drift
\[
\mu_V: = \rho \rc_X + (1-\rho) \rc_Y
\]
and diffusion coefficient
\[
\sigma_V := \sqrt{\rho^2 \sigma_X^2 + (1-\rho)^2 \sigma_Y^2}.
\]
It is therefore well known (see, e.g., \cite[Corollary~II.2.4]{Asmussen-Albrecher10}) that the ruin probability is
\[
p(u) = \exp \PAR{-2 \frac{\rho \rc_X + (1-\rho) \rc_Y}{\rho^2 \sigma_X^2 + (1-\rho)^2 \sigma_Y^2} \, u}.
\]
Consequently, the bound for the ruin probability given in Corollary~\ref{coro:estimate ruin spectrally positive} is exact when $F$ is constant and $X, Y$ are Brownian risk processes. 

However, it should be noted that, when $F$ is not constant, the rates provided in Proposition~\ref{prop:estimation ruin probability for the collective model} and in Corollary~\ref{coro:estimate ruin spectrally positive} may no longer be optimal, as highlighted by our numerical experiments in Section~\ref{sec:application collective model}.\hfill $\Diamond$
\end{remark}

\begin{proof}[Proof of Corollary~\ref{coro:estimate ruin spectrally positive}]
Recall that
\[
e^{-z} \leq 1 - z + \frac{z^2}{2}, \quad \text{for any } z \geq 0.
\]
Then, for spectrally positive Lévy risk processes, by the definition~\eqref{eq:definition of C} of $C_\alpha$, we have, for $z \in [0,1]$ and $\alpha > 0$,
\begin{align*}
C_\alpha(z) 
&\leq -\big(\dE[X(1)]\, z + \dE[Y(1)]\, (1-z)\big) 
+ \frac{\alpha}{2} \PAR{\widetilde\sigma_X^2 z^2 + \widetilde\sigma_Y^2 (1-z)^2},
\end{align*}
where 
$\widetilde\sigma_X^2$ and $\widetilde\sigma_Y^2$ are as defined above.
It is then immediate that 
$
C_\alpha(f_{\min}) \leq 0$ and $C_\alpha(f_{\max}) \leq 0$
for any $\alpha \leq \alpha_0$, with $\alpha_0$ as given in Corollary~\ref{coro:estimate ruin spectrally positive}. 
The conclusion now follows directly from Proposition~\ref{prop:estimation ruin probability for the collective model}.
\end{proof}

\subsection{An identity for the ruin probability} 

In this subsection, we present a theoretical identity characterizing the ruin probability. 

\begin{thm}\label{thm: exact ruin  for the collective model}
Assume:
\begin{enumerate}
\item[{\em (1)}] {the Cram\'er condition \eqref{cram} applies};
\item[{\em (2)}] there exists a deterministic $\ell_\alpha$ such that 
\[
\ell_\alpha = \lim_{t \to \infty} \frac{k_\alpha(t)}{t} \quad \text{almost surely, for all } \alpha \in [0, \overline{\alpha}];
\]
\item[{\em (3)}] $\alpha_*:=\sup\BRA{\alpha>0 :\ell_{\alpha}=0}\in (0, \overline{\alpha})$.
\end{enumerate}

Then, the ruin probability $p(u)$ is positive and satisfies, for any $\alpha \in (\alpha_*, \overline{\alpha}]$,
\begin{equation*}
p(u) = \frac{1}{\dE\big[\re^{-\alpha V(\tau(u)) - k_{\alpha}(\tau(u))} \,\big|\, \tau(u) < \infty\big]}.
\end{equation*}
\end{thm}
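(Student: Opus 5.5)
This is the classical Lundberg-type identity argument, adapted to the random exponent $k_\alpha$. Fix $u>0$ and $\alpha\in(\alpha_*,\overline\alpha]$. By Lemma~\ref{lem: Martingale}, $M_\alpha$ is a martingale with $M_\alpha(0)=1$. Optional stopping at the bounded stopping time $t\wedge\tau(u)$ gives
\[
1=\dE\big[M_\alpha(\tau(u))\ind_{\{\tau(u)\le t\}}\big]+\dE\big[M_\alpha(t)\ind_{\{\tau(u)>t\}}\big].
\]
Letting $t\to\infty$, the first term converges to $\dE[M_\alpha(\tau(u))\ind_{\{\tau(u)<\infty\}}]$ by monotone convergence, since $M_\alpha\ge 0$. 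So the whole proof reduces to showing
\begin{equation}\label{eq:plan-vanish}
\dE\big[\re^{-\alpha V(t)-k_\alpha(t)}\ind_{\{\tau(u)>t\}}\big]\underset{t\to\infty}{\longrightarrow}0 .
\end{equation}
Once \eqref{eq:plan-vanish} holds, we get $1=\dE[M_\alpha(\tau(u))\mid\tau(u)<\infty]\,p(u)$. This forces $p(u)>0$, and dividing gives the stated formula.

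\textbf{Controlling the exponent.} First I would record what $\alpha>\alpha_*$ buys us. Since $\alpha\mapsto k_\alpha(t)$ is convex for each fixed $t$, the limit $\ell_\alpha$ is convex in $\alpha$, and $\ell_0=0$. The definition of $\alpha_*$ then gives $\ell_\alpha\neq0$ for $\alpha>\alpha_*$. Convexity together with $\ell_0=\ell_{\alpha_*}=0$ (the latter obtained by continuity of the convex function $\ell$ on the interior of $[0,\overline\alpha]$) yields $\ell_\alpha>0$ for $\alpha\in(\alpha_*,\overline\alpha]$. Hence, almost surely, $k_\alpha(t)\ge \ell_\alpha t/2$ for all $t$ large, i.e.\ $k_\alpha(t)\to+\infty$ linearly.

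\textbf{Proving \eqref{eq:plan-vanish}.} On $\{\tau(u)>t\}$ we have $V(t)\ge -u$, so the integrand is bounded by $\re^{\alpha u}\re^{-k_\alpha(t)}$. This tends to $0$ almost surely by the previous step. To pass the limit inside the expectation I need domination, and this is the main obstacle: $k_\alpha(t)$ can be negative for moderate $t$, so $\re^{-k_\alpha(t)}$ is not uniformly bounded a priori. I would handle it in two steps.

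1. The derivative $k_\alpha'(t)=\alpha C_\alpha(F(t))$ is bounded in absolute value by $\alpha\max_{z\in[0,1]}|C_\alpha(z)|<\infty$, by continuity of $C_\alpha$ under \eqref{cram}. This bounds $\re^{-k_\alpha(t)}$ on compact time intervals.
2. To obtain uniformity in $t$, I would use a different bound that does not need domination at all. Write $\re^{-\alpha V(t)-k_\alpha(t)}\ind_{\{\tau(u)>t\}}\le \re^{\alpha u}\min\big(\re^{-k_\alpha(t)},\,M_\alpha(t)\re^{\alpha u}\cdots\big)$, or more simply condition on $\cF^F$. By \eqref{eq:conditional-Doob} the conditional expectation of $M_\alpha(t)$ is $1$, which gives a uniformly integrable structure given $F$. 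Conditionally on $\cF^F$, the quantity $k_\alpha$ is deterministic, so the conditional expectation of the integrand is at most $\min\big(1,\re^{\alpha u-k_\alpha(t)}\big)$. This is bounded by $1$ and tends to $0$ almost surely, and dominated convergence then applies to the outer expectation.

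That conditional bound, $\dE[\,\cdot\mid\cF^F]\le\min(1,\re^{\alpha u-k_\alpha(t)})$, is the crux. The first bound, by $1$, comes from $\dE[M_\alpha(t)\mid\cF^F]=1$. The second comes from $V(t)\ge-u$ on $\{\tau(u)>t\}$.
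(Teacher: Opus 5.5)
Your argument is correct and is essentially the paper's: optional stopping at $t\wedge\tau(u)$, monotone convergence for the ruin term, and killing the survival term using $V(t)\geq -u$ on $\{\tau(u)>t\}$ together with $k_\alpha(t)\geq \ell_\alpha t/2$ for large $t$. The differences are only technical. The paper handles random $F$ by running the whole argument conditionally on $\cF^F$ via \eqref{eq:conditional-Doob}, so no domination is needed, and it proves $p(u)>0$ by a separate contradiction. You instead stop unconditionally, dominate the remainder's conditional expectation by $\min\bigl(1,\re^{\alpha u-k_\alpha(t)}\bigr)$, and read $p(u)>0$ directly off the identity; this is slightly cleaner, as is your justification of $\ell_{\alpha_*}=0$ via continuity of $\ell$ at the interior point $\alpha_*$.
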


\begin{proof}
First, note that, for any $t$, we have that $k_0(t)=0$ and $\alpha\mapsto k_\alpha(t)$ is convex. Consequently, $\ell_0=0$ and $\alpha\mapsto \ell_\alpha$ is convex. In addition, we have $\ell_\alpha\leq 0$ when $\alpha\leq \alpha_*$, and $\ell_\alpha> 0$ when $\alpha> \alpha_*$. We deduce that, if $\alpha>\alpha_*$, then $k_\alpha(t)>0$ for $t$ sufficiently large.

\medskip

First, assume that $F$ is deterministic, and therefore $k_\alpha$ is deterministic. Let $\alpha, t > 0$, with $\alpha \leq \overline{\alpha}$ and $\alpha > \alpha_*$. 
Since $M_\alpha$ is a càdlàg $\PAR{\cF_t}_{t \geq 0}$-martingale by Lemma~\ref{lem: Martingale}, Doob's optional stopping theorem yields
\begin{align}\label{eq: Doob thm for Martingales}
1 &= \dE\left[\re^{-\alpha V(t \wedge \tau(u)) - k_\alpha(t \wedge \tau(u))}\right] \nonumber\\
&= \dE\left[\re^{-\alpha V(\tau(u)) - k_\alpha(\tau(u))} \ind_\BRA{\tau(u) \leq t}\right]
+ \dE\left[\re^{-\alpha V(t) - k_\alpha(t)} \ind_\BRA{\tau(u) > t}\right].
\end{align}

By assumption, $\lim_{t \to \infty} k_\alpha(t)/t = \ell_\alpha > 0$, so there exists $T > 0$ such that for all $t > T$ it holds that
$k_\alpha(t) \geq \tfrac{1}{2}{\ell_\alpha}\,t.$
Consequently, for $t > T$,
\[
\dE\Big[\re^{-\alpha V(t) - k_\alpha(t)} \ind_\BRA{\tau(u) > t}\Big] \leq \re^{\alpha u} \re^{-\frac{\ell_\alpha}{2} t},
\]
and therefore,
\[
\lim_{t \to \infty} \dE\Big[\re^{-\alpha V(t) - k_\alpha(t)} \ind_\BRA{\tau(u) > t}\Big] = 0.
\]

On the other hand, by the monotone convergence theorem,
\[
\lim_{t \to \infty} \dE\Big[\re^{-\alpha V(\tau(u)) - k_\alpha(\tau(u))} \ind_\BRA{\tau(u) \leq t}\Big] 
= \dE\Big[\re^{-\alpha V(\tau(u)) - k_\alpha(\tau(u))} \ind_\BRA{\tau(u) < \infty}\Big].
\]

Taking the limit $t \to \infty$ in Equation~\eqref{eq: Doob thm for Martingales}, we obtain
\begin{equation}\label{eq:limit in Doob}
1 = \dE\Big[\re^{-\alpha V(\tau(u)) - k_\alpha(\tau(u))} \ind_\BRA{\tau(u) < \infty}\Big].
\end{equation}
We now show by contradiction that $p(u) > 0$. Suppose $\tau(u) = \infty$ almost surely, so that $V(t) \geq -u$ for all $t \geq 0$. 
Using again the martingale $M_\alpha = \PAR{\re^{-\alpha V(t) - k_\alpha(t)}}_{t \geq 0}$, we have
\[
1 = \dE\left[\re^{-\alpha V(t) - k_\alpha(t)}\right] \leq \re^{\alpha u} \re^{-k_\alpha(t)}.
\]

For $t > T$, $k_\alpha(t) \geq \tfrac{1}{2}{\ell_\alpha}\, t$ with $\ell_\alpha > 0$, and taking $t \to \infty$ leads to the contradiction $1 \leq 0$. Hence, $p(u) > 0$, and the conclusion follows by dividing equality~\eqref{eq:limit in Doob} by $p(u)$.
\medskip

Second, assume that $F$ is a random process. In this case, the time $T$ introduced in the deterministic setting becomes a $\cF^F$-measurable random variable, finite almost surely. Using~\eqref{eq:conditional-Doob} from Lemma~\ref{lem: Martingale} with the bounded stopping time $t \wedge \tau(u)$, we have
\begin{align*}
1 &= \dE\Big[\re^{-\alpha V(\tau(u) \wedge t) - k_\alpha(\tau(u) \wedge t)} \,\big|\, \cF^F\Big] \\
&= \dE\Big[\re^{-\alpha V(\tau(u)) - k_\alpha(\tau(u))} \ind_\BRA{\tau(u) \leq t} \,\big|\, \cF^F\Big]
+ \dE\Big[\re^{-\alpha V(t) - k_\alpha(t)} \ind_\BRA{\tau(u) > t} \,\big|\, \cF^F\Big].
\end{align*}
The event $\BRA{t>T}$ belongs to $\cF^F$. Therefore, the same arguments as in the deterministic case apply on this event. Letting  $t \to \infty$, we deduce
\[
1 = \dE\Big[\re^{-\alpha V(\tau(u)) - k_\alpha(\tau(u))} \ind_\BRA{\tau(u) < +\infty} \,\big|\, \cF^F\Big], \quad \text{almost surely}.
\]
Arguing exactly as in the deterministic case and applying conditional expectations with respect to $\cF^F$, we also prove that $p(u) > 0$. Taking expectations and dividing both sides by $p(u)$ then yields the result.
\end{proof}
\begin{remark} \em
Since $V(\tau(u)) \leq -u$ on the event $\BRA{\tau(u) < \infty}$, under the assumptions of Theorem~\ref{thm: exact ruin  for the collective model} we immediately obtain that for $\alpha > \alpha_*$,
\[
p(u) \leq \frac{\re^{-\alpha u}}{\dP\big(\alpha(\tau(u)) \geq \alpha \,\big|\, \tau(u) < \infty\big)},
\]
where $\alpha(t) = \inf \BRA{\alpha > 0 : k_\alpha(t) > 0}$ denotes the maximal root of the mapping $\alpha \mapsto k_\alpha(t)$.  

Applying the monotone convergence theorem as $\alpha \downarrow \alpha_*$, we then have
\[
p(u) \leq \frac{\re^{-\alpha_* u}}{\dP\big(\alpha(\tau(u)) \geq \alpha_* \,\big|\, \tau(u) < \infty\big)}.
\]

Note that when $F$ is constant, $\alpha(t) = \alpha_*$ does not depend on $t$, and we recover the classical Lundberg inequality:
\begin{equation}\label{eq:lundberg inequality}
p(u) \leq \re^{-\alpha_* u}.
\end{equation}

However, as highlighted in Section~\ref{sec:application collective model}, the Lundberg inequality \eqref{eq:lundberg inequality}, with $\alpha_*$ defined as in Theorem~\ref{thm: exact ruin  for the collective model}, does not, in general, hold for a heterogeneous time-dependent population. This is particularly striking in view of the well-known robustness of exponential bounds derived under Cramér-type conditions. Indeed, in classical risk models, such bounds apply under remarkably broad assumptions and exhibit a high degree of universality. The present setting thus illustrates a genuine limitation of these classical exponential estimates.
\hfill $\Diamond$
\end{remark}

\begin{remark} \em
When $X$ and $Y$ are spectrally negative Lévy processes, and when $F$ is ergodic or convergent, several assumptions of Theorem~\ref{thm: exact ruin  for the collective model} are automatically satisfied.  
Specifically, when $F$ is an ergodic Markov process (or when $F$ converges almost surely to a deterministic value $\rho$), both the limit $\rho$ from Assumption~\ref{A:hyp LLN for V} and the quantity $\ell_\alpha$ from the assumptions of Theorem~\ref{thm: exact ruin  for the collective model} exist. In this case, $\ell_\alpha$ is given by
\begin{align}\label{eq:l alpha}
\ell_\alpha &= 
-\alpha \PAR{\rc_X \rho + \rc_Y (1-\rho)}
+ \frac{\alpha^2}{2} \PAR{\sigma_X^2 \dE[Z^2] + \sigma_Y^2 \dE[(1-Z)^2]} \notag\\
&\quad + \int_{\dR \setminus \BRA{0}} 
\Bigl( \dE\SBRA{ e^{-\alpha x Z} } - 1 + \alpha \, \dE[Z] \, x \, \ind_{\ABS{x}<1} \Bigr) \, \nu_X(\rd x) \notag\\
&\quad + \int_{\dR \setminus \BRA{0}} 
\Bigl( \dE\SBRA{ e^{-\alpha (1-Z) x} } - 1 + \alpha \, (1-\dE[Z]) \, x \, \ind_{\ABS{x}<1} \Bigr) \, \nu_Y(\rd x).
\end{align}
where $Z$ is a random variable with distribution equal to the invariant probability measure of $F$ (or $Z=\rho$ in the deterministic case). 
Moreover, under the net profit condition
\[
\rho\,\dE[X(1)] + (1-\rho)\,\dE[Y(1)] > 0,
\]
Expression~\eqref{eq:l alpha} guarantees the existence of a unique positive root $\alpha_*$ such that $\ell_{\alpha_*} = 0$. Consequently, the result of Theorem~\ref{thm: exact ruin  for the collective model} holds for any $\alpha \in (\alpha_*, \max\BRA{\alpha_*, \overline\alpha}]$.
\hfill $\Diamond$
\end{remark}

We now describe an interesting relationship between the positive roots of $k_\alpha(t)$ and of $\ell_\alpha$.

\begin{proposition}\label{prop:convergence of the roots}
Assume:
\begin{enumerate}
\item[{\em (1)}] {the Cram\'er condition \eqref{cram} applies};
\item[{\em (2)}] for $\alpha\geq 0$, the limit $\ell_\alpha := \lim_{t\to \infty} k_\alpha(t)/t$ exists almost surely;
\item[{\em (3)}] there is a unique $\alpha_*\in(0,\overline{\alpha})$ such that $\ell_{\alpha_*}=0$.
\end{enumerate}
Let $\alpha(t)=\sup\BRA{\alpha\geq 0: k_\alpha(t)=0}$ denote the largest root of $\alpha\mapsto k_\alpha(t)$. Then
\[
\lim_{t\to \infty} \alpha(t) = \alpha_* \quad \text{almost surely.}\]
In addition, we have $\alpha_0\leq \alpha_*$, where $\alpha_0$ is the rough decay rate introduced in Proposition~\ref{prop:estimation ruin probability for the collective model}.
\end{proposition}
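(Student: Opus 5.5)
The plan is to use convexity in $\alpha$ of both $\alpha\mapsto k_\alpha(t)$ and $\alpha\mapsto\ell_\alpha$, together with $k_0(t)=0$ and $\ell_0=0$. Since $\ell_\alpha$ is the almost sure limit of $k_\alpha(t)/t$ for each fixed $\alpha$, I first fix a countable dense set $D\subset[0,\overline\alpha]$. I then work on a single full-probability event on which $k_\alpha(t)/t\to\ell_\alpha$ for all $\alpha\in D$ simultaneously. On this event the limit is a convex function of $\alpha$ on $D$, vanishing at $0$.

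The uniqueness of the root $\alpha_*\in(0,\overline\alpha)$, together with convexity and $\ell_0=0$, forces the following sign pattern: $\ell_\alpha<0$ for $\alpha\in(0,\alpha_*)$ and $\ell_\alpha>0$ for $\alpha>\alpha_*$. Indeed, if $\ell$ vanished on a subinterval, uniqueness would fail. If instead $\ell\ge0$ near $0$, convexity combined with the root at $\alpha_*$ would force $\ell\equiv 0$ on $[0,\alpha_*]$, again contradicting uniqueness.

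Fix $\varepsilon>0$ and pick $a,b\in D$ with $\alpha_*-\varepsilon<a<\alpha_*<b<\alpha_*+\varepsilon$, which is possible since $\alpha_*<\overline\alpha$. For $t$ large (random, but finite on the good event) we then have $k_a(t)<0$ and $k_b(t)>0$.

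Next, $\alpha\mapsto k_\alpha(t)$ is convex, vanishes at $0$, and is negative at $a$. It therefore has a root in $(a,b)$. Moreover, convexity gives $k_\alpha(t)>0$ for every $\alpha\ge b$: writing $b$ as a convex combination of $a$ and $\alpha$, the inequality $k_b(t)\le \lambda k_a(t)+(1-\lambda)k_\alpha(t)$ with $k_a(t)<0<k_b(t)$ yields $k_\alpha(t)>0$. The same argument applied to $0$ and $a$ shows that $k_\alpha(t)<0$ for all $\alpha\in(0,a]$. Hence the largest root $\alpha(t)$ lies in $(a,b)\subset(\alpha_*-\varepsilon,\alpha_*+\varepsilon)$ for all large $t$. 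Letting $\varepsilon\downarrow0$ along a sequence gives $\alpha(t)\to\alpha_*$ almost surely.

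One subtlety is that $k_\alpha(t)$ may be $+\infty$ beyond $\overline\alpha$. This does not affect the argument, since the sup in the definition of $\alpha(t)$ only involves roots, and $k_\alpha(t)>0$ or $+\infty$ beyond $b$. I regard this bookkeeping, namely making the "for all $\alpha$" statement hold on a single null-set complement via $D$ and convexity, as the main, though mild, obstacle.

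For $\alpha_0\le\alpha_*$, I recall from the proof of Proposition~\ref{prop:estimation ruin probability for the collective model} that $k_\alpha'(t)=\alpha C_\alpha(F(t))\le\alpha\max(C_\alpha(f_{\min}),C_\alpha(f_{\max}))$. By definition of $\widetilde\alpha$, this maximum is $<0$ for every $\alpha<\alpha_0$; here I use that the set $\{\alpha: C_\alpha(z)<0\}$ is an interval starting at $0$, because $\alpha C_\alpha(z)$ is convex in $\alpha$ and vanishes at $0$. Hence $k_\alpha(t)\le -c_\alpha t$ with $c_\alpha>0$, and so $\ell_\alpha\le -c_\alpha<0$ for all $\alpha\in(0,\alpha_0)$. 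Since $\ell_\alpha>0$ for $\alpha>\alpha_*$, no $\alpha\in(0,\alpha_0)$ can exceed $\alpha_*$. Thus $\alpha_0\le\alpha_*$ (trivially so if $\alpha_0=\overline\alpha$ is capped, since $\alpha_*<\overline\alpha$ would then contradict $\ell<0$ on $(0,\alpha_0)$).
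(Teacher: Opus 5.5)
Your proof is correct, but it is organized differently from the paper's.

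\textbf{Convergence of $\alpha(t)$.} The paper argues by compactness. It notes that $\alpha(t)\le\alpha_*+\varepsilon$ eventually and extracts a random subsequence $\alpha(t_n)\to\widetilde\alpha_*$. It then passes to the limit in $k_{\alpha(t_n)}(t_n)/t_n=0$, invoking continuity of $(\alpha,t)\mapsto k_\alpha(t)/t$, to get $\ell_{\widetilde\alpha_*}=0$. Finally it excludes $\widetilde\alpha_*=0$ by a separate contradiction: that case would force $\ell\ge 0$ beyond $\alpha_*/2$.

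You instead sandwich the largest root directly. You first derive the strict sign pattern of $\ell$ from convexity and uniqueness of the root. You then fix deterministic $a<\alpha_*<b$ with $\ell_a<0<\ell_b$. Convexity of $\alpha\mapsto k_\alpha(t)$ for each fixed $t$ traps $\alpha(t)$ in $(a,b)$ as soon as $k_a(t)<0<k_b(t)$.

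What your route buys is rigor exactly where the paper is loose:
\begin{itemize}
\item You only need almost sure convergence of $k_\alpha(t)/t$ at countably many deterministic $\alpha$.
\item The paper evaluates the limit at the random point $\widetilde\alpha_*$ along the moving argument $\alpha(t_n)$. That step implicitly needs locally uniform convergence in $\alpha$; this is true for convex functions converging on a dense set, but the paper never says so.
\item Your lower bound $\alpha(t)>a$ also replaces the paper's separate argument ruling out $\widetilde\alpha_*=0$.
\end{itemize}
The paper's version is shorter and follows the generic ``every limit point is a root'' template.

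One wording point. The root in $(a,b)$ comes from $k_a(t)<0<k_b(t)$ and continuity on $[a,b]$, so take $b<\overline\alpha$ to stay in the interior of the finiteness domain. It does not come from vanishing at $0$; that fact only gives negativity on $(0,a]$, which you do not actually need.

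\textbf{The bound $\alpha_0\le\alpha_*$.} The paper uses $\alpha(t)\ge\alpha_0$ together with the convergence just proved. You bound $\ell_\alpha\le -c_\alpha<0$ on $(0,\alpha_0)$ directly, which is independent of the first part. You also make explicit that $\{\alpha>0: C_\alpha(z)<0\}$ is an interval starting at $0$, since $\alpha\mapsto\alpha C_\alpha(z)$ is convex and vanishes at $0$. The paper uses this tacitly when it asserts $k_\alpha(t)\le 0$ for all $\alpha\le\alpha_0$.
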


\begin{proof}  
Both $\alpha\mapsto k_\alpha(t)$ and $\alpha\mapsto \ell_\alpha$ are convex. Since $\ell_\alpha>0$ for $\alpha>\alpha_*$, it follows that $k_\alpha(t)>0$ for large $t$ when $\alpha>\alpha_*$.  
Let $\varepsilon>0$. For sufficiently large $t$, $\alpha(t)\in [0,\alpha_*+\varepsilon]$, so there exists a random subsequence $(t_n)_{n\ge 1}$ with $t_n\to\infty$ and a random variable $\widetilde\alpha_*\in [0,\alpha_*+\varepsilon]$ such that $\alpha(t_n)\to \widetilde\alpha_*$ almost surely. 

We now show that $\widetilde\alpha_*=\alpha_*$.  
By definition, $k_{\alpha(t_n)}(t_n)/t_n=0$ a.s.\ for all $n$. Taking $n\to\infty$ and using the continuity of $(\alpha,t)\mapsto k_\alpha(t)/t$, we obtain $\ell_{\widetilde\alpha_*}=0$. By uniqueness of the positive root, $\widetilde\alpha_*\in\{0,\alpha_*\}$.  

Assume, for contradiction, that $\widetilde\alpha_*=0$ on a set of positive probability. Then for large $n$, $\alpha(t_n)\leq \alpha_*/2$, which implies $k_\alpha(t_n)>0$ for all $\alpha>\alpha_*/2$. Taking the limit as $n\to\infty$ gives $\ell_{\alpha_*/2}\ge 0$, contradicting the uniqueness of $\alpha_*$. Hence $\widetilde\alpha_*=\alpha_*$ almost surely.  
Since every convergent subsequence tends to the same limit, we conclude that $\alpha(t)\to\alpha_*$ almost surely, as $t\to\infty$.

Finally, we prove that  $\alpha_0\leq \alpha_*$ by contradiction. The value $\alpha_0$ was chosen in the proof of Proposition~\ref{prop:estimation ruin probability for the collective model} so that for all $\alpha\leq \alpha_0$ and $t\geq 0$, $k_\alpha(t)\leq 0$, which implies that $\alpha(t)\geq \alpha_0$. If $\alpha_*<\alpha_0$, the contradiction follows from the convergence of $\alpha(t)$ to $\alpha_*$.
\end{proof}

We conclude this section with the specific case of Brownian risk processes.
We consider the case that $X$ and $Y$ are two independent arithmetic Brownian motions with respective drift $\rc_X,\rc_Y$ and diffusion coefficients $\sigma_X,\sigma_Y>0$.
We have
\[
       k_\alpha(t)=-\alpha\PAR{\rc_X\int_0^tF(s)\rd s+\rc_Y\int_0^t(1-F(s))\rd s}
       +\frac{\alpha^2}{2}\PAR{\sigma_X^2\int_0^tF^2(s)\rd s+\sigma_Y^2\int_0^t(1-F(s))^2\rd s}.
\]

\begin{proposition}\label{prop: exact ruin brownian motion for collective model}
Let $X$ and $Y$ be two independent arithmetic Brownian motions. We assume the existence of $\rho$ and $\eta$ defined in Assumptions~\ref{A:hyp LLN for V} and \ref{A:hyp CLT for V}. Set $\alpha_*:=2\frac{\rho \rc_X+(1-\rho)\rc_Y}{\sigma_X^2\eta^2+\sigma_Y^2\left(\eta^2+1-2\rho\right)}$. 

Under the net profit condition $\rho\rc_X+(1-\rho)\rc_Y>0$, for any $\alpha>\alpha_*$,
\begin{equation}
p(u)=\frac{\re^{-\alpha u}}{\dE\left[\re^{-k_{\alpha}(\tau(u))}\,\big|\,\tau(u)<\infty\right]}.
\end{equation}
Moreover, when $F$ is deterministic, the decay rate of $p(u)$ is $\alpha_*$:
\[
\lim_{u\to \infty}\frac{1}{u}\log p(u)= -\alpha_*.
\]
\end{proposition}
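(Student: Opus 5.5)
The plan has two parts: apply Theorem~\ref{thm: exact ruin  for the collective model} to get the identity, then extract the decay rate by a Lundberg-type sandwich when $F$ is deterministic.

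\textbf{Step 1: the identity.} For arithmetic Brownian motions the Cramér condition \eqref{cram} holds for every $\overline\alpha>0$, and Lemma~\ref{lem: Martingale} applies for all $\alpha\geq0$. Dividing the explicit formula for $k_\alpha(t)$ by $t$ and using Assumptions~\ref{A:hyp LLN for V} and~\ref{A:hyp CLT for V} gives, almost surely,
\[
\ell_\alpha=-\alpha\PAR{\rho\rc_X+(1-\rho)\rc_Y}+\frac{\alpha^2}{2}\PAR{\sigma_X^2\eta^2+\sigma_Y^2(1-2\rho+\eta^2)}.
\]
Here we use $\frac1t\int_0^t(1-F)^2\,\rd s\to 1-2\rho+\eta^2$. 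This limit is deterministic. Its denominator is positive, because $\sigma_X,\sigma_Y>0$ and $\eta^2>0$. Under the net profit condition, the largest root is therefore exactly the stated $\alpha_*>0$. Choosing $\overline\alpha>\alpha$ arbitrarily large, the hypotheses of Theorem~\ref{thm: exact ruin  for the collective model} hold for any given $\alpha>\alpha_*$. Brownian paths are continuous, so ruin happens by creeping and $V(\tau(u))=-u$ exactly on $\{\tau(u)<\infty\}$. Hence $\re^{-\alpha V(\tau(u))}=\re^{\alpha u}$, which gives the displayed formula.

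\textbf{Step 2: upper bound on $p(u)$ for deterministic $F$.} Now $k_\alpha$ is deterministic. Fix $\alpha>\alpha_*$. Since $k_\alpha(t)/t\to\ell_\alpha>0$ and $k_\alpha$ is continuous, $m_\alpha:=\inf_{t\geq0}k_\alpha(t)$ is finite, with $m_\alpha\leq0$ because $k_\alpha(0)=0$. The identity then yields $p(u)\leq \re^{-\alpha u}\re^{-m_\alpha}$, so $\limsup_u \frac1u\log p(u)\leq-\alpha$. Letting $\alpha\downarrow\alpha_*$ gives $\limsup\leq-\alpha_*$.

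\textbf{Step 3: lower bound on $p(u)$ (main obstacle).} We must show that $\dE[\re^{-k_\alpha(\tau(u))}\mid\tau(u)<\infty]$ does not grow faster than $\re^{(\alpha-\alpha_*)u+o(u)}$. Since $k_\alpha(t)\geq(\ell_\alpha-\varepsilon)t$ for large $t$, this expectation is dominated by the probability of ruin at small times, so I would avoid bounding it directly. Instead I would use a time-localised estimate. For $\alpha<\alpha_*$ (so $\ell_\alpha<0$) and any $c>0$, apply optional stopping at $\tau(u)\wedge cu$. On $\{\tau(u)\leq cu\}$ the martingale equals $\re^{\alpha u-k_\alpha(\tau(u))}$. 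On the complement, $\tau(u)>cu$, and $k_\alpha(cu)\approx\ell_\alpha cu\to-\infty$; there I would bound $\dE[M_\alpha(cu);\tau(u)>cu]$ using Gaussian tail estimates for $V(cu)$, whose mean and variance are both linear in $u$. Alternatively, a cleaner route is a direct Gaussian lower bound: $p(u)\geq\dP(V(tu)<-u)$ for well-chosen $t$, and $V(tu)$ is Gaussian with mean $\approx -\mu tu$ replaced by $\mu tu$, where $\mu=\rho\rc_X+(1-\rho)\rc_Y$, and variance $\approx s^2tu$, where $s^2=\sigma_X^2\eta^2+\sigma_Y^2(1-2\rho+\eta^2)$ (by the ergodic averages). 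Then
\[
\log\dP(V(tu)<-u)\sim-\frac{(1+\mu t)^2u}{2s^2t},
\]
which is minimised at $t=1/\mu$, giving the rate $2\mu/s^2=\alpha_*$. So $\liminf_u\frac1u\log p(u)\geq-\alpha_*$. The delicate point is justifying that the mean $\int_0^{tu}(\rc_XF+\rc_Y(1-F))\,\rd s$ and the variance are $(\mu tu)(1+o(1))$ and $(s^2tu)(1+o(1))$ uniformly enough. This holds because $F$ is deterministic and the Cesàro limits hold. Combining Steps 2 and 3 gives the decay rate $\alpha_*$.
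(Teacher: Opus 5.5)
Your Step 1 is correct and matches the paper. The gap is in Step 2, where the inequality runs the wrong way. From $k_\alpha(\tau(u))\geq m_\alpha$ you get $\re^{-k_\alpha(\tau(u))}\leq \re^{-m_\alpha}$, hence $\dE[\re^{-k_\alpha(\tau(u))}\mid\tau(u)<\infty]\leq \re^{-m_\alpha}$. The identity then gives
\[
p(u)\geq \re^{m_\alpha}\,\re^{-\alpha u},\qquad \alpha>\alpha_*,
\]
which is a \emph{lower} bound. The bound you wrote, $p(u)\leq \re^{-m_\alpha}\re^{-\alpha u}$, is false. For constant $F$ one has $k_\alpha(t)=\ell_\alpha t$ and $m_\alpha=0$, while $p(u)=\re^{-\alpha_* u}>\re^{-\alpha u}$.

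The identity with $\alpha>\alpha_*$ cannot yield an upper bound at all. Since $k_\alpha(t)\to+\infty$, the denominator is not bounded below; for constant $F$ it equals $\re^{-(\alpha-\alpha_*)u}$. As it stands, you prove $\liminf_{u}\frac1u\log p(u)\geq-\alpha_*$ twice: once by Step 2 read correctly, and once by the Gaussian estimate $p(u)\geq\dP(V(u/\mu)<-u)$ in Step 3, which is valid for fixed $t$ with no uniformity needed. You never prove the half $\limsup_u\frac1u\log p(u)\leq-\alpha_*$. Your diagnosis in Step 3 is also inverted: the bound on the denominator you single out as delicate is the trivial one.

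The missing upper bound comes from taking $0<\alpha<\alpha_*$ instead. Then $\ell_\alpha<0$. Because $F$ is deterministic, $k_\alpha$ is a deterministic continuous function with $k_\alpha(0)=0$ and $k_\alpha(t)\to-\infty$, so $K_\alpha:=\sup_{t\geq0}k_\alpha(t)<\infty$. Apply optional stopping at $t\wedge\tau(u)$ and monotone convergence, exactly as in the proof of Proposition~\ref{prop:estimation ruin probability for the collective model}, and use $V(\tau(u))=-u$:
\[
1\geq \dE\big[M_\alpha(\tau(u))\ind_{\{\tau(u)<\infty\}}\big]=\re^{\alpha u}\,\dE\big[\re^{-k_\alpha(\tau(u))}\ind_{\{\tau(u)<\infty\}}\big]\geq \re^{\alpha u-K_\alpha}\,p(u).
\]
So $p(u)\leq \re^{K_\alpha}\re^{-\alpha u}$, and letting $\alpha\uparrow\alpha_*$ gives the $\limsup$ bound.

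With this sandwich (upper bound from $\alpha<\alpha_*$, lower bound from the identity with $\alpha>\alpha_*$), your route is a complete and elementary alternative to the paper's, and Step 3 becomes unnecessary. The paper instead invokes Corollary 2.3 of Duffield--O'Connell, which gives both bounds at once. To do so it verifies their large-deviation hypotheses, computes the Legendre transform $\ell^*$, and controls $(-V)^*_n+V_n$ through Gaussian tail bounds.
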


\begin{proof} 
 Let us note that $X(1)$ and $Y(1)$ possess finite Laplace transforms. The limit of the sequence $k_\alpha(t)/t$ is
\[
\ell_\alpha = -\alpha\big(\rho \rc_X + (1-\rho)\rc_Y\big) + \frac{\alpha^2}{2}\big(\sigma_X^2 \eta^2 + \sigma_Y^2 (\eta^2 + 1 - 2\rho)\big).
\]
Under the net profit condition, it follows immediately that $\alpha_*$ has the claimed form.  

Moreover, since the process $V$ is continuous, we have $V(\tau(u))=-u$. The first result then follows directly from Theorem~\ref{thm: exact ruin  for the collective model}.

We now deduce the decay rate of $p(u)$ in the case of deterministic $F$, following \cite{duffield-oconnell95}. Since $F$ is deterministic, we have
\[
\ell_\alpha = \lim_{t\to\infty} k_\alpha(t)/t = \lim_{t\to\infty} \frac{1}{t} \log \dE\SBRA{\exp(-\alpha V_t)}.
\]
It is straightforward to verify that the process $(-V_t)_{t\geq 0}$ satisfies \cite[Assumption 2.1]{duffield-oconnell95} with $v_t = a_t = t$ and $g(u) = 1/u$.  
For $u\geq 0$, define
\[
\ell^*(u) := \sup_{\alpha\in\dR} \BRA{\alpha u - \ell_\alpha} = \frac{(u + \rc_X \rho + \rc_Y(1-\rho))^2}{2\PAR{\sigma_X^2 \eta^2 + \sigma_Y^2(\eta^2 - 2\rho + 1)}}.
\]
This function is continuous, and we have 
\[
\inf_{u>0} \frac{\ell^*(u^+)}{u} = \inf_{u>0} \frac{\ell^*(u)}{u} = \alpha_*.
\]

Denote $(-V)^*_n = \sup_{0\leq r < 1} (-V_{n+r})$. Then
\[
(-V)^*_n + V_n = \sup_{0\leq r<1} (-(V_{n+r} - V_n)) \leq |\rc_X| + |\rc_Y| + \sup_{0\leq r<1} Z_{n,r},
\]
where 
\[
Z_{n,r} = -\sigma_X \int_n^{n+r} F(s) \,\rd B^X_s - \sigma_Y \int_n^{n+r} (1-F(s)) \,\,\rd B^Y_s
\]
is a continuous centered Gaussian process with variance
\[
\sigma_X^2 \int_n^{n+r} F^2(s)\rd s + \sigma_Y^2 \int_n^{n+r} (1-F(s))^2 \,\rd s \leq \sigma^2, 
\quad \sigma^2 := \sigma_X^2 + \sigma_Y^2.
\]

Since $(\exp(\lambda Z_{n,r}))_{0\leq r\leq 1}$ is a submartingale for any $\lambda>0$, Doob's inequality gives
\[
\dP\PAR{\sup_{0\leq r\leq 1} Z_{n,r} > x} \leq \dE[\exp(\lambda Z_{n,1})] e^{-\lambda x} \leq \exp(\lambda^2 \sigma^2 / 2 - \lambda x).
\]
Optimizing over $\lambda$ yields
\[
\dP\PAR{\sup_{0\leq r\leq 1} Z_{n,r} > x} \leq \exp\PAR{-\frac{x^2}{2\sigma^2}}.
\]

Using $\dE[\re^{\alpha Z}] = 1 + \int_0^\infty \alpha e^{\alpha x} \dP(Z>x) \rd x$ for any nonnegative random variable $Z$, we obtain
\[
\dE\left[\exp\left(\alpha \sup_{0\leq r\leq 1} Z_{n,r}\right)\right] \leq 1 + \alpha \sqrt{2\pi} \sigma \,\exp(\tfrac{1}{2}\alpha^2 \sigma^2 ).
\]
Consequently, for any $\alpha>0$,
\[
\lim_{n\to \infty} \frac{1}{n} \log \dE[\exp(\alpha((-V)^*_n + V_n))] = 0.
\]

The decay rate of $p(u)$ then follows from \cite[Corollary 2.3]{duffield-oconnell95}.
\end{proof}

\begin{remark} \em
The ruin probability in our setting is closely linked to the fluctuations of a Lévy process relative to a curve. In particular, when $F$ is deterministic and the risk processes are arithmetic Brownian motions, the ruin probability can be expressed as the probability that a Brownian motion crosses a deterministic curve:
\[
p(u) = \dP\Big(\exists t \ge 0 : B_t < -u - b \circ a^{-1}(t)\Big),
\]
where $B$ is a standard Brownian motion, 
\[
a(t) = \sigma_X^2 \int_0^t F^2(s)\, \rd s + \sigma_Y^2 \int_0^t (1-F(s))^2\, \rd s, \qquad
b(t) = \rc_X \int_0^t F(s)\, \rd s + \rc_Y \int_0^t (1-F(s))\, \rd s.
\]
However, even in the case of Brownian motion, explicit formulas for general curves are not available. Relevant studies include \cite{AliliPatie10,AliliPatie14}, as well as the recent work \cite{Chaumont-Pellas23}, which provides an explicit expression for the probability that a Brownian motion crosses above a deterministic curve via its supremum, under a monotonicity assumption on the curve. We also mention \cite{HerrmannTanre16}, which proposes a numerical approach to the first-passage time of Brownian motion to a curved boundary.
\hfill $\Diamond$
\end{remark}

\section{Application: models under a SIS-dynamic}\label{sec:SIS}
The SIS ({\it Susceptible-Infected-Susceptible}) model describes the spread of an infectious disease in which individuals can become infected, recover, and subsequently return to the susceptible state without acquiring immunity. In such a population, susceptible individuals become infected through contact with infected individuals, while infected individuals recover at a given rate and immediately become susceptible again. In this section, we study a SIS-driven insurance risk model, for both the collective model (in which the SIS dynamic is deterministic) and the individual framework (in which the SIS dynamic is stochastic).

\medskip 

\subsection{Collective model}\label{sec:application collective model}

We now study the ruin probability for the collective model under various risk processes in a SIS epidemiological environment. Recall that, in the classical deterministic SIS epidemic model, the evolution of the fraction $F$ of infected individuals is governed by the ordinary differential equation (ODE)
\[
F'(t)=\beta (1-F(t))F(t)-\gamma F(t),
\]
where $\beta,\gamma>0$ are model parameters. Let $F_0$ denote the initial fraction of infected individuals. If $\beta\not=\gamma$, then the solution to the ODE is given by
\begin{equation}\label{eq:value of F in BM and SIS case}
F(t)=\dfrac{\beta -\gamma}{\PAR{\frac{\beta-\gamma}{F_0}-\beta}\re^{-(\beta-\gamma)t}+\beta} ,
\end{equation}
while if $\beta=\gamma$ then
\[
F(t)=\frac{F_0}{1+\beta F_0 t} .
\]

For future use, we can verify that if $\beta\neq\gamma$ then
\begin{align}
\int_0^tF(s)\rd s&= \frac{1}{\beta} \ln \PAR{1 + \frac{\beta F_0}{\beta-\gamma} \PAR{\re^{(\beta-\gamma)t}-1}} ,
\label{eq:expression int(F)} \\
\int_0^tF^2(s)\rd s&= \frac{\beta-\gamma}{\beta^2} \ln \PAR{1 + \frac{\beta F_0}{\beta-\gamma} \PAR{\re^{(\beta-\gamma)t}-1}} - \frac{F(t)-F_0}{\beta} , \label{eq:expression int(F2)}
\end{align}
and, if $\beta=\gamma$ then
\[
\int_0^tF(s)\rd s=\frac{1}{\beta}\ln\PAR{1+\beta F_0t} \quad \text{and} \quad \int_0^tF^2(s)\rd s=\frac{F_0^2 t}{1+\beta F_0 t}.
\]

The basic reproduction number, defined as the average number of infections generated by a single infected individual in a fully susceptible population, is given in this SIS model by $R_0=\frac{\beta}{\gamma}$. The following well-known asymptotic behavior holds:
\begin{itemize}
\item[$\circ$] If $R_0\leq1,\,\displaystyle\lim_{t\to\infty}F(t)=0.$
\item[$\circ$] If $R_0>1$ and $F_0>0,\,\displaystyle\lim_{t\to\infty}F(t)=1-\frac{1}{R_0}.$ 
\end{itemize}
It follows that $\rho$, as defined in \eqref{eq:def-rho}, is positive only when $\beta>\gamma$, i.e., when $R_0>1$. In this case,
\[
\rho=1-\frac{\gamma}{\beta}=1-\frac{1}{R_0}.
\]
Moreover, the parameter $\eta$ introduced in Assumption~\ref{A:hyp CLT for V} satisfies $\eta=\rho$.

In this setting, the process $X$ denotes the risk process associated with infected individuals and the process $Y$ denotes the risk process of susceptible individuals. We will study the probability of ruin for the case that $X$ and $Y$ are diffusive risk processes, and the case that they are both compound Poisson risk processes.

\medskip 
\paragraph{\it Diffusive risk processes}

We study the ruin probability for the collective model with arithmetic Brownian risk processes. 
From Equation~\eqref{eq:k alpha} with $\nu_X\equiv0\equiv\nu_Y$, the expression of $k_\alpha$ for the Brownian model can be easily deduced from~\eqref{eq:expression int(F)} and~\eqref{eq:expression int(F2)}.

The net profit condition in this setting is $\rho\rc_X+(1-\rho)\rc_Y>0$ and the decay rate $\alpha_*$ introduced in Proposition~\ref{prop: exact ruin brownian motion for collective model} is
\[
\alpha_*=2\frac{\rho\rc_X+(1-\rho)\rc_Y}{\rho^2\sigma_X^2+(1-\rho)^2\sigma_Y^2} .
\]

As $F$ is a monotone function, see Expression~\eqref{eq:value of F in BM and SIS case}, it takes its values exclusively either in the interval $[F_0,\rho)$ or in the interval $(\rho,F_0]$ (depending on the sign of the quantity $\frac{\beta-\gamma}{F_0}-\beta$). By Corollary~\ref{coro:estimate ruin spectrally positive}, the ruin probability satisfies
\[
p(u)\leq \exp\PAR{-\alpha_0 u}, \quad \text{with }\alpha_0:=2\min\PAR{\frac{F_0\rc_X+(1-F_0)\rc_Y }{F_0^2\sigma_X^2+(1-F_0)^2\sigma_Y^2},\frac{\rho\rc_X+(1-\rho)\rc_Y}{\rho^2\sigma_X^2+(1-\rho)^2\sigma_Y^2}}.
\]

For non class-dependent premiums, i.e.\ $\rc_X=\rc_Y$, the equality $\alpha_0=\alpha_*$ holds if either of the following conditions is satisfied:
 \begin{itemize}
 \item[$\circ$] $\rho \geq F_0$ and $F_0 + \rho \geq 2\sigma_Y^2 / (\sigma_X^2+\sigma_Y^2)$,
 \item[$\circ$] $\rho < F_0$ and $F_0 + \rho \leq 2\sigma_Y^2 / (\sigma_X^2+\sigma_Y^2)$.
 \end{itemize}
 Otherwise, we have $\alpha_0<\alpha_*$.

\medskip
We compare in Figure~\ref{fig:Ruin proba for the SIS Brownian model} the decay rates $\alpha_0$ and $\alpha_*$ with an empirical estimate of the ruin probability for this SIS-Brownian model. To compute the estimate, we used a Monte-Carlo method (with $5000$ iterations) and an Euler scheme (with step size $0.1$) with the following parameters: $\beta=2$, $\gamma=1$, $\rc_X=\rc_Y=0.1$, $\sigma_X=1$, $\sigma_Y=2$ and different values of $F_0$. For this set of parameters, we have $\alpha_*=0.16$. Note that when $F_0=\rho$, the function $F$ is constant.

\begin{figure}[h!]
    \centering
    \begin{tabular}{ccc}
    \includegraphics[scale=0.3]{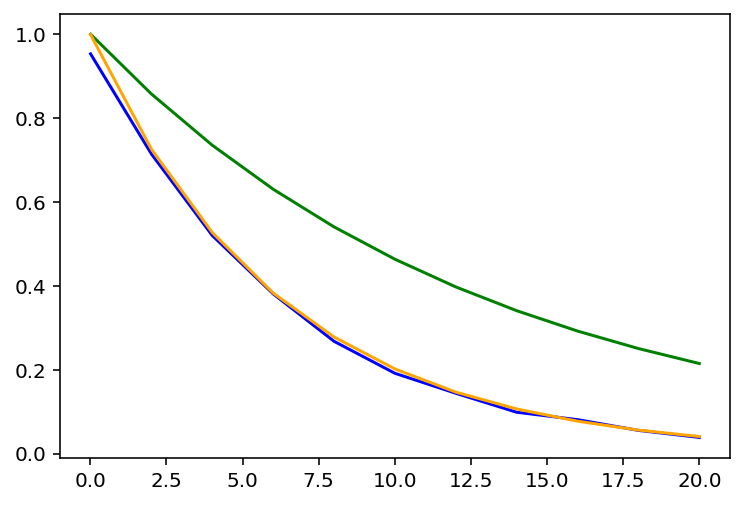}& \includegraphics[scale=0.3]{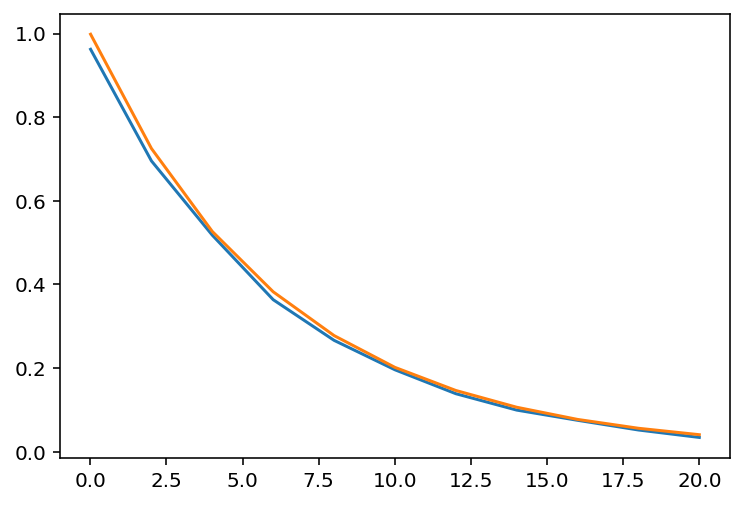}&
    \includegraphics[scale=0.3]{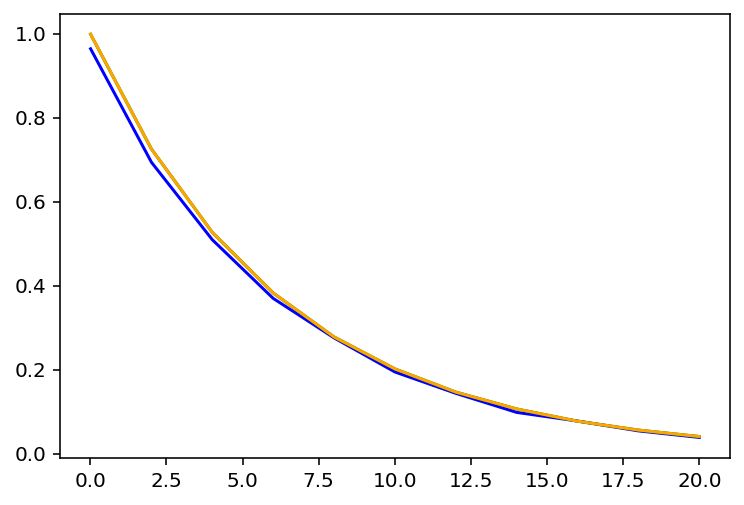}\\
    when $F_0=0.2<\rho$&
    when $F_0=0.5=\rho$ &
    when $F_0=0.7>\rho$ \\
     ($\alpha_*>0.077\simeq\alpha_0$)&($\alpha_*=\alpha_0$)&
     ($\alpha_*=\alpha_0$)
    \end{tabular}
    \caption{Comparison of an estimate of the ruin probability before time $T=1000$ (in blue) with the curves $u\mapsto\re^{-\alpha_0 u}$ (in green) and $u\mapsto\re^{-\alpha_* u}$ (in orange) for the SIS-Brownian model with an initial surplus $u\in[0.1,20]$ (on the horizontal axis). 
    }
    \label{fig:Ruin proba for the SIS Brownian model}
\end{figure}

\medskip
By Proposition~\ref{prop: exact ruin brownian motion for collective model}, $\alpha_*$ is the decay rate at infinity of the ruin probability. At first glance, based on the identity satisfied by the ruin probability in Proposition~\ref{prop: exact ruin brownian motion for collective model} and on the shape of Figure~\ref{fig:Ruin proba for the SIS Brownian model}, one might be led to believe that a Lundberg inequality of the form $p(u)\leq \re^{-\alpha_* u}$ holds here. However, this is not always the case for the Brownian risk model in a heterogeneous time-dependent  population. 
 In fact, with the same set of parameters as in Figure~\ref{fig:Ruin proba for the SIS Brownian model}, we observe in Figure~\ref{fig:Root theta(t)} the convergence of  the positive root $\alpha(t)$ of $\alpha\mapsto k_{\alpha}(t)$ to $\alpha_*$ when $t\to\infty$ (see Proposition~\ref{prop:convergence of the roots}). Note that in this specific model, \[\alpha(t)=2\frac{\rc_X\int_0^t F(s)\,\rd s+\rc_Y\int_0^t (1-F(s))\,\rd s}{\sigma_X^2\int_0^t F^2(s)\,\rd s+\sigma_Y^2\int_0^t (1-F(s))^2\,\rd s}\] has an explicit expression, thanks to  \eqref{eq:expression int(F)} and \eqref{eq:expression int(F2)}. In particular, when $F_0=0.2$, we have $\alpha(t)< \alpha_*$, which means that $\forall t> 0$, $k_{\alpha_*}(t)>0$ and then $p(u)> \exp\PAR{-\alpha_* u}$ by Proposition~\ref{prop: exact ruin brownian motion for collective model}. The opposite behavior occurs when $F_0=0.7$.
We deduce from this example that Lundberg's inequality does not hold for certain values of the deterministic SIS model parameters, and thus for the collective model~\eqref{eq:def-V} in general. We conjecture that for a range of parameters (the same implying $\alpha_0=\alpha_*$), the Lundberg inequality $p(u)\leq \re^{-\alpha_* u}$ holds in the Brownian-SIS model, and that for the remaining parameter values, there is a function $g$ with $g(0)=1$ and $\lim_{u\to \infty}\frac{\log g(u)}{u}=0$ such that for all $u> 0$,
\[
\re^{-\alpha_* u}<p(u)\leq g(u)\re^{-\alpha_* u}.
\]

\begin{figure}[h!]
    \centering
    \includegraphics[scale=0.3]{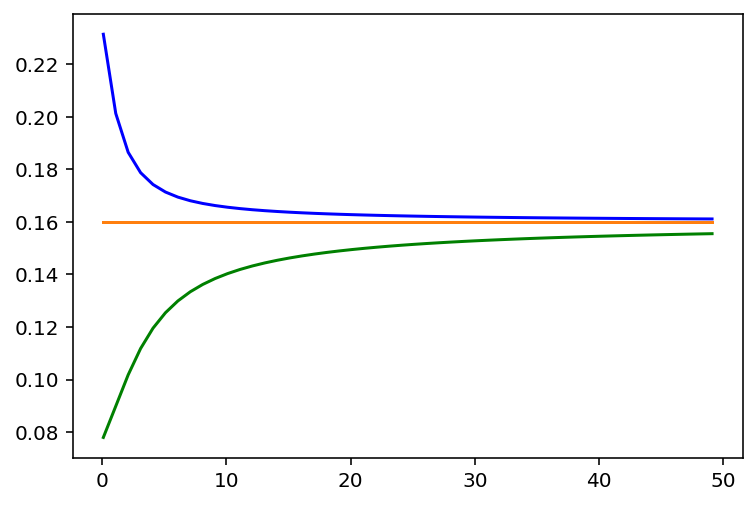}
    \caption{The functions $t\mapsto\alpha(t)$ when $F_0=0.2$ (in green) and when $F_0=0.7$ (in blue), and the value $\alpha_*$ in orange.
    }
    \label{fig:Root theta(t)}
\end{figure}

\bigskip

\paragraph{\it Compound Poisson risk processes}

Assume now  that $X$ and $Y$ are independent compound Poisson processes with drift $\rc_X$ and $\rc_Y$, Poisson rate $\lambda_X$ and $\lambda_Y$ and (downward) exponential jumps with rate $\delta_X$ and $\delta_Y$, respectively. 
The process $V$, defined by \eqref{eq:def-V}, can be written in this case in the following way
\begin{align}\label{eq:V in the compound poisson case}
V(t)=\rc_X\int_0^tF(s)\rd s+\rc_Y\int_0^t(1-F(s))\rd s-\sum_{i=1}^{N_t^X}F(T_i^{X})A_i-\sum_{i=1}^{N_t^Y}\PAR{1-F(T_i^{Y})}B_i,
\end{align}
where $N^X$ is the Poisson counting process associated with the rate $\lambda^X$, $(T^X_i)_{i\geq 1}$ are the successive jump times of $N^X$, and $(A_i)_{i\geq 1}$ are i.i.d.\ exponential r.v.\ with the rate $\delta_X$ (and similarly for $N^Y$,  $(T^Y_i)_{i\geq 1}$ and $(B_i)_{i\geq 1}$ associated with the process $Y$). We note from Expression~\eqref{eq:V in the compound poisson case} that as soon as $\rc_X\geq 0$ and $\rc_Y\geq 0$, the drift function is non-decreasing and thus ruin can only occur at a jump time of the counting processes $N^X$ and $N^Y$.

The Laplace exponent of $X$ is given by $\log\dE\SBRA{\re^{\alpha X(1)}} = \rc_X \alpha - \lambda_X \frac{\alpha}{\delta_X+\alpha}$, for $\alpha>-\delta_X$; we have a similar expression for the Laplace exponent of $Y$. As a consequence, for $\alpha < \min \left\{\delta_X/f_\mathrm{max}, \delta_Y/(1-f_\mathrm{min}) \right\}$,
\begin{multline*}
k_\alpha (t) = \alpha \left[ -\rc_X \int_0^t F(s) \mathrm{d}s - \rc_Y \int_0^t \PAR{1-F(s)} \mathrm{d}s \right. \\
\left. + \lambda_X \int_0^t \frac{F(s)}{\delta_X - \alpha F(s)} \mathrm{d}s + \lambda_Y \int_0^t \frac{1-F(s)}{\delta_Y - \alpha \PAR{1-F(s)}} \mathrm{d}s \right] .
\end{multline*}

In the deterministic SIS-dynamic, we compute for $\alpha<\overline{\alpha}$
\[
\ell_\alpha := \lim_{t \to \infty} \frac{k_\alpha (t)}{t} =\alpha \left[ -\PAR{\rho \rc_X + (1-\rho) \rc_Y }+ \frac{\rho \lambda_X}{\delta_X - \alpha \rho} + \frac{(1-\rho) \lambda_Y}{\delta_Y - \alpha (1-\rho)} \right],
\]
with $\rho=1-\frac{\gamma}{\beta}$, $\overline{\alpha}=\min\BRA{\frac{\delta_X}{\rho},\frac{\delta_Y}{1-\rho}}$ and the net profit condition is
\begin{equation}\label{eq:net profit condition for CPP}
\rho \dE \SBRA{X_1} + \PAR{1-\rho} \dE \SBRA{Y_1}=\rho\PAR{\rc_X-\frac{\lambda_X}{\delta_X}} +(1-\rho)\PAR{\rc_Y-\frac{\lambda_Y}{\delta_Y}}> 0 .
\end{equation}

We compare in Figure~\ref{fig:Ruin proba for the SIS CPP}, an empirical estimate of the ruin probability, via a Monte-Carlo method (with $1000$ iterations), the rough estimate $\alpha_0$ from Proposition~\ref{prop:estimation ruin probability for the collective model} and the rate $\alpha_*$ from Theorem~\ref{thm: exact ruin  for the collective model}, in a deterministic SIS environment (with $\beta=2,\,\gamma=1$) for different values of $F_0$ and for the following parameters: $\lambda_X=1$, $\delta_X=1$, $\rc_X=\rc_Y=3$, $\lambda_Y=2$ and $\delta_Y=1/2$. Since $F$ is monotone and converges to $\rho$, and from their  respective definitions in Proposition~\ref{prop:estimation ruin probability for the collective model} and in Theorem~\ref{thm: exact ruin  for the collective model}, $\alpha_0$ is the solution to 
\[
\alpha_0=\sup\BRA{\alpha>0:C_\alpha(F_0)<0 \text{ and } C_\alpha(\rho)<0} 
\]
and $\alpha_*=\sup\BRA{\alpha>0: C_\alpha(\rho)<0}$, 
with
\begin{align*}
C_\alpha(z)=&
-(z\rc_X +(1-z)\rc_Y)+ \frac{z \lambda_X}{\delta_X - \alpha z} + \frac{(1-z) \lambda_Y}{\delta_Y - \alpha (1-z)}.
\end{align*}
Solving the second-degree equation $C_\alpha(z)=0$ in $\alpha$ on the open interval $\PAR{0,\bar{\alpha}}$, we deduce that, under the net profit condition \eqref{eq:net profit condition for CPP} and the condition $F_0\dE[X_1]+(1-F_0)\dE[Y_1]>0$, 
\[
\alpha_0=\min(r(F_0),r(\rho))\quad \text{ and }\quad\alpha_*=r(\rho),
\]
with
\begin{align*}
r(z)=\frac{\rc(\delta_X(1-z)+\delta_Yz)-z(1-z)(\lambda_X+\lambda_Y)-\sqrt{\Delta}}{2\rc z(1-z)},
\end{align*}
$\rc=z\rc_X +(1-z)\rc_Y$, and 
\[
\Delta=\big[\rc(\delta_X(1-z)+\delta_Y z)-z(1-z)(\lambda_X+\lambda_Y)\big]^2-4\rc z(1-z)\big[\rc\delta_X\delta_Y-z\lambda_X\delta_Y-(1-z)\lambda_Y\delta_X\big].
\]
For the set of parameters used in Figure~\ref{fig:Ruin proba for the SIS CPP}, we compute  $\alpha_*\simeq 0.183$. In particular, we can see in Figure~\ref{fig:Ruin proba for the SIS CPP} how inaccurate the rough estimate can be in some cases.
\begin{figure}[h!]
    \centering
    \begin{tabular}{ccc}
    \includegraphics[scale=0.3]{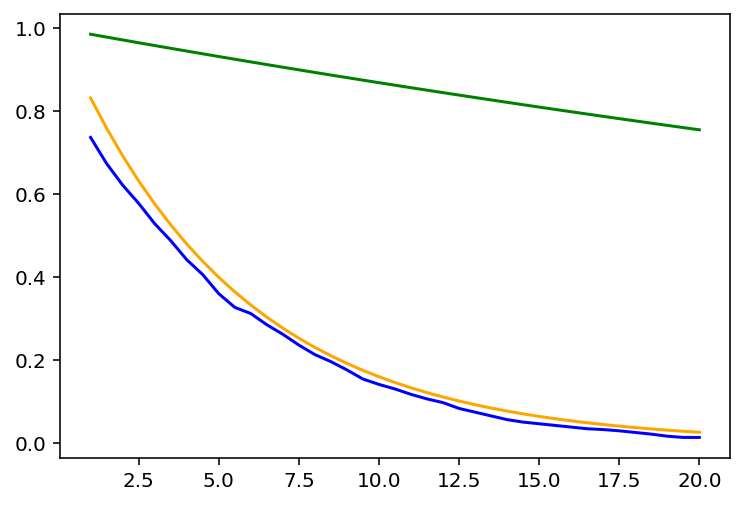}& \includegraphics[scale=0.3]{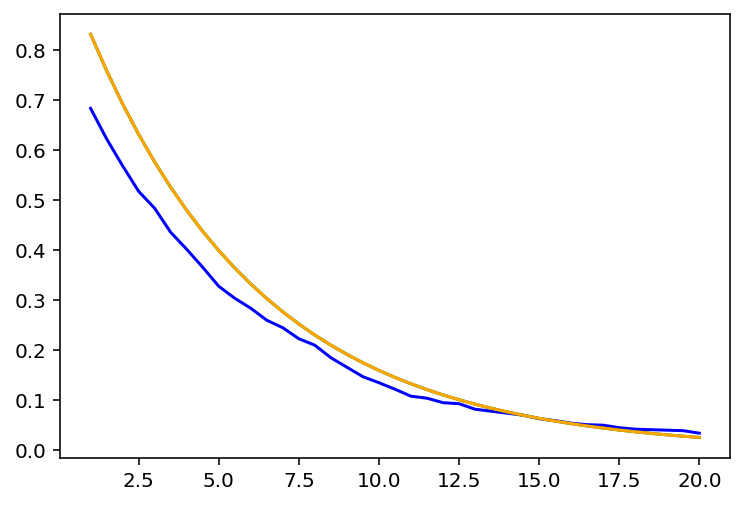}&
    \includegraphics[scale=0.3]{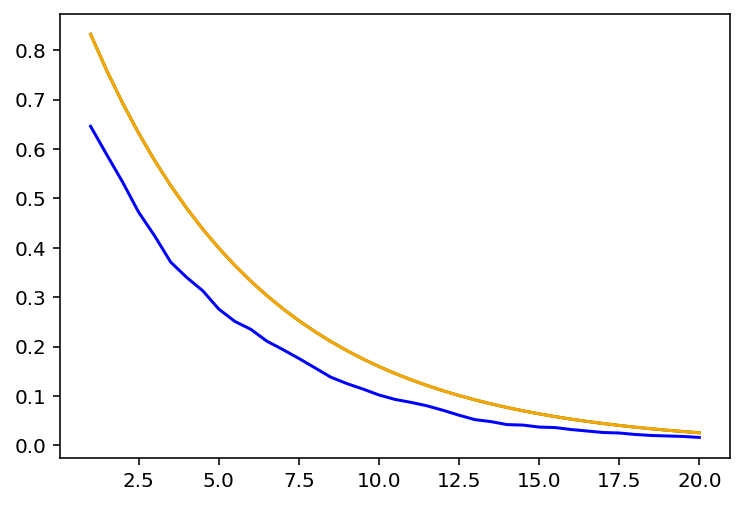}\\
    when $F_0=0.35<\rho$&
    when $F_0=0.5=\rho$ &
    when $F_0=0.7>\rho$ \\
     ($\alpha_*>0.014=\alpha_0$)&($\alpha_*=\alpha_0$)&
     ($\alpha_*=\alpha_0$)
    \end{tabular}
    \caption{Comparison of an estimate of the ruin probability before time $T=1000$ (in blue) with the curves $u\mapsto\re^{-\alpha_0 u}$ (in green) and $u\mapsto\re^{-\alpha_* u}$ (in orange) for the SIS-Compound Poisson model with an initial surplus $u\in[1,20]$ (on the horizontal axis). 
    }
    \label{fig:Ruin proba for the SIS CPP}
\end{figure}

\subsection{Individual model}\label{sec:application individual model}

We consider the individual model~\eqref{eq:def-W} and define the ruin probability by
\begin{equation}\label{eq:def ruin proba W}
q(u):=\dP\PAR{u+\inf_{t\geq 0}W(t)<0},
\end{equation}
when the insurance firm’s initial reserve is equal to $u$. The ruin time for the individual model is denoted by
\begin{equation}\label{eq:def ruin time W}
\zeta(u)=\inf\BRA{t\geq 0: u+\inf_{t\geq 0}W(t)<0}.
\end{equation}

In this subsection, we point out how the decay rate of $q(u)$ can be calculated in the case where $I(\cdot)$ is an irreducible continuous-time Markov process on the state space $\{0,\ldots,N\}$, for instance a birth-death process, and 
in which $X$ and $Y$ are either both spectrally negative or both spectrally positive L\'evy processes.

Let the transition rate matrix $Q$ underlying the process $I$ be such that the rates are denoted by $Q_{ij}$, with $Q_i:=-Q_{ii}=\sum_{j\not=i}Q_{ij}.$ In a SIS-epidemiological setting, we have  
\[
Q_{i,i+1}= \beta i (N-i)+\lambda,\quad Q_{i,i-1}= \gamma i .
\]
Note that the `external arrival' (immigration) rate $\lambda>0$ has been added to make sure the SIS process is not absorbed in state $0$. 
We denote by $\psi_X$ and $\psi_Y$ the Laplace exponents of $X$ and $Y$ respectively, i.e.,
    \[
    \dE\SBRA{\re^{\alpha X(t)}}=\re^{\psi_X(\alpha)t}\quad \text{and} \quad \dE\SBRA{\re^{\alpha Y(t)}}=\re^{\psi_Y(\alpha)t}.
    \]
In this setup, the process $W(\cdot)$ is a {\it Markov-additive process} (MAP); see e.g.\ \cite[Section III.5]{Asmussen-Albrecher10}.

\medskip
    
\paragraph{\it Spectrally negative L\'evy processes}

We assume that the L\'evy processes $X$ and $Y$ are light-tailed, in the sense  that the {\it Laplace exponents} of $X$ and $Y$, denoted by $\psi_X(\cdot)$ and $\psi_Y(\cdot)$, respectively are well-defined for arguments in an open neighborhood of $0$. 
By 
Lemma \ref{lem:characteristic function W}, we have that
\[
\xi_\alpha(t):=\frac{1}{t}\log \dE\SBRA{\re^{-\alpha W(t)}}={\psi_Y(-\alpha)N}+\frac{1}{t}\log \dE\SBRA{\exp\left( \big(\psi_X(-\alpha)-\psi_Y(-\alpha)\big)\int_0^tI(s)\rd s\right)}. \]
As for the collective model, the key assumption is that the equation
\[h_\alpha :=\lim_{t\to\infty} \xi_\alpha(t) = 0\]
 has a positive root $\alpha_*$, which is, under the net profit condition, necessarily unique due to the convexity of $\alpha\mapsto h_\alpha$. 
By   e.g.\ \cite{BotvichOConnell1995}, the decay rate of the ruin probability equals $-\alpha_*$ (which is a negative number):
\[\lim_{u\to\infty}\frac 1 u \log q(u)\to -\alpha_*.\]

It is not immediately clear how to compute $\xi_\alpha(\cdot)$ (and thus $\alpha_*$), but it turns out that there is a convenient approach to do so. 
\begin{itemize}
    \item[$\circ$]

Let $\Psi(\alpha)$ be the $(N+1)\times(N+1)$ diagonal matrix whose $i$-th diagonal entry equals $i\psi_X(-\alpha)+(N-i)\psi_Y(-\alpha)$, for $i=0,\ldots,N$. 
By the results of Van Kreveld {\it et al.}~\cite{vanKreveldMandjesDorsman2024}, $\alpha_*$ is the value of $\alpha$ with largest negative real part such that $\det(Q+\Psi(\alpha))=0$; note that in \cite{vanKreveldMandjesDorsman2022} the focus is on exceeding a {\it positive} level, so that all L\'evy processes considered there have to be multiplied by $-1$.

\item[$\circ$]Fix a reference state $i_0\in\{0,\ldots,N\}$; we will later argue that the choice of $i_0$ is immaterial. 
Let $\bM(\cdot)$ denote the moment generating function of the increment of $W(\cdot)$ between two successive returns of the modulating process $I(\cdot)$ to the state $i_0$. 
A conditioning argument shows that $\bM(\cdot)$ can be computed by solving the linear system
\begin{align*}
\bM(\alpha) 
&= \int_0^\infty Q_{i_0}\re^{-Q_{i_0}t}\,\re^{(i_0\psi_X(-\alpha)+(N-i_0)\psi_Y(-\alpha))t}\,\rd t 
\sum_{j\neq i_0}\frac{Q_{i_0 j}}{Q_{i_0}}\,\bM_j(\alpha),\\
\bM_i(\alpha) 
&= \int_0^\infty Q_i\re^{-Q_i t}\,\re^{(i\psi_X(-\alpha)+(N-i)\psi_Y(-\alpha))t}\,\rd t 
\left(\sum_{j\neq i,i_0}\frac{Q_{ij}}{Q_i}\,\bM_j(\alpha)+\frac{Q_{i i_0}}{Q_i}\right),
\end{align*}
where the second equation holds for any $i\neq i_0$. Here, $\bM_i(\alpha)$ denotes the moment generating function of the increment of $W(\cdot)$ when the modulating process starts in state $i$ and is observed until it hits $i_0$.
These equations simplify to
\begin{align}
\left(Q_{i_0}-i_0\psi_X(-\alpha)-(N-i_0)\psi_Y(-\alpha)\right)\bM(\alpha)
&= \sum_{j\neq i_0} Q_{i_0 j}\,\bM_j(\alpha), \label{d1}\\
\left(Q_i-i\psi_X(-\alpha)-(N-i)\psi_Y(-\alpha)\right)\bM_i(\alpha)
&= \sum_{j\neq i,i_0} Q_{ij}\,\bM_j(\alpha)+Q_{i i_0}, \label{d2}
\end{align}
where again the second equation applies for all $i\neq i_0$.

\item[$\circ$] The crucial insight is that, again following \cite{vanKreveldMandjesDorsman2024}, $\alpha_*$ is the (unique) positive solution of  $\bM(\alpha)=1$ (cf.\ the {\it Lundberg equation}). Hence we can compute $\bM(\alpha)$ for given $\alpha$ by solving the above linear equations; by bisection we can find $\alpha_*$ as the $\alpha$ such that $\bM(\alpha)=1$. It is readily checked that the choice of the reference state $i_0$ has no impact; observe that $ \bM(\alpha_*)$ is a right eigenvector of $Q+\Psi(\alpha_*)$, which was normalized such that $\bM_{i_0}(\alpha_*)=1$, and which is componentwise positive due to its interpretation in terms of moment generating functions. 
\end{itemize}

As it turns out, it is actually possible to find a sharper result. In \cite[Thm.\ 5.1]{vanKreveldMandjesDorsman2024}, it is pointed out how to compute the number $\theta\in(0,\infty)$ such that, as $u\to\infty$,
\[q(u) \,\re^{-\alpha_* u} \to \theta.\]
We do not provide the precise details underlying the computation of $\theta.$ Notably, the expressions provided in \cite{vanKreveldMandjesDorsman2024} do not considerably simplify if the process $I(\cdot)$ corresponds to a birth-death process (but obviously there is the reduced complexity of solving the linear system for fixed $\alpha$). We refer to Figure~\ref{fig:Ruin proba for the SIS BM} for a numerical illustration. 

\begin{figure}[h!]
    \centering
    \begin{tabular}{cc}
    \includegraphics[scale=0.45]{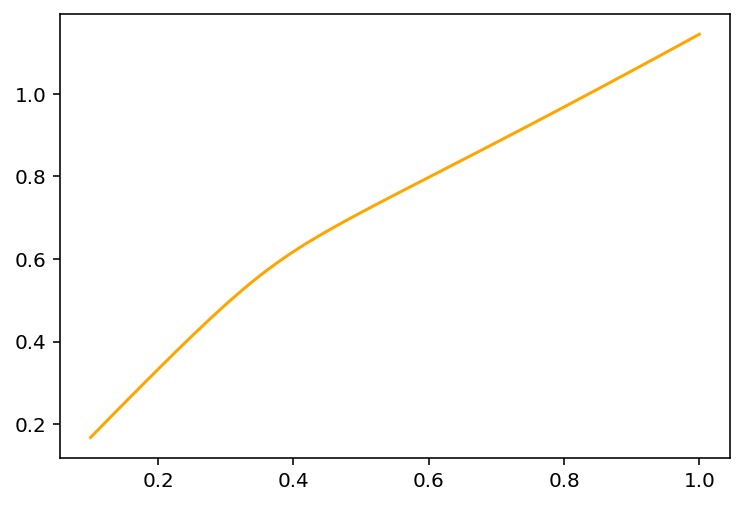}&
    \includegraphics[scale=0.45]{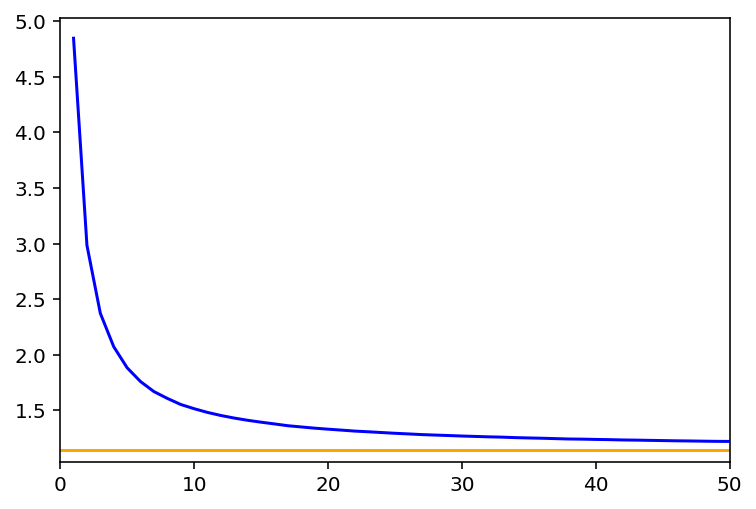}\\
    \end{tabular}
    \caption{Left: the function $\rc \mapsto \alpha^*(\rc)$ is considered in the individual model within the SIS epidemiological setting, where $X$ and $Y$ are Brownian motions with equal drift $\rc \in [0.1,1]$ and variances $\sigma_X^2 = 1$ and $\sigma_Y^2 = 2$, respectively. Right: Comparison of Monte Carlo-based estimates for $\log q(u)/u$ in the individual model with $\rc=1$, for initial surplus $u \in [1,50]$ (in blue), versus the asymptotic decay rate $\alpha^* \simeq 1.14418$ (in orange). The ruin probability estimates were obtained using importance sampling, as described in~\cite[Section 6]{vanKreveldMandjesDorsman2024}. The parameters used are $N=5$, $\beta=3$, $\gamma=2$, and $\lambda=1$.
    }
    \label{fig:Ruin proba for the SIS BM}
\end{figure}

\bigskip
\paragraph{\it Spectrally positive L\'evy processes}
We now consider the case that $X$ and $Y$ are spectrally positive L\'evy processes. This brings us in the setting of \cite[Section 4]{vanKreveldMandjesDorsman2022}.
In this case, we define $\Psi(\alpha)$ as the  $(N+1)\times(N+1)$-dimensional diagonal matrix whose $i$-th diagonal entry is $i\psi_X(\alpha) +(N-i)\psi_Y(\alpha)$ for $i=0,\ldots,N$, 
where $\psi_X$ and $\psi_Y$ are the Laplace exponents of $X$ and $Y$, respectively.  

Under the mild assumption that the $N+1$ roots of ${\rm det}(Q+\Psi(\alpha))$ with positive real parts are unique, one can compute $q(u)$ as follows. Denote these roots by $\nu_0, \ldots, \nu_N$. A crucial observation is that the first-passage process associated with $W(\cdot)$ in the negative direction (i.e., the process $(\zeta(u))_{u\geqslant 0}$ defined by~\eqref{eq:def ruin time W}), together with the state of the background process, forms a MAP — just like $W(\cdot)$ itself is a MAP.
As a consequence, $-\inf_{t\geqslant 0}W(t)$ has a phase-type distribution.
As pointed out in detail in \cite[Section 4]{vanKreveldMandjesDorsman2022}, for a $(N+1)\times (N+1)$ transition rate matrix $\Lambda$, a vector 
${\boldsymbol \lambda}:=-\Lambda{\boldsymbol 1}\geqslant {\boldsymbol 0}$ with at least one positive entry, and initial distribution ${\boldsymbol a}$, we have that 
\[{\mathbb E}\big[ \re^{\gamma \inf_{t\geqslant 0}W(t)}\big] = {\boldsymbol a}(\gamma I - \Lambda)^{-1}{\boldsymbol \lambda}.\]
It is then noted that the zeroes of ${\rm det}(Q+\Psi(\nu))$ coincide with those of ${\rm det}(- \nu I-\Lambda )$; 
cf.\ \cite[Thm.\ 4.7]{Ivanovs2011OnesidedMAP} and the first statement of \cite[Corollary 4.21]{Ivanovs2011OnesidedMAP}.
Because of this, the matrix $(\gamma I - \Lambda)$ is singular in $\gamma=-\nu_0,...,-\nu_N$, hence ${\mathbb E}[ \re^{\gamma \inf_{t\geqslant 0}W(t)}]$ can be written as a linear combination of the terms $1/(\gamma+\nu_0),...,1/(\gamma+\nu_N)$ via a partial fraction expansion.
This means that  we can write, for  $u\geqslant 0$,
\begin{equation}\label{dec}q(u) = \sum_{k=0}^N c_{k} \,e^{-\nu_k u}.\end{equation}
In \cite[Thms.\ 3 and 4]{vanKreveldMandjesDorsman2022} a recipe is presented for computing the coefficients $c_0,\ldots,c_N.$


\appendix

\renewcommand{\theequation}{\Alph{section}.\arabic{equation}}
\let \sappend=\section
\renewcommand{\section}{\setcounter{equation}{0}\sappend}

\section{Proof of Proposition~\ref{prop:equality in distribution of $W$}}\label{A:proof equality in distribution of $W$}

By Lemma~\ref{lem:characteristic function W}, we easily deduce that for any given $t\geqslant 0$, $W(t)\overset{\rm {\small d}}{=}U(t)$. To prove the equality of the processes in distribution, we need to prove the equality of the finite-dimensional probability distributions. Consider, for $n\in{\mathbb N}$, a vector $\PAR{t_1,\ldots, t_n}\in\dR_+^n$. Without loss of generality, we can assume that $t_1\leq \ldots\leq t_n$. Consequently, $J(t_1)\leq \ldots\leq J(t_n)$ a.s, where $J(t)=\int_0^tI(s)\,\rd s$.

Let $(\alpha_1,\ldots,\alpha_n)\in\dR^n$, and define $\beta_k:=\sum_{\ell=k}^n\alpha_\ell$. Introducing $t_0=0$, we have
\begin{align*}
\exp\left({i\sum_{k=1}^n\alpha_k U(t_k)}\right)&=\exp\left({i\sum_{k=1}^n\beta_k\, \PAR{U(t_k)-U(t_{k-1})}}\right),
\end{align*}
as $U(0)=X(0)+Y(0)=0$.
Recalling that $\cF^I$ is the $\sigma$-algebra generated by $I$, we have that, conditionally on this $\cF^I$, the random variables $\PAR{U(t_k)-U(t_{k-1})}_{1\leq k\leq n}$ are independent, and 
\[U(t_k)-U(t_{k-1})\overset{\rm {\small d}}{=}U\PAR{J(t_k)-J(t_{k-1})}.\] 
As a consequence,
\begin{align}
\dE\SBRA{\exp\left({i\sum_{k=1}^n\alpha_k U(t_k)}\right)\,\Big\vert\, \,\cF^I}&=\prod_{k=1}^{n}\dE\SBRA{\exp\left({i\beta_k \PAR{U(t_k)-U(t_{k-1})}}\right)\,\Big\vert \,\cF^I}\notag\\
&\hspace{-25mm}=\prod_{k=1}^n\exp\left({\varphi_X\PAR{\beta_k }\int_{t_{k-1}}^{t_k}I(s)\rd s+\varphi_Y\PAR{\beta_k}\int_{t_{k-1}}^{t_k} \PAR{N-I(s)}\rd s}\right).\label{eq:conditional characteristique U}
\end{align}
On the other hand, we have
\begin{align*}
\exp\left({i\sum_{k=1}^n\alpha_k W(t_k)}\right)=\exp\left({i\sum_{k=1}^n\beta_k \PAR{W(t_k)-W(t_{k-1})}}\right),
\end{align*}
where
$
W(t)= \sum_{j=1}^{N}W^j(t)$
with
\[
    W^j(t)=\int_0^t\ind_\BRA{\chi^j(s-)=\rA}\rd X^j(s)+\int_0^t\PAR{1-\ind_\BRA{\chi^j(s-)=\rA}}\rd Y^j(s).
\]
Recall that $X$ and $Y$ are independent Lévy processes, which are in addition  independent of $\chi$. With $\cF^\chi$ the $\sigma$-algebra generated by $\chi=\PAR{\chi^1,\ldots, \chi^N}$, recalling that the processes $(W^j)_{1\leq j\leq N}$ are independent conditionally on $\cF^\chi$, and the increments of the integrals are independent conditionally on $\cF^\chi$, we conclude
\begin{align}
&\dE\SBRA{\exp\left({i\sum_{k=1}^n\alpha_k W(t_k)}\right)\,\Big\vert \,\cF^\chi}\\&=\prod_{j=1}^N\prod_{k=1}^n\dE\SBRA{\exp\left({i\beta_k \PAR{\int_{t_{k-1}}^{t_k}\ind_\BRA{\chi^j(s^-)=\rA}\rd X^j(s)+\PAR{1-\ind_\BRA{\chi^j(s^-)=\rA}}\rd Y^j(s)}}\right)\,\Big\vert \,\cF^\chi}
\notag\\
&=\prod_{j=1}^N\prod_{k=1}^n\exp\left({\varphi_X(\beta_k) \int_{t_{k-1}}^{t_k}\ind_\BRA{\chi^j(s)=\rA}\rd s+\varphi_Y(\beta_k)\int_{t_{k-1}}^{t_k}\PAR{1-\ind_\BRA{\chi^j(s)=\rA}}\rd s}\right)
\notag\\
&=\prod_{k=1}^n\exp\left({\varphi_X(\beta_k) \int_{t_{k-1}}^{t_k}I(s)\,\rd s+\varphi_Y(\beta_k)\int_{t_{k-1}}^{t_k}\PAR{N-I(s)}\,\rd s}\right).\label{eq:conditional characteristique W}
\end{align}
Here we used in the above second line an argument similar to the one relied upon in the proof of  Lemma~\ref{lem:characteristic function V}, and  in the last line \[I_N(s)=\sum_{j=1}^N \ind_\BRA{\chi^j(s)=\rA}.\] The conclusion follows by taking the expectation in \eqref{eq:conditional characteristique U} and \eqref{eq:conditional characteristique W}.

\section{Proof of Lemma~\ref{lem:esp-variance}}
\label{B:proof means and variances}
To lighten the notations, we suppress the superscripts $\rc$ and $\ri$ in the proof. 
We compute the two first derivatives of the characteristic function of $V(t)$, given by~\eqref{eq:characteristic function V}, in $\alpha=0$. First, we have
\[
i\,\dE[V(t)]=\dE\SBRA{\int_0^t\PAR{\varphi_X'(0)F(s)+\varphi_Y'(0)(1-F(s))}\rd s},
\]
and we easily deduce the mean of $V(t)$.

Then, we have
\begin{multline*}
-\dE[V^2(t)]=\dE\SBRA{\int_0^t\PAR{\varphi_X''(0)F^2(s)+\varphi_Y''(0)\PAR{1-F(s)}^2}\rd s}\\
+\dE\SBRA{\PAR{\int_0^t\PAR{\varphi_X'(0)F(s)+\varphi_Y'(0)(1-F(s))}\rd s}^2}.
\end{multline*}
Consequently
\begin{align*}
\var(V(t))=&-\varphi_X''(0)\dE\SBRA{\int_0^t F^2(s)\rd s}-\varphi_Y''(0)\dE\SBRA{\int_0^t\PAR{1-F(s)}^2\rd s}\\
&-\PAR{\varphi'_X(0)}^2\,\var\PAR{\int_0^t F(s)\rd s}-\PAR{\varphi'_Y(0)}^2\,\var\PAR{\int_0^t(1- F(s))\rd s}\\
&-2\varphi'_X(0)\varphi'_Y(0)\,\cov\PAR{\int_0^t F(s)\rd s,\int_0^t (1-F(s))\rd s}\\
=&-\varphi_X''(0)\,\dE\SBRA{\int_0^t F^2(s)\rd s}-\varphi_Y''(0)\,\dE\SBRA{\int_0^t\PAR{1-F(s)}^2\rd s}\\
&-\PAR{\varphi'_X(0)-\varphi'_Y(0)}^2\,\var\PAR{\int_0^t F(s)\rd s}\\
=&\,\var(X(1))\, \dE\SBRA{\int_0^t F^2(s)\rd s}+\var(Y(1))\,\dE\SBRA{\int_0^t\PAR{1-F(s)}^2\rd s}\\
&+\PAR{\dE[X(1)]-\dE[Y(1)]}^2\,\var\PAR{\int_0^t F(s)\rd s},
\end{align*}
and the expression of $\var(V(t))$ follows.

  The mean and variance of $W(t)$ can be either computed from \eqref{eq:def-W} and the expressions of the mean and variance of $V$ applied to $W^j$, or  from the equality in distribution in Proposition~\ref{prop:equality in distribution of $W$} and  from Wald's equation for means and variances:
\begin{align*}
        \dE\SBRA{W(t)}&=\dE[X(1)]\dE\SBRA{J(t)}+\dE[Y(1)]\dE\SBRA{Nt-J(t)},\\
        \var{(W(t))}&=\var{(X(J(t)))}+2\,{\rm Cov}\left(X(J(t)),Y(Nt-J(t))\right)+\var{\left(Y(Nt-J(t))\right)},
        \end{align*}
where $J(t)=\int_0^tI(s)\rd s$ and
\begin{align*}   
\var\PAR{X(J(t))}&=\var(X(1))\dE\SBRA{J(t)}+\dE[X(1)]^2 \,\var{(J(t))},\\
\var\PAR{Y(Nt-J(t))}&=\var(Y(1))\dE\SBRA{Nt-J(t)}+\dE[Y(1)]^2 \,\var{(J(t))},
\end{align*}
and (using that $X$ and $Y$ are independent)
    \begin{align*}
        {\rm Cov}\left(X(J(t)),Y(Nt-J(t))\right)&=\dE[X(1)]\dE[Y(1)]\,{\rm Cov}\left(J(t),Nt-J(t)\right)\\
        &=-\dE[X(1)]\dE[Y(1)]\,{\rm Var}\left(J(t)\right).
    \end{align*}
    Upon combining the above, we obtain the result.

\section{Proof of Lemma~\ref{lem: Martingale}}\label{Appendix:martingale}

For each $\alpha\in(0,\overline{\alpha}]$, it is easy to conclude that $M_\alpha(t)=\re^{-\alpha V(t)-k_\alpha(t)}$ is well defined and $\cF_t$-measurable, for all $t \geq 0$. In fact, for any $t\geq 0$, as $F(t)\in[0,1]$ and $\re^{-\overline{\alpha} X(1)}$ is integrable, we deduce that $\re^{-\alpha F(t)X(1)}$ is also integrable. Consequently, since $\re^{\varphi_X(i\alpha F(t))}=\dE\SBRA{\re^{-\alpha F(t)X(1)}\,\vert\, \cF^F}$, $\varphi_X(i\alpha F(t))$ is well defined. Similarly, $\varphi_Y(i\alpha F(t))$ and $k_\alpha(t)$ (given by~\eqref{eq:k alpha}) are well defined, for each $\alpha\in(0,\overline{\alpha}]$ and each $t\geq 0$.

Note that $M_\alpha$ is càdlàg since $X,Y,F$ are themselves càdlàg by assumption. The integrability of $M_\alpha(t)$ is a consequence of the relationship $\re^{k_\alpha(t)}=\dE\SBRA{\re^{-\alpha V(t)}\,\vert\, \cF^F}$.

Now, take $s,t\geq 0$. We have                         
\begin{align}
&\dE\left[M_\alpha(t+s)\big|\mathcal{F}_t\vee \cF^F\right]\\&=\dE\left[\exp\left(-\alpha\int_{0+}^{t+s}F(u-)\rd X(u)-\alpha\int_{0+}^{t+s}(1-F(u-))\rd Y(u)\right)\big|\mathcal{F}_t\vee \cF^F\right]\re^{-k_\alpha(t+s)}
\notag\\
&=\dE\left[\exp\left(\int_{t}^{t+s} \varphi_X(i\alpha F(u))\rd u+\int_{t}^{t+s} \varphi_Y(i\alpha(1-F(u)))\rd u\right)\big|\mathcal{F}_t\vee \cF^F\right]\re^{-k_\alpha(t+s)}
\notag\\&\hspace{5cm}\times\exp\left(-\alpha\int_{0+}^{t}F(u-)\rd X(u)-\alpha\int_{0+}^{t}(1-F(u-))\rd Y(u)\right)
\notag\\
&=\exp\left(-\alpha\int_{0+}^{t}F(u-)\rd X(u)-\alpha\int_{0+}^{t}(1-F(u-))\rd Y(u)-k_{\alpha}(t)\right)
\notag\\
&=M_\alpha(t) \label{eq:martingale_equality} ,
\end{align}
where we made similar computations as in the proof of Lemma~\ref{lem:characteristic function V} but with conditional expectations. We deduce that $M_\alpha$ is an $\PAR{\mathcal{F}_t\vee \cF^F}_{t\geq 0}$-martingale. Taking expectations with respect to $\cF_t$, we obtain
\[
\dE\left[M_\alpha(t+s)\big|\mathcal{F}_t\right]=\dE\left[\dE\left[M_\alpha(t+s)\big|\mathcal{F}_t\vee \cF^F\right]\big|\mathcal{F}_t\right]=M_\alpha(t),
\]	
and thus $M_\alpha$ is also an $\PAR{\mathcal{F}_t}_{t\geq 0}$-martingale. 

Let $\tau$ be a $\PAR{\mathcal{F}_t\vee \cF^F}_{t\geq 0}$-stopping time bounded by a constant $k$.  By \eqref{eq:martingale_equality}, we note that $M_\alpha(t)=\dE\SBRA{M_\alpha(k)\,\vert\, \cF_t\vee \cF^F}$ for all $t\in[0,k]$. As $M_\alpha$ is a càdlàg $\PAR{\mathcal{F}_t\vee \cF^F}_{t\geq 0}$-martingale and $\tau$ is a bounded $\PAR{\mathcal{F}_t\vee \cF^F}_{t\geq 0}$-stopping time, by Doob's optional sampling theorem (see e.g.\ \cite[Theorem II.3.2]{revuz-yor99}), we have $M_\alpha(\tau)=\dE\SBRA{M_\alpha(k)\,\vert\, \cG_\tau}$, where
\[
\cG_\tau=\BRA{A\in\cG_\infty \colon \,  A\cap\BRA{\tau\leq t}\in\cG_t, \; \forall t\geq 0},
\]
in which $\cG_t = \mathcal{F}_t \vee \cF^F$ for all $t \geq 0$, and $\cG_\infty = \bigvee_{t\geq 0} \cG_t$. 
As $M_\alpha(0)=1$, taking conditional expectations with respect to $\cF^F$ in~\eqref{eq:martingale_equality} with $s=k$ and $t=0$, we easily observe that 
\[
\dE[M_\alpha(k)\,\vert\, \cF^F]=1 .
\]
As $\cF^F\subset \cG_\tau$, we deduce
\begin{align*}
\dE\SBRA{M_\alpha(\tau)\,\vert\, \cF^F}=\dE\SBRA{\dE\SBRA{M_\alpha(k)\,\vert\, \cG_\tau}\,\vert\, \cF^F}=\dE\SBRA{M_\alpha(k)\,\vert\, \cF^F}=1 .
\end{align*}

\small
\bibliographystyle{alpha}
\bibliography{biblio}

@article {AliliPatie10,
    AUTHOR = {Alili, Larbi and Patie, Pierre},
     TITLE = {Boundary-crossing identities for diffusions having the
              time-inversion property},
   JOURNAL = {J. Theoret. Probab.},
  FJOURNAL = {Journal of Theoretical Probability},
    VOLUME = {23},
      YEAR = {2010},
    NUMBER = {1},
     PAGES = {65--84},
      ISSN = {0894-9840,1572-9230},
   MRCLASS = {60J60 (60G18 60G40 60J65)},
       DOI = {10.1007/s10959-009-0245-3},
}

@article {AliliPatie14,
    AUTHOR = {Alili, Larbi and Patie, Pierre},
     TITLE = {Boundary crossing identities for {B}rownian motion and some
              nonlinear {ODE}'s},
   JOURNAL = {Proc. Amer. Math. Soc.},
  FJOURNAL = {Proceedings of the American Mathematical Society},
    VOLUME = {142},
      YEAR = {2014},
    NUMBER = {11},
     PAGES = {3811--3824},
      ISSN = {0002-9939,1088-6826},
   MRCLASS = {60J65 (34B24 35K05 60J50)},
  MRNUMBER = {3251722},
MRREVIEWER = {Zoran\ Vondra\v cek},
       DOI = {10.1090/S0002-9939-2014-12194-0},
       URL = {https://doi-org.proxy.bibliotheques.uqam.ca/10.1090/S0002-9939-2014-12194-0},
}

@book {Asmussen-Albrecher10,
    AUTHOR = {Asmussen, S{\o}ren and Albrecher, Hansj\"{o}rg},
     TITLE = {Ruin probabilities},
    SERIES = {Advanced Series on Statistical Science \& Applied Probability},
    VOLUME = {14},
   EDITION = {Second},
 PUBLISHER = {World Scientific Publishing Co. Pte. Ltd., Hackensack, NJ},
      YEAR = {2010},
     PAGES = {xviii+602},
      ISBN = {978-981-4282-52-9; 981-4282-52-9},
   MRCLASS = {60K15 (60H30 62P05 91B30)},
  MRNUMBER = {2766220},
MRREVIEWER = {Tina\ M.\ Marquardt},
       DOI = {10.1142/9789814282536},
       URL = {https://doi.org/10.1142/9789814282536},
}

@book{Pandemic-Insurance-book21,
author = {Boado Penas, Carmen and Eisenberg, Julia and Sahin, Sule},
year = {2021},
month = {11},
pages = {},
title = {Pandemics: Insurance and Social Protection},
isbn = {978-3-030-78336-5},
doi = {10.1007/978-3-030-78334-1}
}

@article{BotvichOConnell1995,
  author  = {Dmitri D. Botvich and Nicholas G. Duffield},
  title   = {Large deviations, the shape of the loss curve, and economies of large-scale multiplexers},
  journal = {Queueing Systems},
  volume  = {20},
  number  = {3},
  pages   = {293--320},
  year    = {1995},
  doi     = {10.1007/BF01149804}
}

@article {Chaumont-Pellas23,
    AUTHOR = {Chaumont, Lo\"ic and Pellas, Thomas},
     TITLE = {Creeping of {L}\'evy processes through curves},
   JOURNAL = {Electron. J. Probab.},
  FJOURNAL = {Electronic Journal of Probability},
    VOLUME = {28},
      YEAR = {2023},
     PAGES = {Paper No. 53, 25},
      ISSN = {1083-6489},
   MRCLASS = {60G51},
  MRNUMBER = {4574483},
       DOI = {10.1214/23-ejp942},
       URL = {https://doi-org.proxy.bibliotheques.uqam.ca/10.1214/23-ejp942},
}

@book{Dickson2017,
  author    = {David C. M. Dickson},
  title     = {Insurance Risk and Ruin},
  edition   = {2},
  publisher = {Cambridge University Press},
  address   = {Cambridge, UK},
  year      = {2017},
  isbn      = {9781107093480}
}

@article{DiekerMandjes2011,
  author    = {Ton Dieker and Michel Mandjes},
  title     = {Extremes of {M}arkov‐additive processes with one‐sided jumps, with queueing applications},
  journal   = {Methodology and Computing in Applied Probability},
  volume    = {13},
  number    = {2},
  pages     = {221--267},
  year      = {2011},
  doi       = {10.1007/s11009-009-9140-8}
}

@article {duffield-oconnell95,
    AUTHOR = {Duffield, Nicholas G. and O'Connell, Neil},
     TITLE = {Large deviations and overflow probabilities for the general
              single-server queue, with applications},
   JOURNAL = {Math. Proc. Cambridge Philos. Soc.},
  FJOURNAL = {Mathematical Proceedings of the Cambridge Philosophical
              Society},
    VOLUME = {118},
      YEAR = {1995},
    NUMBER = {2},
     PAGES = {363--374},
      ISSN = {0305-0041,1469-8064},
   MRCLASS = {60F10 (60K25 90B22)},
  MRNUMBER = {1341797},
MRREVIEWER = {J.\ Steinebach},
       DOI = {10.1017/S0305004100073709},
       URL = {https://doi-org.proxy.bibliotheques.uqam.ca/10.1017/S0305004100073709},
}

@article {Feng-Garrido11,
    AUTHOR = {Feng, Runhuan and Garrido, Jose},
     TITLE = {Actuarial applications of epidemiological models},
   JOURNAL = {N. Am. Actuar. J.},
  FJOURNAL = {North American Actuarial Journal},
    VOLUME = {15},
      YEAR = {2011},
    NUMBER = {1},
     PAGES = {112--136},
      ISSN = {1092-0277},
   MRCLASS = {91B30 (92D30)},
  MRNUMBER = {2792197},
       DOI = {10.1080/10920277.2011.10597612},
       URL = {https://doi.org/10.1080/10920277.2011.10597612},
}

@article {HerrmannTanre16,
    AUTHOR = {Herrmann, Samuel and Tanr\'e, Etienne},
     TITLE = {The first-passage time of the {B}rownian motion to a curved
              boundary: an algorithmic approach},
   JOURNAL = {SIAM J. Sci. Comput.},
  FJOURNAL = {SIAM Journal on Scientific Computing},
    VOLUME = {38},
      YEAR = {2016},
    NUMBER = {1},
     PAGES = {A196--A215},
      ISSN = {1064-8275,1095-7197},
       DOI = {10.1137/151006172},
}

@phdthesis{Ivanovs2011OnesidedMAP,
  author       = {Ivanovs, Jevgenijs},
  title        = {One‐sided Markov additive processes and related exit problems},
  school       = {Universiteit van Amsterdam},
  year         = {2011},
  address      = {Amsterdam, Netherlands},
  note         = {PhD thesis, Faculty of Science (FNWI), Korteweg-de Vries Institute for Mathematics (KdVI)},
  isbn         = {978-90-8891-311-2},
  url          = {https://hdl.handle.net/11245/1.360305}
}

@article{vanKreveldMandjesDorsman2024,
  author    = {Lucas van Kreveld and Michel Mandjes and Jan-Pieter Dorsman},
  title     = {Cram{\'e}r–{L}undberg asymptotics for spectrally positive {M}arkov additive processes},
  journal   = {Scandinavian Actuarial Journal},
  volume    = {2024},
  number    = {6},
  pages     = {561--582},
  year      = {2024},
  doi       = {10.1080/03461238.2023.2280287}
}

@article{vanKreveldMandjesDorsman2022,
  author       = {van Kreveld, Lucas and Mandjes, Michel and Dorsman, Jan-Pieter},
  title        = {Extreme Value Analysis for a {M}arkov Additive Process Driven by a Nonirreducible Background Chain},
  journal      = {Stochastic Systems},
  volume       = {12},
  number       = {3},
  pages        = {293--317},
  year         = {2022},
  doi          = {10.1287/stsy.2021.0086}
}

@book{Kyprianou2014,
  author    = {Andreas E. Kyprianou},
  title     = {Fluctuations of {L}{\'e}vy Processes with Applications: Introductory Lectures},
  edition   = {2},
  series    = {Universitext},
  year      = {2014},
  publisher = {Springer, Berlin},
  doi       = {10.1007/978-3-642-37632-0},
  url       = {https://link.springer.com/book/10.1007/978-3-642-37632-0}
}

@article {Ji-Zhang10,
    AUTHOR = {Ji, Lanpeng and Zhang, Chunsheng},
     TITLE = {The {G}erber-{S}hiu penalty functions for two classes of
              renewal risk processes},
   JOURNAL = {J. Comput. Appl. Math.},
  FJOURNAL = {Journal of Computational and Applied Mathematics},
    VOLUME = {233},
      YEAR = {2010},
    NUMBER = {10},
     PAGES = {2575--2589},
      ISSN = {0377-0427,1879-1778},
   MRCLASS = {91B30 (60J27)},
  MRNUMBER = {2577844},
       DOI = {10.1016/j.cam.2009.11.004},
       URL = {https://doi.org/10.1016/j.cam.2009.11.004},
}

@article {Lefevre-Simon-SIS22,
    AUTHOR = {Lef\`{e}vre, Claude and Simon, Matthieu},
     TITLE = {On the risk of ruin in a {SIS} type epidemic},
   JOURNAL = {Methodol. Comput. Appl. Probab.},
  FJOURNAL = {Methodology and Computing in Applied Probability},
    VOLUME = {24},
      YEAR = {2022},
    NUMBER = {2},
     PAGES = {939--961},
      ISSN = {1387-5841,1573-7713},
   MRCLASS = {91G05 (60J28 92D30)},
  MRNUMBER = {4446872},
       DOI = {10.1007/s11009-021-09924-z},
       URL = {https://doi.org/10.1007/s11009-021-09924-z},
}

@article {Lefevre-Simon-SIR21,
    AUTHOR = {Lef\`{e}vre, Claude and Simon, Matthieu},
     TITLE = {Ruin problems for epidemic insurance},
   JOURNAL = {Adv. in Appl. Probab.},
  FJOURNAL = {Advances in Applied Probability},
    VOLUME = {53},
      YEAR = {2021},
    NUMBER = {2},
     PAGES = {484--509},
      ISSN = {0001-8678,1475-6064},
   MRCLASS = {91G05 (60J28 92D30)},
  MRNUMBER = {4280455},
MRREVIEWER = {Xiaodong\ Bai},
       DOI = {10.1017/apr.2020.66},
       URL = {https://doi.org/10.1017/apr.2020.66},
}

@article {Li-Garrido05,
    AUTHOR = {Li, Shuanming and Garrido, Jos\'{e}},
     TITLE = {Ruin probabilities for two classes of risk processes},
   JOURNAL = {Astin Bull.},
  FJOURNAL = {Astin Bulletin. The Journal of the International Actuarial
              Association},
    VOLUME = {35},
      YEAR = {2005},
    NUMBER = {1},
     PAGES = {61--77},
      ISSN = {0515-0361},
   MRCLASS = {Expansion},
  MRNUMBER = {2142684},
       DOI = {10.2143/AST.35.1.583166},
       URL = {https://doi-org.proxy.bibliotheques.uqam.ca/10.2143/AST.35.1.583166},
}

@book {protter04,
    AUTHOR = {Protter, Philip E.},
     TITLE = {Stochastic integration and differential equations},
    SERIES = {Applications of Mathematics (New York)},
    VOLUME = {21},
   EDITION = {Second},
      NOTE = {Stochastic Modelling and Applied Probability},
 PUBLISHER = {Springer-Verlag, Berlin},
      YEAR = {2004},
     PAGES = {xiv+415},
      ISBN = {3-540-00313-4},
   MRCLASS = {60-02 (60G44 60H05 60H10 60H20)},
  MRNUMBER = {2020294},
MRREVIEWER = {Evelyn\ Buckwar},
}

@book{Schmidli2017,
  author    = {Hanspeter Schmidli},
  title     = {Risk Theory},
  series    = {Springer Actuarial Lecture Notes},
  year      = {2017},
  publisher = {Springer, Cham},
  doi       = {10.1007/978-3-319-72005-0},
  isbn      = {9783319720050}
}

@Book{revuz-yor99,
 Author = {Daniel {Revuz} and Marc {Yor}},
 Title = {{Continuous martingales and Brownian motion. 3rd ed}},
 FJournal = {{Grundlehren der Mathematischen Wissenschaften}},
 Journal = {{Grundlehren Math. Wiss.}},
 ISSN = {0072-7830; 2196-9701/e},
 Volume = {293},
 Edition = {3rd ed.},
 ISBN = {3-540-64325-7/hbk},
 Pages = {xiii + 602},
 Year = {1999},
 Publisher = {Berlin: Springer},
 Language = {English},
 MSC2010 = {60-02 60G44 60J65},
doi={10.1007/978-3-662-06400-9}
}

\end{document}